\definecolor{arrowblue}{RGB}{0,0,0}  % change colour of arrows in picture. I like black
\newtheorem{thm}{Theorem}[section]
\newtheorem*{thm*}{Theorem}
\newtheorem{lem}[thm]{Lemma}
\newtheorem{clm}{Claim}[thm]
\newtheorem{prop}[thm]{Proposition}
\newtheorem{cor}[thm]{Corollary}
\newtheorem{assumption}{Assumption}[section]
\theoremstyle{definition}
\newtheorem{rmq}[thm]{Remark}
\newtheorem{defn}[thm]{Definition}
\newcommand{\E}[2][]{\ensuremath{\mathbb{E}_{#1}\left[#2 \right]}}
\newcommand{\Prob}[2][]{\ensuremath{\mathbb{P}_{#1} \left(#2 \right)}}
\def\Var{\mathop{\rm Var}\nolimits}
\newcommand{\var}[2][]{\ensuremath{\Var_{#1} \left(#2 \right)}}
\newcommand{\eps}{\varepsilon}
\newcommand{\dd}{\mathrm d}
\DeclareMathOperator{\exponentialrv}{Exp}
\DeclareMathOperator{\geometricrv}{Geom}
\DeclareMathOperator{\mon}{Mon}
\DeclareMathOperator{\lead}{Lead}
\DeclareMathOperator{\slead}{SLead}
\DeclareMathOperator{\win}{Win}
\DeclareMathOperator{\disc}{disc.}
\DeclareMathOperator{\cont}{cont.}
\newcommand{\atomicset}[1]{\mathcal{A}_{#1}}
\newcommand{\atomicpart}[1]{\mu_{\disc}^{#1}}
\newcommand{\contpart}[1]{\mu_{\cont}^{#1}}
\newcommand{\meas}[1]{\mu^{#1}}
\newcommand{\Exp}[1]{\exponentialrv\left( #1 \right)}
\newcommand{\Geom}[1]{\geometricrv\left( #1 \right)}
\newcommand{\epstwo}{\eta}
\newcommand{\ssup}[1] {{\scriptscriptstyle{({#1}})}}
\title{Fixation of leadership in non-Markovian growth processes}
\author{T. Iyer\footnote{Weierstrass Institute for Applied Analysis and Stochastics, Mohrenstrasse 39, 10117 Berlin, Germany.}}
\date{August 21, 2024}
\begin{document}
\maketitle
\abstract{Consider a model where $N$ equal agents possess `values', belonging to $\mathbb{N}_0$, that are subject to incremental growth over time. More precisely, the values of the agents are represented by $N$ independent, increasing $\mathbb{N}_0$ valued processes with random, independent waiting times between jumps. We show that the event that a single agent possesses the maximum value for all sufficiently large values of time (called `leadership') occurs with probability zero or one, and provide necessary and sufficient conditions for this to occur. Under mild conditions we also provide criteria for a single agent to become the unique agent of maximum value for all sufficiently large times, and also conditions for the emergence of a unique agent having value that tends to infinity before `explosion' occurs (i.e. conditions for `strict leadership' or `monopoly' to occur almost surely). The novelty of this model lies in allowing non-exponentially distributed waiting times between jumps in value. In the particular case when waiting times are mixtures of exponential distributions, we improve a well-established result on the `balls in bins' model with feedback, removing the requirement that the feedback function be bounded from below and also allowing random feedback functions. As part of the proofs we derive necessary and sufficient conditions for the distribution of a convergent series of independent random variables to have an atom on the real line, a result which we believe may be of interest in its own right.}
\noindent  \bigskip
\\
{\bf Keywords:}  Growth processes, birth processes, balls in bins processes with feedback, generalised P\'olya urns, non-linear urns, atoms of random series, convergence of random series, reinforced processes. 
\\\\
{\bf AMS Subject Classification 2010:} 60G51, 60J10, 91B70. 
\section{Introduction}
\subsection{Background and motivation}
In various circumstances, it is of interest to model the dynamics of values of various agents subject to growth over time. As the values of agents grow it may be the case that they can leverage their value to grow at a faster rate, leading to reinforced growth and `rich-gets-richer' effects. In such a situation a natural question is whether or not a `leader' forms: a single agent that eventually obtains and retains maximal value for all sufficiently large times. 

Such a model is ubiquitous. One of the first, well-known, applications was to economics, where one might consider agents to be companies, with `values' representing wealth~\cite{path-dependence-economy, strong-laws-path-dependent}. In this case, `leadership' may indicate the formation of a company with dominant market share, or even the formation of `monopolies'~\cite{balls-in-bins-feedback-02}. In other applications, one may consider agents representing political policies, with `leadership' representing the formation of a most popular policy (called institutional `stickiness' in~\cite{politics}). In modelling neuron development, agents might also represent neurites, before the formation of a `leader', that is, the specification of an axon~\cite{khanin-neuron-polarity}. 

In previous applications, these dynamics have been modelled by `balls-in-bins' processes~\cite{balls-in-bins-feedback-02} (also known as non-linear urns~\cite{grosskinksy-generalised-urns-mono} or, originally, as generalised urn processes~\cite{strong-law-hill-sudderth, generalized-2}). In these processes, agents are represented by a fixed number of \emph{urns}, and the values of agents, encoded by the number of \emph{balls} the associated urn contains. The rate of `growth' is measured by a feedback function $f\colon\mathbb{N}_{0} \rightarrow (0,\infty)$. Then starting from a given initial condition, at each discrete time-step, one selects an urn containing $j$ balls (say) with probability proportional to $f(j)$, and adds a ball to it. 

This model generalises the classical P\'olya-Eggenberger urn scheme~\cite{polya-urn-original} -- the case $f(j) = j$ -- and thus was first introduced under the name `generalised urns' in~\cite{strong-law-hill-sudderth}. It was later re-introduced under the name `balls-in-bins' in~\cite{balls-in-bins-feedback-02}, however, we use the latter terminology to avoid confusion with other generalisations of the P\'olya-Eggenberger urn scheme (e.g.~\cite{janson-urns}). %One can incorporate a `rich-gets-richer' effect generally, by assuming that $f(j)$ is non-decreasing, so that urns with more balls are more likely obtain more balls in the future.   

In these processes, \emph{strict leadership} (known as `eventual leadership' in~\cite{oliveira-brownian-motion}) occurs if there is a unique urn containing the maximum number of balls for all but finitely many time-steps. On the other hand \emph{monopoly} (a term coined in~\cite{balls-in-bins-feedback-02}), occurs if for all but finitely many time-steps, a single urn is chosen for new additions of balls. A well-known result, as stated in~\cite{oliveira-brownian-motion}, is the following:

\begin{thm*}[From~\cite{balls-in-bins-feedback-02, khanin-neuron-polarity, scaling-result-explosive-processes, oliveira-onset-of-dominance,oliveira-thesis, oliveira-brownian-motion}]
    In the balls-in-bins process, with feedback function $f\colon\mathbb{N}_{0} \rightarrow (0, \infty)$, regardless of the initial conditions of the process:
    \begin{itemize}
        \item Monopoly occurs with probability zero or one. Moreover, monopoly occurs with probability one if and only if $\sum_{j=1}^{\infty} 1/f(j) < \infty$. 
        \item If the feedback function $f$ is bounded from below, strict leadership occurs with probability zero or one. Moreover, strict leadership occurs with probability one if and only if $\sum_{j=1}^{\infty} 1/f(j)^2 < \infty$. 
    \end{itemize}
\end{thm*}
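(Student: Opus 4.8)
The plan is to prove the two parts via an embedding of the balls-in-bins process into continuous time, reducing the problem to one about independent birth processes, and then appealing to the general results on non-Markovian growth processes established earlier in the paper. First I would recall the classical Athreya--Karlin / Rubin embedding: attach to each urn an independent pure birth process whose $j$-th holding time is exponentially distributed with rate $f(j)$; if $T_i(n)$ denotes the time at which urn $i$ acquires its $n$-th ball, then the discrete-time process obtained by ordering the jump times of all $N$ birth processes has exactly the law of the balls-in-bins process with feedback $f$. Crucially, this embedding preserves the relevant events: the discrete-time urn with the most balls at large times corresponds to the continuous-time birth process that is \emph{ahead}, and `monopoly' in discrete time corresponds to the event that, eventually, only one birth process ever jumps again, i.e.\ one agent's value tends to infinity while every other agent's value is frozen --- which is exactly `strict leadership'/`monopoly' in the continuous-time language of this paper (with the explosion time playing the role of the time horizon).

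Next I would identify the law of the waiting times in the embedded process and check that the hypotheses of the general theorems apply. Here each agent is a birth process with independent holding times $W_j \sim \Exp{f(j)}$; these are (trivially) mixtures of exponentials, so the refined continuous-time results --- which in this paper are proved precisely for the mixture-of-exponentials case --- are available without the extra `bounded below' hypothesis. The explosion time of a single agent is $\sum_{j\ge 0} W_j$, which is a convergent series of independent exponentials; by the paper's atom criterion this series is a.s.\ finite (`explosion occurs') iff $\sum_j 1/f(j) < \infty$, and this is the condition under which one agent can run away to infinity before the others move. Translating back through the embedding gives: monopoly occurs a.s.\ iff $\sum_j 1/f(j) < \infty$, and occurs with probability zero otherwise; the zero--one law is inherited from the corresponding zero--one law for the continuous-time model.

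For the second bullet, strict leadership in discrete time unwinds, under the embedding, to the statement that a single agent has the maximal value for all sufficiently large times --- the `leadership' event of the paper, as opposed to `strict leadership'/`monopoly'. The general leadership criterion from the body of the paper, specialised to holding times $W_j\sim\Exp{f(j)}$, should read that leadership occurs a.s.\ iff a certain series built from the variances (or second moments) of the $W_j$ converges, and $\Var(W_j) = 1/f(j)^2$, producing the stated condition $\sum_j 1/f(j)^2 < \infty$. The role of the `$f$ bounded below' assumption is exactly to guarantee the mild regularity hypothesis needed for that leadership criterion (it prevents the holding-time distributions from degenerating in a way that would spoil the comparison between the running maxima of two agents); under it the dichotomy and the criterion both transfer verbatim.

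The main obstacle I anticipate is the translation dictionary between the two models' events, not the probabilistic estimates. One has to be careful that `the discrete-time urn of maximum ball-count eventually stabilises' really is the same event as `one continuous-time birth process is eventually ahead', keeping track of ties and of the (measure-zero) possibility of simultaneous jumps, and that `monopoly' lines up with `some agent explodes first while the rest are frozen' rather than with mere leadership; getting these equivalences exactly right, and confirming that the mixture-of-exponentials hypotheses in the general theorems are met with no hidden lower-bound requirement on $f$, is where the real work lies. Once the dictionary is pinned down, both bullets follow by citing the general zero--one law and the two convergence criteria ($\sum 1/f(j)<\infty$ for explosion/monopoly, $\sum 1/f(j)^2<\infty$ for leadership) proved earlier.
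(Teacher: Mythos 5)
Your overall strategy --- embed into continuous time with $\Exp{f(j)}$ holding times and then invoke the general criteria for the competing growth process --- is exactly the route the paper takes (Proposition~\ref{prop:cont-equivalence} followed by Theorem~\ref{thm:leadership-classification} and Corollary~\ref{cor:strict-leadership}). However, your translation dictionary between events is wrong in a way that leaves a real gap. You identify the second bullet's event with the paper's \emph{leadership} event and, elsewhere, conflate strict leadership with monopoly. In the paper's terminology the classical ``eventual leadership'' is precisely \emph{strict} leadership: a \emph{unique} urn with maximal count for all but finitely many steps. The convergence of the symmetric series $\sum_j (X_j - X'_j)$ (whose truncated variance is of order $\sum_j 1/f(j)^2$, as you note) only yields \emph{leadership}; it does not rule out two urns tying infinitely often. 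To upgrade to strict leadership one needs the additional Borel--Cantelli tail condition of Corollary~\ref{cor:strict-leadership}, namely $\sum_j \Prob{X_j > \eps} < \infty$ for every $\eps>0$, together with the no-atom statement (Theorem~\ref{thm:no-atom}) to exclude the partial sums converging exactly to zero. In the exponential case this is verified via $\Prob{X_j > \eps} = e^{-f(j-1)\eps} \le (f(j-1)\eps)^{-2}$, which is summable precisely when $\sum_j 1/f(j)^2 < \infty$; your proposal omits this step entirely, and it is not a formality --- Proposition~\ref{prop:counter} shows that without such a condition strict leadership need not be a zero--one event.

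Two smaller but genuine misattributions: first, the finiteness of the explosion time $\sum_j W_j$ iff $\sum_j 1/f(j) < \infty$ does not come from ``the paper's atom criterion''; it comes from the expectation computation in one direction and the Laplace-transform bound $\prod_j f(j)/(f(j)+1) = 0$ in the other. The atom criterion is needed for a different purpose in the monopoly proof: to show that two agents almost surely do not explode at the same instant, so that the first explosion is achieved by a \emph{unique} agent. Second, the necessity direction of the second bullet (divergence of $\sum 1/f(j)^2$ implies no strict leadership) does not ``transfer verbatim'': it requires showing that the divergent symmetric series has partial sums oscillating between $+\infty$ and $-\infty$ (Proposition~\ref{prop:fluctuating-sum}), so that each pair of urns overtakes the other infinitely often and even leadership fails.
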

The first result concerning monopoly comes from Rubin's argument (presented in~\cite{davis-rubin}). On the other hand, the result regarding strict leadership was first proved in the case $f(j) = (j+1)^{p}$ in~\cite{khanin-neuron-polarity}, and generalised by subsequent works of Oliveira and Spencer~\cite{oliveira-thesis, oliveira-brownian-motion, oliveira-2005-avoiding-defeat-balls-in-bins-process}. Thus, if $f(j) = (j+1)^{p}$, for $p \in (0, \infty)$ - corresponding to instance of the model studied in, for example,~\cite{chung-generalisations-of-polya-urn, balls-in-bins-feedback-02, khanin-neuron-polarity} - the model exhibits \emph{phase-transitions} at the values $p = 1$ and $p = 1/2$ respectively, corresponding to whether or not monopoly, or strict leadership occur. 

It is important to note that the conditions on summability of $\sum_{j=0}^{\infty} 1/f(j)$ and $\sum_{j=0}^{\infty} 1/f(j)^2$ appear more widely in other models of reinforcement, where $f$, informally,  represents the degree of reinforcement. The condition of summability of $\sum_{j=0}^{\infty} 1/f(j)$, appears as a condition for `explosion' in first-passage percolation in trees~\cite{explosion-first-passage}, in `fixation' of edge-reinforced random walks~\cite{davis-rubin, sellke-reinforced-random-walk, pemantle-review}, and in `connectivity transitions' of growing trees of the `generalised preferential attachment' type~\cite{oliveira-spencer-conn-transitions, holmgren-janson, inhom-sup-pref-attach}. On the other hand, the condition of summability of $\sum_{j=0}^{\infty} 1/f(j)^2$ also arise in the context of growing generalised preferential attachment graphs; in criteria for the emergence of the emergence of a `persistent hub'. In these latter models, a persistent hub represents a node in the graph whose degree is the largest for all but finitely many time-steps in the evolution of the network~\cite{dereich-morters-persistence, galashin, banerjee-bhamidi}. 

A common approach to analyse the balls-in-bins model, and  other similar models is via a continuous time embedding. This approach dates back to Athreya and Karlin in a related model~\cite{arthreya-karlin-embedding-68}, but was first exploited in the context of the balls-in-bins model by Rubin~\cite{davis-rubin}. One considers the collection of urns as a continuous time Markov process, with the number of balls in a particular urn increasing from $j$ to $j+1$ independently at rate $f(j)$. The embedded Markov chain recovers the original balls-in-bins model. The continuous time representation immediately gives meaning to condition on the summability of $\sum_{j=0}^{\infty} 1/f(j)$, as a necessary and sufficient condition for explosion of the pure-birth process associated with the number of balls in a particular urn. 

There is a large literature related to the balls-in-bins model, concerning properties such as the limiting proportion of balls in certain urn~\cite{strong-law-hill-sudderth, generalized-2, strong-laws-path-dependent, grosskinksy-generalised-urns-mono, grosskinsky-wages-capital-returns-generalized}, the emergence of `weak monopoly', that is, an urn whose limiting `market share' tends to one~\cite{grosskinksy-generalised-urns-mono}, and the number of balls of `losing type' when monopoly occurs~\cite{grosskinsky-tails-explosive-birth-processes}. Other results are concerned with properties such as the probability of monopoly, or leadership by an urn when one varies the initial conditions of the urns~\cite{scaling-result-explosive-processes, oliveira-2005-avoiding-defeat-balls-in-bins-process, oliveira-brownian-motion, oliveira-thesis}. A number of generalisations of the model have also been studied, including, models with varied feedback functions across different urns~\cite{grosskinksy-generalised-urns-mono, balls-in-bins-asymmetric-feedback}, models where the number of balls replaced is asymmetric~\cite{grosskinsky-wages-capital-returns-generalized}, time-dependent models, with random numbers of balls added at each time-step~\cite{pemantle-time-dependent, sidorova-2018-timedependent-balls-bins-model}, and interacting urn models~\cite{interacting-urns-nonlinear-types, launay-2012-interacting-urn-models, qin-2023-interacting-urn-models-strong}. We also remark that there is a large literature on urns that generalise the P\'olya-Eggenberger urn scheme in other ways, for example~\cite{arthreya-karlin-embedding-68, janson-urns, infinite-colour-urns, infinite-colour-urns-2}. 

However, one limitation of the balls-in-bins model, and its extensions in the literature is that they are Markov processes. Moreover, in many contexts, one might regard the continuous time embedding of this model as more realistic in modelling the growth of values of agents: if values represent the wealth of companies, the times when `growth' take place often occurs in continuous time, rather than in discrete time-steps. However, in these applications it is not desirable to require exponentially distributed waiting times between jumps, as this may not always be realistic. %If values represent the number of cells of a particular type, for example, in modelling waiting times for cell division, one often uses more general distributions (see, for example,~\cite{GOLUBEV2016203, cell-cyle}). 
This motivates the present study: the analysis of more general non-Markovian processes which generalise the balls-in-bins model, where the waiting times in the underlying continuous time process are independent but not necessarily exponentially distributed. 

\subsection{Contributions of this paper and overview}
The contributions of this paper are the following: 
\begin{itemize}
    \item In Theorem~\ref{thm:leadership-classification} we derive necessary and sufficient conditions for leadership to occur with probability zero or one in these more general competing growth processes, in other words, conditions for a single agent to have maximum value for all but finitely many time-steps. Moreover, in Theorem~\ref{thm:leadership-classification} and Corollary~\ref{cor:strict-leadership} we provide sufficient criteria for monopoly and strict leadership to occur with probability zero or with probability one. Unlike leadership, the latter two properties are not zero-one events in general, as shown in Remark~\ref{rem:non-zero-one}.
    \item In Theorem~\ref{thm:no-atom} we prove an important auxiliary result which may be of independent interest:  necessary and sufficient criteria for the distribution of a convergent sum of independent random variables to have no atom on the real-line. Another auxiliary result, Proposition~\ref{prop:fluctuating-sum}, shows that when a series of independent random variables fails to converge, and the summands are symmetric, the partial sums associated with the series cross the origin infinitely often. 
    \item The results from Theorem~\ref{thm:leadership-classification} shed new light on the summability of $\sum_{j=0}^{\infty} 1/f(j)^2$ in regards to the balls-in-bins process scheme, showing that this condition arises when analysing the convergence of certain random series in Theorem~\ref{thm:leadership-classification}. In particular, applying Theorem~\ref{thm:leadership-classification} leads to a new result regarding strict leadership in balls-in-bins schemes in Theorem~\ref{thm:balls-in-bins},
    allowing for the feedback function to be random, and showing that it need not be bounded from below.
\end{itemize}
%These results shed new light on the summability of $\sum_{j=0}^{\infty} 1/f(j)$ and $\sum_{j=0}^{\infty} 1/f(j)^2$ in regards to the non-linear urn scheme, showing that these conditions arise as necessary and sufficient conditions for certain sums of independent random variables (from Theorem~\ref{thm:leadership-classification}) to converge. In particular, applying Theorem~\ref{thm:leadership-classification} to non-linear urn schemes leads to a new result for leadership to occur,  (see Theorem~\ref{thm:balls-in-bins}). 
Section~\ref{section:desc} contains the formal description of the model and the main results related to the model: Theorem~\ref{thm:leadership-classification}, Corollary~\ref{cor:strict-leadership} and Proposition~\ref{prop:counter}. It also contains auxiliary results on random series that may be of independent interest: Theorem~\ref{thm:no-atom} and Proposition~\ref{prop:fluctuating-sum}. Section~\ref{sec:balls-in-bins} deals with applications to the `balls-in-bins' scheme, providing a formal overview of that model, and the main result in Theorem~\ref{thm:balls-in-bins}. Section~\ref{sec:proofs} includes the proofs of the results appearing in this article, with sub-sections that can generally (aside from a few global definitions) be read independently of each other. %Theorem~\ref{thm:leadership-classification}, Corollary~\ref{cor:strict-leadership} and Proposition~\ref{prop:counter} are proved in Section~\ref{sec:main-model}, Theorem~\ref{thm:no-atom} is proved in Section~\ref{series-1}, Proposition~\ref{prop:fluctuating-sum} is proved in Section~\ref{series-2} and finally, Theorem~\ref{thm:balls-in-bins} is proved in Section~\ref{sec:balls-in-bins}.

%The connection to convergence of random series may provide an insight into provides an insight into the condition $\sum_{j=1}^{\infty} \frac{1}{f(j)^2} < \infty$ for leadership in the balls-in-bins model, and perhaps may yield an insight into its origin in closely related models, such as preferential attachment.  

\subsection{Model description and main results} \label{section:desc}
We consider a finite family of $\mathbb{N}_{0}$ valued growth processes with independent waiting times between jumps. Suppose we have $A \geq 2$ \emph{agents} labelled by the elements of $[A] := \left\{1, \ldots, A\right\}$. To each $a \in [A]$, we associate an identically distributed sequence of mutually independent random variables $(X^{\ssup{a}}_{j})_{j \in \mathbb{N}}$, taking values in $[0, \infty)$. Each agent $a \in [A]$ has a \emph{value} $v_{a}\colon [0, \infty) \rightarrow \mathbb{N}$ such that, the value of $a$ at time $t$, $v_a(t)$, increases over time. The quantity $X^{\ssup{a}}_{k}$ is the time taken for the value of $a$ to increase from $k-1$ to $k$, and the agent $a$ begins with a value of $v_{a}(0)$. Thus, given the \emph{initial value} $v_{a}(0)$, for $k \in \mathbb{N}_{0}$ we have 
\[
v_a(t) = v_{a}(0) + k \quad \text{if and only if} \quad \sum_{j=v_{a}(0) + 1}^{v_{a}(0) + k} X^{\ssup{a}}_{j} \leq t < \sum_{j=v_{a}(0) + 1}^{v_{a}(0) + k+1} X^{\ssup{a}}_{j}.
%\sum_{k=v_{a}(0)}^{\infty} \mathbf{1}_{t \geq \sum_{j=v_{a}(0)}^{k} X^{\ssup{a}}_{j}}. 
\]  
We are interested in the vector of values of agents as time evolves, until, possibly, an agent reaches infinite value. To do so, we set $\tau_{0} := 0$. Then for each $n \in \mathbb{N}$, we define 
\begin{equation} \label{eq:stop-times}
\tau_{n} := \inf{\left\{t \geq 0\colon\sum_{a =1}^{A} v_{a}(t) \geq n \right\}}. 
\end{equation}
We are generally interested in the discrete process $(v_{a}(\tau_{n}))_{a \in [A], n \in \mathbb{N}_{0}}$, which represents the evolution of values in the system as the total sum of values of agents increases. We call this a \emph{competing growth process}. 

\begin{rmq}
The main reason for analysing the process $(v_{a}(\tau_{n}))_{a \in [A], n \in \mathbb{N}_{0}}$, and not the continuous time process $(v_a(t))_{a \in [A], t \in [0, \infty)}$, is that we may have $\lim_{n \to \infty} \tau_{n} < \infty$, i.e., explosion may occur. In many applications, there may be a total cap on the total possible sum of  `values'. For example, if the agents represent companies, there may be a cap on the total possible wealth that may exist. Thus, more relevant is the behaviour of the model before `resources' run out -- i.e., the behaviour before explosion. 
\end{rmq}

\begin{rmq}
   As far as we are aware, the general process $(v_{a}(\tau_{n}))_{a \in [A], n \in \mathbb{N}_{0}}$ has not been studied in the literature before. However, this process generalises the balls-in-bin process in a similar manner to the way trees associated with \emph{Crump-Mode-Jagers} branching processes generalise `preferential attachment trees' - see, for example,~\cite{holmgren-janson}. There is a large amount of literature concerning Crump-Mode-Jagers branching processes.
\end{rmq}

%To avoid confusion with the value $t$ underlying the continuous time process, we refer to `times' in this discrete process as \emph{time-steps}. 
\begin{defn} \label{def:leadership-class}
    With regards to the process $(v_{a}(\tau_{n}))_{a \in [A], n \in \mathbb{N}_{0}}$:
    \begin{itemize}
        \item We say \emph{leadership} occurs if for some $a \in [A]$, we have $\max_{a' \in [A]} v_{a'}(\tau_{n}) = v_{a}(\tau_{n})$ for all but finitely many $n$.
        \item We say \emph{strict leadership} occurs, if for some \emph{unique} $a \in A$, $\max_{a' \in [A]} v_{a'}(\tau_{n}) = v_{a}(\tau_{n})$ for all but finitely many $n$. 
        \item Finally, we say \emph{monopoly} occurs if there is a unique $a \in A$ such that $\lim_{n \to \infty} v_{a}(\tau_{n}) = \infty$.
    \end{itemize}
\end{defn}
\noindent Clearly the occurence of monopoly implies strict leadership, which in turn implies leadership. 

The results in this paper involve criteria related to convergence of random series. Recall that, given a sequence of mutually independent random variables $(S_{j})_{j \in \mathbb{N}}$, by the Kolmogorov $0$-$1$ law, the series $\sum_{j=1}^{\infty}S_{j}$ either converges almost surely or diverges almost surely. Necessary and sufficient conditions for this convergence are provided by the classical Kolmogorov three series theorem - see Section~\ref{sec:aux}. Note also, that divergence of a series $\sum_{j=1}^{\infty} X_{j}$ of non-negative terms is equivalent to $\sum_{j=1}^{\infty} X_{j} = \infty$.

The first main result is a general classification of leadership in the process $(v_{a}(\tau_{n}))_{a \in [A], n \in \mathbb{N}_{0}}$. 
Suppose that $(X_{j})_{j \in \mathbb{N}}, (X'_{j})_{j \in \mathbb{N}}$ denote generic, independent sequences with $(X_{j})_{j \in \mathbb{N}} \sim (X'_{j})_{j \in \mathbb{N}} \sim (X^{\ssup{1}}_{j})_{j \in \mathbb{N}}$.

\begin{thm} \label{thm:leadership-classification}
    In the process $(v_{a}(\tau_{n}))_{a \in [A], n \in \mathbb{N}_{0}}$, we have the following:
    \begin{enumerate}
        \item \label{item:leadership} Leadership occurs with probability zero or one. Moreover, leadership occurs with probability one if and only if the random series $\sum_{j=1}^{\infty} (X_{j} - X'_{j})$ converges almost surely.
        \item \label{item:monopoly} If $\sum_{j=1}^{\infty} X_{j}$ diverges almost surely, then monopoly occurs with probability zero. On the other hand, if $\sum_{j=1}^{\infty} X_{j}$ converges almost surely, and either
        \begin{enumerate}
            \item \label{item:no-atom} For any $a \neq a' \in [A]$, $\exists j > v_{a}(0) \wedge v_{a'}(0)$ such that $X_{j}$ has no atom on $[0, \infty)$, or,
            \item \label{item:no-det-sequence} For any sequence $(c_{j})_{j \in \mathbb{N}} \in [0, \infty)^\mathbb{N}$ , we have $\sum_{j=1}^{\infty} \Prob{X_{j} \neq c_{j}} = \infty$,
        \end{enumerate}
       % \begin{linenomath}
       % \begin{align} \label{eq:app-no-atom-cond}
        %& \nonumber \sum_{j=1}^{\infty} \Prob{X_{j} - X'_{j} \neq 0} = \infty, \text{ or,}
         %\\ & \hspace{1cm} \text{for $a \neq a' \in [A]$, $\exists j > v_{a}(0) \wedge v_{a'}(0)$ such that }   \Prob{X_{j} \neq X'_{j}} = 1,
        %\end{align}
        %\end{linenomath}
        then monopoly occurs with probability one.
        \item \label{item:strict-leadership} If the series $\sum_{j=1}^{\infty} (X_{j} - X'_{j})$ converges almost surely, and, for any $a \neq a' \in [A]$
        \begin{equation} \label{eq:borel-cantelli}
        \sum_{k=1}^{\infty} \Prob{0 \leq \sum_{j=v_{a'}(0)+1}^{k} (X'_{j} - X_{j}) - \sum_{j= v_{a}(0)+1}^{v_{a'}(0)} X_{j} < X_{k+1}} < \infty,
        \end{equation}
        strict leadership occurs with probability one.
    \end{enumerate}
\end{thm}

\begin{rmq} \label{rem:novelty}
    Convergence of random series have been used in proving sufficiency of $\sum_{j=0}^{\infty} 1/f(j)^2$ for leadership in the balls-in-bins model (see, for example~\cite{khanin-neuron-polarity, oliveira-brownian-motion}). The main technicality of this article lies in extending these criteria to the general setting, using Theorem~\ref{thm:no-atom} -- the proof is comparatively much simpler when we can assume that distribution of one of the $X_{j}$ has no atom on $[0, \infty)$. Meanwhile, in the proof of necessity, we use Proposition~\ref{prop:fluctuating-sum} -- the techniques used for this direction in the balls-in-bins model (or related preferential attachment models), involving normal approximation or approximation by Brownian motion~\cite{khanin-neuron-polarity, oliveira-brownian-motion, dereich-morters-persistence, banerjee-bhamidi}, do not extend to the general setting where variances of waiting times may not exist. 
\end{rmq}
\begin{rmq} \label{rem:non-zero-one}
    Item~\ref{item:leadership} of Theorem~\ref{thm:leadership-classification} shows that leadership occurs with probability $0$ or $1$ in general. This does not seem to be immediately clear from applying, for example, standard zero-one laws. Moreover, it is actually the case that monopoly and strict leadership are \emph{not} zero-one events in general, this is the content of Proposition~\ref{prop:counter}, stated and proved in Section~\ref{sec:general}. Intuitively, the added requirement of having a `unique' leader removes the zero-one property. In addition, note that leadership is a property that does not depend on the initial condition, whereas this is not the case for monopoly or strict leadership. 
\end{rmq}

\begin{rmq}
    In the definition of $(v_{a}(\tau_{n}))_{a \in [A], n \in \mathbb{N}_{0}}$ the sequences $(X^{\ssup{a}}_{j})_{j \in \mathbb{N}}$ are identically distributed across $a \in [A]$. We believe it may be possible to extend the results, to some extent, to the case where the sequences $(X^{\ssup{a}}_{j})_{j \in \mathbb{N}}$ are independent, but not identically distributed, random variables, but do not endeavour to do this here. Such processes would generalise balls-in-bins-models with asymmetric feedback (see, e.g.,~\cite{balls-in-bins-asymmetric-feedback}). 
\end{rmq}

The following corollary provides criteria for strict leadership which may be easier to verify: 

\begin{cor} \label{cor:strict-leadership}
    In the process $(v_{a}(\tau_{n}))_{a \in [A], n \in \mathbb{N}_0}$, if the series $\sum_{j=1}^{\infty} (X_{j} - X'_{j})$ converges almost surely, $\sum_{j=1}^{\infty} \Prob{X_{j} - X'_{j} \neq 0} = \infty$, 
    and, for any $\eps > 0$ 
    \begin{equation} \label{eq:borel-cantelli-any-eps}
        \sum_{j=1}^{\infty} \Prob{X_{j} > \eps} < \infty, 
    \end{equation}
    then strict leadership occurs with probability one.
\end{cor}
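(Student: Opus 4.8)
The plan is to prove strict leadership directly (which also re-derives leadership), following the strategy behind Theorem~\ref{thm:leadership-classification}(\ref{item:strict-leadership}) but exploiting that~\eqref{eq:borel-cantelli-any-eps} forces the waiting times to vanish, so that one does not actually need the summability~\eqref{eq:borel-cantelli}: it is enough that, for each ordered pair $a\ne a'$, the event in~\eqref{eq:borel-cantelli} occurs for only finitely many $k$ almost surely. One cannot just invoke Item~(\ref{item:strict-leadership}) verbatim, since the naive estimate $\Prob{0\le Q_k<X_{k+1}}\le\Prob{|Q_k|<\eps}+\Prob{X_{k+1}>\eps}$ (with $Q_k$ the quantity inside~\eqref{eq:borel-cantelli}) only disposes of the second term, after a diagonal choice $\eps=\eps_k\downarrow0$ using~\eqref{eq:borel-cantelli-any-eps}; the first term decays at best like $\eps_k$, which can be too slow for summability, so~\eqref{eq:borel-cantelli} is strictly stronger than the present hypotheses.

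Two preliminary facts. First, by~\eqref{eq:borel-cantelli-any-eps} and the first Borel--Cantelli lemma, for each $m\in\mathbb{N}$ one has $X^{\ssup{a}}_k\le 1/m$ for all large $k$ almost surely; intersecting over $m$ gives $X^{\ssup{a}}_k\to0$ a.s., for every $a\in[A]$. Second, for $a\in[A]$ put $T_a(\ell):=\sum_{j=v_a(0)+1}^{\ell}X^{\ssup{a}}_j$ for $\ell\ge v_a(0)$ (the first time $a$ attains value $\ell$) and $T_a(\ell):=0$ for $\ell< v_a(0)$, so $v_a(t)=\sup\{\ell:T_a(\ell)\le t\}$. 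For $a\ne a'$ with (say) $v_a(0)\le v_{a'}(0)$, and $\ell\ge v_{a'}(0)$,
\[
T_a(\ell)-T_{a'}(\ell)=\sum_{j=v_a(0)+1}^{v_{a'}(0)}X^{\ssup{a}}_j+\sum_{j=v_{a'}(0)+1}^{\ell}\bigl(X^{\ssup{a}}_j-X^{\ssup{a'}}_j\bigr),
\]
the first sum being an a.s.-finite random variable and the second a partial sum of a tail of the a.s.-convergent series $\sum_j(X_j-X'_j)$; hence $T_a(\ell)-T_{a'}(\ell)\to\Delta_{a,a'}$ a.s., for some finite $\Delta_{a,a'}$, and clearly $\Delta_{a,a'}+\Delta_{a',a''}=\Delta_{a,a''}$. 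By Theorem~\ref{thm:no-atom}, $\Delta_{a,a'}$ has no atom: if the first alternative of~\eqref{eq:app-no-atom-cond} holds then $\sum_{j>v_{a'}(0)}\Prob{X_j\ne X'_j}=\infty$ (only finitely many terms of a divergent series have been removed), so the tail series is atomless; if the second alternative holds, some $j_0>v_a(0)\wedge v_{a'}(0)$ has $\Prob{X_{j_0}\ne X'_{j_0}}=1$, forcing the common law of $X_{j_0},X'_{j_0}$ to be non-atomic, so that either $X^{\ssup{a}}_{j_0}-X^{\ssup{a'}}_{j_0}$ is a non-atomic summand of the tail series (when $j_0>v_{a'}(0)$) or $X^{\ssup{a}}_{j_0}$ is a non-atomic summand of the finite sum (when $v_a(0)<j_0\le v_{a'}(0)$); either way $\Delta_{a,a'}$, a sum of independent terms one of which is non-atomic, is non-atomic. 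Thus $\Prob{\Delta_{a,a'}=0}=0$, and, taking a union over the finitely many ordered pairs, from now on I work on the almost sure event on which $X^{\ssup{a}}_k\to0$ for every $a$ and $\Delta_{a,a'}\ne0$ for every $a\ne a'$.

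On that event, suppose $\Delta_{a,a'}<0$; then $T_a(\ell+1)-T_{a'}(\ell)=(T_a(\ell)-T_{a'}(\ell))+X^{\ssup{a}}_{\ell+1}\to\Delta_{a,a'}<0$, so $T_a(\ell+1)<T_{a'}(\ell)$ for all $\ell\ge L$ (some random $L$); hence whenever $v_{a'}(t)=\ell\ge L$ we have $T_a(\ell+1)<T_{a'}(\ell)\le t$, so $v_a(t)\ge\ell+1>v_{a'}(t)$. Since $\sum_a v_a(\tau_n)\ge n\to\infty$ and $[A]$ is finite, $L^{*}:=\{a:v_a(\tau_n)\to\infty\}$ is non-empty; any $a\notin L^{*}$ has $v_a(\tau_n)$ non-decreasing in $n$ and not tending to infinity, hence bounded, so for large $n$ the maximum of $(v_a(\tau_n))_a$ is attained inside $L^{*}$. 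Within $L^{*}$, for $a\ne a'$ exactly one of $\Delta_{a,a'}<0$ and $\Delta_{a,a'}>0$ holds, additivity of $\Delta$ gives transitivity, and so the agents of $L^{*}$ are strictly linearly ordered by ``eventually having the larger value'' (applying the previous display, valid because $v_{a'}(\tau_n)\to\infty$, hence $v_{a'}(\tau_n)\ge L$ eventually); letting $a^{*}$ be the top agent, $v_{a^{*}}(\tau_n)>v_b(\tau_n)$ for all large $n$ and every $b\in L^{*}\setminus\{a^{*}\}$, while $v_{a^{*}}(\tau_n)\to\infty$ eventually exceeds the bounded $v_b(\tau_n)$ for $b\notin L^{*}$. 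Hence for all large $n$, $a^{*}$ is the unique agent attaining $\max_a v_a(\tau_n)$; strict leadership occurs almost surely.

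The main obstacle is recognizing that Item~(\ref{item:strict-leadership}) of Theorem~\ref{thm:leadership-classification} cannot be quoted as a black box here, and isolating what replaces it: the almost sure non-vanishing of the time-lags $\Delta_{a,a'}$, which Theorem~\ref{thm:no-atom} supplies from~\eqref{eq:app-no-atom-cond}, together with $X^{\ssup{a}}_k\to0$, is exactly what makes ties disappear. Two smaller points need care — tracking initial values through the two alternatives of~\eqref{eq:app-no-atom-cond} so that the non-atomic summand really sits inside $\Delta_{a,a'}$, and the bookkeeping around explosion in the last step (several agents racing to infinity, or one such agent overtaking the others only after they stall) — both of which are absorbed by the dichotomy around $L^{*}$.
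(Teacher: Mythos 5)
Your proposal is correct and rests on exactly the same ingredients as the paper's proof: Borel--Cantelli applied to~\eqref{eq:borel-cantelli-any-eps} to force the waiting times to vanish, and Theorem~\ref{thm:no-atom} together with~\eqref{eq:app-no-atom-cond} to rule out an atom at $0$ for the limiting time-lag between two agents. The only difference is packaging — the paper argues by contradiction from~\eqref{eq:inf-many-k} (a tie occurring infinitely often would force that lag to equal $0$), while you argue directly by totally ordering the agents via the sign of $\Delta_{a,a'}$ — which makes your write-up a little more self-contained but is not a different route.
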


\subsection{Applications to balls-in-bins models} \label{sec:balls-in-bins}

As previously mentioned, a particular instance of the model $(v_{a}(\tau_{n}))_{a \in [A], n \in \mathbb{N}_{0}}$ is the \emph{balls-in-bins} model with, possibly random feedback function (see also Proposition~\ref{prop:first-equivalence} for explicit proof). Suppose one begins at time zero with $A$ urns, $(u_{a}(0))_{a \in [A]}$, with each $u_{a}(0) \in \mathbb{N}$, representing the number of \emph{balls} that urn contains. To each urn $a\in [A]$ is a mutually independent sequence of random variables $(F_{a}(j))_{j \in \mathbb{N}_{0}}$, where, for each $a, a' \in [A]$, we have $(F_{a}(j)_{j \in \mathbb{N}_{0}} \sim (F_{a'}(j))_{j \in \mathbb{N}_{0}}$.  

\begin{defn} \label{def:urn-defn}
Given an initial condition $(u_{a}(0))_{a \in [A]}$, and random variables $(F_{a}(j))_{a \in [A], j \in \mathbb{N}_{0}}$, the balls-in-bins process with feedback $(u_{a}(n))_{a \in [A], n \in \mathbb{N}_{0}}$ is the discrete time Markov process defined recursively as follows. 
\begin{enumerate}
    \item At time $0$, sample the random variables $(F_{a}(u_{a}(0)))_{a \in [A]}$.
    \item At time $n+1$, given the collection $(u_{a}(n))_{a \in [A]}$, and $(F_{a}(u_{a}(n)))_{a \in [A]}$, sample an urn $a$ with probability 
    \[
    \frac{F_{a}(u_{a}(n))}{\sum_{a=1}^{A} F_{a}(u_{a}(n))}.
    \]
    Set $u_{a}(n+1) = u_{a}(n) + 1$, and for $a' \neq a$, set $u_{a'}(n+1) = u_{a'}(n)$. Sample $F_{a}(u_{a}(n+1))$. 
\end{enumerate}
\end{defn}

Suppose $(F(j))_{j \in \mathbb{N}_{0}}$, $(F'(j))_{j \in \mathbb{N}_{0}}$ denote generic, independent sequences of independent random variables with $(F(j))_{j \in \mathbb{N}_{0}} \sim (F'(j))_{j \in \mathbb{N}_{0}} \sim (F_{a}(j))_{j \in \mathbb{N}_0}$, for $a \in [A]$.
Suppose that the events of leadership, strict leadership and monopoly from Definition~\ref{def:leadership-class} are defined analogously for the balls-in-bins model with feedback $(u_{a}(n))_{a \in [A], n \in \mathbb{N}_{0}}$.

\begin{thm} \label{thm:balls-in-bins}
    In the balls-in-bins model with feedback $(u_{a}(n))_{a \in [A], n \in \mathbb{N}_{0}}$, with feedback function given by~$(F(j))_{j \in \mathbb{N}_{0}}$, we have the following statements: 
    \begin{enumerate}
        \item \label{item:balls-monopoly} Monopoly occurs with probability zero or one. Moreover, monopoly occurs with probability one if and only if $\sum_{j=0}^{\infty}1/F(j) < \infty$ almost surely.
        \item \label{item:balls-leadership} Strict leadership occurs with probability zero or one. Moreover, strict leadership occurs with probability one if and only if $\sum_{j=0}^{\infty} 1/F(j)^2 < \infty$ almost surely.  
         %for all $\epstwo > 0$
 %       \begin{equation}
%            \sum_{j=1}^{\infty} \E{\frac{f'(j,\omega)}{F(j) + F'(j)} e^{-F(j) C}} < \infty
  %      \end{equation}
   %     and 
       % \begin{equation} \label{eq:balls-in-bins-leadership}
        %    \sum_{j=0}^{\infty} \frac{1}{F(j)^2} < \infty \quad \text{almost surely.}
        %\sum_{j=0}^{\infty} \Prob{F(j) \leq \epstwo} < \infty \quad \text{ and } \quad \E{\sum_{j=0}^{\infty} \frac{1}{F(j)^2} \mathbf{1}_{\left\{F(j) \geq \epstwo \right\}}} < \infty. 
             %\sum_{j=0}^{\infty} \E{\frac{F'(j)}{F(j) + F'(j)}\left(e^{-F(j) C} + \frac{1-e^{-CF(j)}(1 + CF(j))}{F(j)^2 } \right)} \quad \text{converges.}
        %\end{equation}
    \end{enumerate}    
\end{thm}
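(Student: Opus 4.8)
The plan is to obtain Theorem~\ref{thm:balls-in-bins} as a corollary of Theorem~\ref{thm:leadership-classification} and Corollary~\ref{cor:strict-leadership}, by realising the balls-in-bins process as the instance of the competing growth process in which the waiting times are mixtures of exponentials. Concretely: conditionally on the feedback sequences $(F_a(j))_{a,j}$, declare the $X^{\ssup{a}}_j$ to be independent with $X^{\ssup{a}}_j \sim \Exp{F_a(j-1)}$; by Proposition~\ref{prop:first-equivalence} the embedded discrete process $(v_a(\tau_n))$ then has the law of $(u_a(n))$, and the three events of Definition~\ref{def:leadership-class} translate directly. A generic waiting time satisfies $X_j \stackrel{d}{=} \Exp{F(j-1)}$ with $0<F(j-1)<\infty$ random, so in particular $X_j$ (and $X'_j$) is atomless on $[0,\infty)$, whence $\Prob{X_j \neq X'_j}=1$ for every $j$; consequently both clauses of~\eqref{eq:app-no-atom-cond} hold automatically, for every initial condition. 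It then only remains to translate the three series conditions of Theorem~\ref{thm:leadership-classification} into conditions on $F$; throughout we use that, since $(F(j))_j$ are independent, the events $\{\sum_j 1/F(j)<\infty\}$ and $\{\sum_j 1/F(j)^2<\infty\}$ lie in the corresponding tail $\sigma$-fields and hence have probability $0$ or $1$ (which is also where the ``probability zero or one'' assertions come from).

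For item~\ref{item:balls-monopoly}: I claim $\sum_j X_j$ converges a.s.\ iff $\sum_j 1/F(j) <\infty$ a.s. Conditioning on $\mathcal F := \sigma(F(j):j)$, the $X_j$ are independent exponentials, and the classical identities $\Ex{e^{-\sum_{j\ge 1} X_j} \mid \mathcal F} = \prod_{j\ge1}\tfrac{F(j-1)}{1+F(j-1)}$ and $\Ex{\sum_{j\ge 1} X_j \mid \mathcal F}=\sum_{j\ge1}1/F(j-1)$ show $\Prob{\sum_{j\ge 1} X_j<\infty \mid \mathcal F}=\ind{\sum_j 1/F(j)<\infty}$; taking expectations gives the claim. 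If $\sum_j 1/F(j)=\infty$ a.s.\ then $\sum_j X_j$ diverges a.s., so monopoly has probability zero by Item~\ref{item:monopoly} of Theorem~\ref{thm:leadership-classification}; if $\sum_j 1/F(j)<\infty$ a.s.\ then $\sum_j X_j$ converges a.s.\ and, since~\eqref{eq:app-no-atom-cond} holds, monopoly has probability one. As these two cases are complementary, item~\ref{item:balls-monopoly} follows.

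For item~\ref{item:balls-leadership} the crux is the equivalence
\[
\sum_j (X_j - X'_j)\text{ converges a.s.} \iff \sum_j 1/F(j)^2 < \infty\text{ a.s.}
\]
To see it, condition on $\mathcal G := \sigma(F(j),F'(j):j)$: the $Y_j := X_j - X'_j$ are then independent and symmetric, so by the (conditional form of the) three-series theorem $\sum_j Y_j$ converges given $\mathcal G$ iff $\sum_j \Ex{\min(Y_j^2,1) \mid \mathcal G}<\infty$. A routine estimate --- using comparability of the moments of a difference of two exponentials together with the Paley--Zygmund and Markov inequalities --- gives, uniformly over $\lambda,\lambda'\in(0,\infty)$,
\[
\Ex{\min\bigl((E-E')^2,1\bigr)} \asymp \min(\lambda^{-2},1) + \min(\lambda'^{-2},1), \qquad E\sim\Exp{\lambda},\ E'\sim\Exp{\lambda'}\ \text{independent},
\]
so $\sum_j\Ex{\min(Y_j^2,1)\mid\mathcal G}<\infty$ iff $\sum_j\min(1/F(j)^2,1)<\infty$ and the analogue for $F'$; and since these summands vanish precisely when they eventually agree with $1/F(j)^2$, this is in turn equivalent to $\{\sum_j 1/F(j)^2<\infty\}\cap\{\sum_j 1/F'(j)^2<\infty\}$. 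Integrating out $\mathcal G$ and using independence of $F,F'$ and the $0$--$1$ law yields the displayed equivalence. Granting it: if $\sum_j 1/F(j)^2<\infty$ a.s.\ then $\sum_j(X_j-X'_j)$ converges a.s., \eqref{eq:app-no-atom-cond} holds, and \eqref{eq:borel-cantelli-any-eps} holds as well --- indeed $\sum_j\min(1/F(j)^2,1)<\infty$ a.s.\ for the bounded independent summands $\min(1/F(j)^2,1)$ forces $\sum_j\Ex{\min(1/F(j)^2,1)}<\infty$, and the elementary bound $e^{-\eps\lambda}\le C_\eps\min(\lambda^{-2},1)$ then gives $\sum_j\Prob{X_j>\eps}=\sum_j\Ex{e^{-\eps F(j-1)}}<\infty$ --- so Corollary~\ref{cor:strict-leadership} yields strict leadership with probability one. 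Conversely, if $\sum_j 1/F(j)^2=\infty$ a.s., then $\sum_j(X_j-X'_j)$ diverges a.s., so by Item~\ref{item:leadership} of Theorem~\ref{thm:leadership-classification} leadership --- hence the stronger event strict leadership --- has probability zero. Complementarity of the two cases completes item~\ref{item:balls-leadership}.

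The main obstacle is the equivalence in the third paragraph: pinning down $\sum_j 1/F(j)^2$ as the exact threshold for a.s.\ convergence of the symmetric series $\sum_j(X_j-X'_j)$ of exponential-mixture differences. This requires (i) passing through the symmetric form of the three-series theorem conditionally on the feedback (a conditional three-series / conditional Borel--Cantelli step, unavoidable because the $X_j$ are genuine mixtures rather than plain exponentials), and (ii) the two-sided control of $\Ex{\min((E-E')^2,1)}$ for exponentials of arbitrary rates, whose proof is an elementary but slightly delicate case analysis. Everything else is bookkeeping: the reduction of Proposition~\ref{prop:first-equivalence}, the atomlessness of exponential mixtures, the standard fact that $\sum_j \Exp{\lambda_j}$ is a.s.\ finite iff $\sum_j 1/\lambda_j<\infty$, and the observation that the relevant series events are tail events of independent sequences.
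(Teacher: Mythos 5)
Your proposal is correct, and at the top level it follows the paper's architecture: embed the urn process as the competing growth process with conditionally exponential waiting times, note that the mixtures are atomless so that~\eqref{eq:app-no-atom-cond} holds automatically, and reduce both items to the random-series hypotheses of Theorem~\ref{thm:leadership-classification} and Corollary~\ref{cor:strict-leadership}; your treatment of Item~\ref{item:balls-monopoly} (Laplace transform and conditional mean) is identical to the paper's. Where you genuinely diverge is the key step for Item~\ref{item:balls-leadership}, namely the equivalence between a.s.\ convergence of $\sum_j(X_j-X'_j)$ and $\sum_j 1/F(j)^2<\infty$ a.s. The paper never conditions on the full feedback $\sigma$-field: it truncates on the events $\{F(j),F'(j)>\epstwo\}$, computes the tail and truncated second moment of a difference of exponentials exactly (Lemma~\ref{lem:diff-exp-three-series}), passes through the intermediate condition~\eqref{eq:first-fitness-ass} of Proposition~\ref{prop:first-equivalence}, and converts that to a.s.\ summability of $1/F(j)^2$ via Proposition~\ref{prop:three-series-positive}. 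You instead condition on $\mathcal{G}$, apply the symmetric three-series criterion ($\sum_j Y_j$ converges iff $\sum_j\E{\min(Y_j^2,1)}<\infty$) under the conditional law, and use the uniform estimate $\E{\min((E-E')^2,1)}\asymp\min(\lambda^{-2},1)+\min(\lambda'^{-2},1)$. That estimate is true and can be read off from the paper's Lemma~\ref{lem:diff-exp-three-series}: for $\lambda,\lambda'\geq 1$ both sides are comparable to $\lambda^{-2}+\lambda'^{-2}$, while if either rate is at most $1$ then $\Prob{|E-E'|>1}$ is bounded below by an absolute constant. Your upgrade from a.s.\ summability of the bounded nonnegative independent summands $\min(1/F(j)^2,1)$ to summability of their means is exactly the role played by Proposition~\ref{prop:three-series-positive} in the paper, and is also what you need for~\eqref{eq:borel-cantelli-any-eps}. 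What your route buys is a shorter path that skips~\eqref{eq:first-fitness-ass} entirely; what it costs is that the conditional three-series and conditional zero--one law steps must be justified via regular conditional distributions (routine, but it should be said), and the two-sided moment estimate is left as a sketch. One bookkeeping slip: the embedding of the urn process is Proposition~\ref{prop:cont-equivalence}, not Proposition~\ref{prop:first-equivalence}, which is the strict-leadership characterisation.
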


\begin{rmq}
    Whilst Item~\ref{item:balls-monopoly} of Theorem~\ref{thm:balls-in-bins} has not been stated in the context with random feedback functions before, the proof of Item~\ref{item:balls-monopoly} uses essentially the same argument as that from Rubin~\cite{davis-rubin}. However, we include its proof for completeness. Moreover, as mentioned in Remark~\ref{rem:novelty} one of the directions of Item~\ref{item:balls-leadership} uses similar ideas to~\cite{khanin-neuron-polarity, oliveira-brownian-motion}, but the general result requires novel techniques. 
\end{rmq}

%{\color{blue} Use the inequality $e^{-x} \leq \frac{1}{x^2}$ for all $x \geq 0$. First derivative $2xe^{-x} - x^{2} e^{-x} = xe^{-x}(2-x)$, so maximum at $2$; at $2$, $x^2e^{-x} = 4e^{-2} < 1$. This reduces the problem }
%{\color{red} Something about the two series theorem somewhere}
%\begin{cor}
%    Suppose $(u_{a}(n))_{a \in [A], n \in \mathbb{N}}$ is a balls-in-bins process with deterministic feedback function $f: \mathbb{N}_{0} \rightarrow (0, \infty)$. Then  $\Prob{\bigcup_{a \in [A]} \slead_{a}} \in \{0, 1\}$ with $\Prob{\bigcup_{a \in [A]} \slead_{a}} = 1$ if and only if $\sum_{j=0}^{\infty} \frac{1}{f(j)^2} < \infty$. 
%\end{cor}
\begin{comment}
\begin{assumption} \label{ass:abs-continuity}
Suppose that one of the following conditions is satisfied:
\begin{enumerate}
    \item We have $\inf_{k \in \mathbb{N}} \Prob{X_{k} = X'_{k}} = 0$.
    \item We have $\liminf_{k \to \infty} \Prob{X_{k} \neq X'_{k}} > 0$.
    \item We have $\sum_{k=1}^{\infty} \Prob{X_{k} \neq X'_{k}} < \infty$. 
\end{enumerate}
\end{assumption}
\end{comment}
\begin{comment}
\begin{assumption}[Assumption for Monopoly]
    For a sequence of mutually independent, positive random variables $(X_{j})_{j \in \mathbb{N}}$ assume that there exists $C > 0$ such that 
     \begin{enumerate}
%    \item \label{item:expectations} For each $j \in \mathbb{N}$ we have $\E{X_{j}} < \infty$, but $\sum_{j=1}^{\infty} X_{j} = \infty$ almost surely. 
     \item \label{item:constant-kolm-three-series}  $\sum_{j=1}^{\infty} \Prob{\left|S_{j}\right| > C} < \infty$, and
     \item $\sum_{j=1}^{\infty} \E{X_{j} \mathbf{1} \left\{S_{j} \leq C\right\}} < \infty$.
 \end{enumerate}
\end{assumption}
\end{comment}

\subsection{Auxiliary results on the convergence of random series} \label{sec:aux}

As the results of this paper rely heavily on criteria for convergence of random series of independent random variables, we recall here the well-known Kolmogorov three series theorem:
\begin{thm*}[Kolmogorov three series theorem, e.g.{~\cite[Theorem~2.5.8., page~73]{durrett}}] 
 For a sequence of mutually independent random variables $(S_{j})_{j \in \mathbb{N}}$, let $C > 0$ be given. Then the series $\sum_{j=1}^{\infty} S_{j}$ converges almost surely if and only if
 \begin{equation} \label{eq:kolmogorov-three-series}
     \sum_{j=1}^{\infty} \Prob{\left|S_{j}\right| > C} < \infty, \quad \sum_{j=1}^{\infty} \E{S_{j} \mathbf{1}_{\left|S_{j}\right| \leq C}} < \infty \quad \text{and} \quad \sum_{j=1}^{\infty} \var{S_{j} \mathbf{1}_{\left|S_{j}\right| \leq C}} < \infty
 \end{equation}
%Then, ~\eqref{eq:kolmogorov-three-series} is satisfied. 
\end{thm*}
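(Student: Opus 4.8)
The plan is to prove the two implications separately, using the truncated variables $Y_j := S_j \mathbf{1}_{|S_j| \le C}$ and the events $\{S_j \ne Y_j\} = \{|S_j| > C\}$ as the common bridge, with Kolmogorov's maximal inequality (together with its reverse form) and the two Borel--Cantelli lemmas as the engine in each direction.

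For sufficiency, assume the three conditions in \eqref{eq:kolmogorov-three-series}. The first series together with the first Borel--Cantelli lemma shows that, almost surely, $S_j = Y_j$ for all but finitely many $j$, so it suffices to prove that $\sum_j Y_j$ converges a.s. Decompose $Y_j = (Y_j - \E{Y_j}) + \E{Y_j}$: the deterministic series $\sum_j \E{Y_j}$ converges by the second condition, and for the centred part Kolmogorov's maximal inequality gives, for $m \le n$, $\Prob{\max_{m \le k \le n} \bigl|\sum_{j=m}^{k}(Y_j - \E{Y_j})\bigr| > \eps} \le \eps^{-2}\sum_{j=m}^{n}\Var(Y_j)$, which tends to $0$ as $m,n \to \infty$ by the third condition; hence the partial sums of $\sum_j(Y_j - \E{Y_j})$ are a.s. Cauchy, so convergent. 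Summing the two pieces completes this direction.

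For necessity, assume $\sum_j S_j$ converges a.s. Then $S_j \to 0$ a.s., so $\{|S_j| > C\}$ occurs only finitely often a.s.; by the second Borel--Cantelli lemma and independence this forces $\sum_j \Prob{|S_j| > C} < \infty$, which is the first condition, and (again via the first Borel--Cantelli lemma) also shows that $\sum_j Y_j$ converges a.s. To recover the remaining two conditions from convergence of $\sum_j Y_j$ with $|Y_j| \le C$, I would symmetrise: let $(Y_j')$ be an independent copy of $(Y_j)$ and set $Z_j := Y_j - Y_j'$, so the $Z_j$ are independent, symmetric, bounded by $2C$, and $\sum_j Z_j$ converges a.s. The crucial step is the reverse Kolmogorov inequality: for independent mean-zero $W_1,\dots,W_n$ with $|W_i| \le M$ and $s_n^2 := \sum_{i}\Var(W_i)$, one has $\Prob{\max_{k\le n}\bigl|\sum_{i\le k}W_i\bigr| > \lambda} \ge 1 - (\lambda+M)^2/s_n^2$. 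If $\sum_j \Var(Z_j) = \infty$, then choosing $n$ large makes the left-hand side (with $W=Z$) at least $1/2$ for every fixed $\lambda$, contradicting the fact that $\sup_k \bigl|\sum_{j\le k}Z_j\bigr| < \infty$ a.s.; hence $\sum_j \Var(Z_j) < \infty$, equivalently $\sum_j \Var(Y_j) < \infty$, the third condition. Finally, given the third condition, the maximal-inequality argument from the sufficiency step shows $\sum_j (Y_j - \E{Y_j})$ converges a.s.; subtracting this from the a.s.-convergent $\sum_j Y_j$ leaves $\sum_j \E{Y_j}$ convergent, the second condition.

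The main obstacle is the necessity of the variance condition: the only nonroutine ingredient is the reverse Kolmogorov inequality, and the symmetrisation is forced because convergence of $\sum_j Y_j$ gives no direct control on $\sum_j(Y_j-\E{Y_j})$ until the means are known to be summable, which is itself part of what we must prove. I would establish the reverse inequality by a first-exit argument: let $T$ be the first index $k$ with $\bigl|\sum_{i\le k}W_i\bigr| > \lambda$, split $s_n^2 = \E{\bigl(\sum_{i\le n}W_i\bigr)^2}$ over the events $\{T > n\}$ and $\{T = k\}$, $k \le n$, bound the partial sum by $\lambda$ on $\{T > n\}$ and by $\lambda + M$ at the exit time, and use that the post-$T$ increment has zero conditional mean and variance at most $s_n^2$; rearranging yields $\Prob{\max_{k \le n}\bigl|\sum_{i\le k} W_i\bigr| \le \lambda} \le (\lambda+M)^2/s_n^2$, which is the stated bound.
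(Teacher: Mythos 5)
Your proof is correct: this statement is a classical result that the paper only quotes (with a reference to Durrett) and does not prove, and your argument --- truncation plus Borel--Cantelli, Kolmogorov's maximal inequality for sufficiency, and symmetrisation with the reverse maximal inequality via a first-exit decomposition for necessity --- is precisely the standard textbook proof found in the cited source. The only cosmetic point is that the second series in \eqref{eq:kolmogorov-three-series} should be read as ``converges'' rather than ``is finite'' (the summands $\E{S_j\mathbf{1}_{|S_j|\le C}}$ may be negative), which is exactly how you treat it.
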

\begin{rmq} \label{rmq:two-series}
    Recall that, if $\sum_{j=1}^{\infty} \E{S_{j}} < \infty$ and $\sum_{j=1}^{\infty} \var{S_{j}} < \infty$ then $\sum_{j=1}^{\infty} S_{j}$ converges almost surely - this is the Kolmogorov two series theorem. 
    Note also that, if each $S_{j} \geq 0$ almost surely, then one only needs to check convergence of the first two series in~\eqref{eq:kolmogorov-three-series} - see Proposition~\ref{prop:three-series-positive}. 
\end{rmq}

The proofs of Theorem~\ref{thm:leadership-classification} and Corollary~\ref{cor:strict-leadership} use Theorem~\ref{thm:no-atom} whilst the proof of Theorem~\ref{thm:leadership-classification} also uses Proposition~\ref{prop:fluctuating-sum}. We believe that Theorem~\ref{thm:no-atom} may be of interest in its own right. 

\begin{thm} \label{thm:no-atom}
    Suppose that $(Y_{j})_{j \in \mathbb{N}}$ is a sequence of mutually independent random variables such that $\sum_{j=1}^{\infty} Y_{j}$ converges almost surely. Then the distribution of $\sum_{j=1}^{\infty} Y_{j}$ contains an atom on $\mathbb{R}$ if and only if, for some collection $(c_{j})_{j \in \mathbb{N}} \in \mathbb{R}^{\mathbb{N}}$
        \begin{equation} \label{eq:atomic-sum-const}
            \forall j \in \mathbb{N} \quad \Prob{Y_{j} = c_{j}} > 0 \quad \text{ and } \quad \sum_{j=1}^{\infty} \Prob{Y_{j} \neq c_{j}} < \infty. 
        \end{equation}
\end{thm}

\begin{rmq}
    Equation~\eqref{eq:atomic-sum-const} in Theorem~\ref{thm:no-atom}, together with the Borel--Cantelli lemma shows that a convergent series of random variables $\sum_{j=1}^{\infty} Y_{j}$ contains an atom, if and only if, the random variables $(Y_{j})_{j \in \mathbb{N}}$ coincide with deterministic values $(c_{j})_{j \in \mathbb{N}}$ for all but finitely many $j$.
\end{rmq}

\begin{rmq} \label{rem:symm-constants}
Note that, if the random variables $(Y_{j})_{j \in \mathbb{N}}$ are symmetric, Equation~\eqref{eq:atomic-sum} can only be satisfied if 
\begin{equation} \label{eq:all-but-finitely-many-0}
\sum_{j=1}^{\infty} \Prob{Y_{j} \neq 0} < \infty. 
\end{equation}
Indeed, Equation~\eqref{eq:atomic-sum} implies that, for $j$ sufficiently large, $\Prob{Y_{j} = c_j} = \Prob{Y_{j} = - c_{j}} > 1/2$, which is impossible if infinitely many values of $c_{j}$ are non-zero. 
\end{rmq}
\begin{comment}
\begin{thm} \label{thm:no-atom}
    Suppose that $(Y_{j})_{j \in \mathbb{N}}$ is a sequence of mutually independent random variables such that $\sum_{j=1}^{\infty} Y_{j}$ converges almost surely. Suppose $(Y'_{j})_{j \in \mathbb{N}}$ is an independent sequence with $(Y'_{j})_{j \in \mathbb{N}} \sim (Y_{j})_{j \in \mathbb{N}}$. Then the distribution of $\sum_{j=1}^{\infty} Y_{j}$ contains an atom on $\mathbb{R}$ if and only if 
        \begin{equation} \label{eq:atomic-sum}
            \forall j \in \mathbb{N} \quad \Prob{Y_{j} = Y'_{j}} > 0 \quad \text{ and } \quad \sum_{j=1}^{\infty} \Prob{Y_{j} \neq Y'_{j}} < \infty. 
        \end{equation}
\end{thm}
\begin{rmq}
    Note that, for a random variable $Y_{j}$ we have $\Prob{Y_{j} = Y'_{j}} > 0$ if and only if the distribution of $Y_{j}$ contains an atom. Moreover, for sequences of indendent random variables $(Y_{j})_{j \in \mathbb{N}}$ and $(Y'_{j})_{j \in \mathbb{N}}$ (this fact is proved explicitly in Lemma~\ref{lem:cond-no-atom} for completeness). 
\end{rmq}
\end{comment}

Finally, we have the following useful proposition:

\begin{prop}[Divergence of series of symmetric random variables] \label{prop:fluctuating-sum}
    Suppose that $(S_{j})_{j \in \mathbb{N}}$ is a sequence of mutually independent, symmetric random variables. If $\sum_{j=1}^{\infty} S_{j}$ diverges almost surely, then 
        \begin{equation} \label{eq:varying-sum}
        \limsup_{n \to \infty} \sum_{j=1}^{n} S_j = \infty \quad \text{and} \quad \liminf_{n \to \infty} \sum_{j=1}^{n} S_{j} = -\infty \quad \text{almost surely.}
        \end{equation}
\end{prop}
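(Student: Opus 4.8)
The plan is to reduce the claim, via symmetry and the Kolmogorov $0$--$1$ law, to ruling out that the partial sums $T_n := \sum_{j=1}^n S_j$ stay almost surely bounded, and then to rule that out using a truncation together with the lower (converse) bound in Kolmogorov's maximal inequality. To set this up, note first that since the $S_j$ are independent with each $S_j$ symmetric, one has $(T_n)_{n\ge 1} \overset{d}{=} (-T_n)_{n\ge 1}$, hence $\limsup_n T_n \overset{d}{=} -\liminf_n T_n$. For every $m$ one has $\limsup_n T_n = T_{m-1} + \limsup_n \sum_{j=m}^{n} S_j$ with $T_{m-1}$ finite, so $\{\limsup_n T_n = +\infty\}$ lies in $\sigma(S_m, S_{m+1}, \dots)$; being a tail event it has probability $0$ or $1$, and so does $\{\liminf_n T_n = -\infty\}$, and by the identity in law these two events share a common probability $p$. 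Since \eqref{eq:varying-sum} is exactly the assertion $p = 1$, I would assume for contradiction that $p = 0$: then almost surely $\limsup_n T_n < \infty$ and $\liminf_n T_n > -\infty$, equivalently $R := \sup_n |T_n| < \infty$ almost surely.

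Next I would reduce to bounded summands. Since $|S_j| = |T_j - T_{j-1}| \leq 2R$ for all $j$, we get $\sup_j |S_j| < \infty$ almost surely, so there is $k_0 \in \N$ with $\Prob{\sup_j |S_j| \leq k_0} > 0$; by independence $\prod_j \Prob{|S_j| \leq k_0} > 0$, i.e.\ $\sum_j \Prob{|S_j| > k_0} < \infty$. Put $\tilde S_j := S_j \I{|S_j| \leq k_0}$ and $\tilde T_n := \sum_{j \leq n} \tilde S_j$. By Borel--Cantelli, almost surely $\tilde S_j = S_j$ for all large $j$, so $\tilde T_n$ and $T_n$ differ by an almost surely finite constant from some index onward; consequently $\sum_j \tilde S_j$ diverges almost surely and $\sup_n |\tilde T_n| < \infty$ almost surely. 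The $\tilde S_j$ are independent, symmetric (so $\E{\tilde S_j} = 0$) and bounded by $k_0$; were $\sum_j \var{\tilde S_j} < \infty$, the Kolmogorov two-series theorem (Remark~\ref{rmq:two-series}) would force $\sum_j \tilde S_j$ to converge almost surely, a contradiction, so $\sum_j \var{\tilde S_j} = \infty$.

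To close the argument I would apply the lower bound in Kolmogorov's maximal inequality: for independent mean-zero summands bounded by $k_0$, and for every $\lambda > 0$ and $n \in \N$,
\[
\Prob{\max_{1 \leq m \leq n} |\tilde T_m| > \lambda} \ \geq\ 1 - \frac{(\lambda + k_0)^2}{\sum_{j=1}^{n} \var{\tilde S_j}}.
\]
Because $\sum_j \var{\tilde S_j} = \infty$, the right-hand side tends to $1$ as $n \to \infty$, whence $\Prob{\sup_m |\tilde T_m| > \lambda} = 1$ for every $\lambda$ and therefore $\sup_m |\tilde T_m| = \infty$ almost surely; this contradicts $\sup_n |\tilde T_n| < \infty$ almost surely, so $p = 1$ and \eqref{eq:varying-sum} follows.

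I expect the delicate step to be the last one. It relies on the \emph{converse} form of Kolmogorov's inequality, which is available only for bounded summands, and it is precisely the observation that almost surely bounded partial sums force almost surely bounded increments — hence permit a truncation at a single level $k_0$ that is lossless for all large $j$ — that supplies the bounded-summand hypothesis and makes the contradiction go through. (The symmetry hypothesis enters twice and is essential: to get $\E{\tilde S_j}=0$, and, more importantly, to rule out the scenario $T_n \to -\infty$, which would make $\liminf_n T_n = -\infty$ while $\limsup_n T_n = -\infty$ as well.)
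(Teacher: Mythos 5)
Your proposal is correct, and it follows a genuinely different route than the paper after the shared opening reduction (using symmetry and the Kolmogorov $0$--$1$ law to conclude that $\{\limsup_n T_n = \infty\}$ and $\{\liminf_n T_n = -\infty\}$ have a common probability $p \in \{0,1\}$, so the claim is $p=1$). The paper then proceeds by splitting according to which clause of the Kolmogorov three-series test fails: if $\sum_j \Prob{|S_j|>C}=\infty$ for every $C$, it uses Borel--Cantelli to show partial sums cannot stay in any bounded window; otherwise it truncates at a level $C$ where that series converges and invokes the dichotomy theorem for martingales with bounded increments (Theorem~\ref{thm:bounded-martingale-divergence}) -- either the truncated sum converges (excluded by the three-series theorem) or it oscillates in both directions. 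Your argument instead assumes $p=0$ directly, observes that bounded partial sums force $\sup_j |S_j| \leq 2\sup_n|T_n| < \infty$ a.s.\ so that a single truncation level $k_0$ exists with $\sum_j\Prob{|S_j|>k_0}<\infty$, deduces $\sum_j \var{\tilde S_j}=\infty$ from the two-series theorem, and then closes with the \emph{converse} Kolmogorov maximal inequality for bounded, mean-zero summands. What you gain is the avoidance of the case split -- the contradiction hypothesis itself delivers the bounded-increment setting, so only one truncation is needed and the heavy-tail case never arises separately. What the paper gains is that it constructively identifies the mechanism of divergence in each regime (unbounded jumps versus infinite truncated variance) and leans on the somewhat more standard martingale dichotomy rather than on the less commonly cited lower bound in Kolmogorov's inequality. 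Both arguments are valid and of comparable length; yours is arguably a cleaner reduction.
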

\begin{comment}
Proposition~\ref{lem:no-atom} yields the following corollary. As the proof is short, we include it here, rather than with the other proofs. 
\begin{cor}\label{cor:convergent-series-no-atom}
Suppose that $(X_{j})_{j \in \mathbb{N}}, (X'_{j})_{j \in \mathbb{N}}$ are independent sequences of mutually independent random variables, such that $\sum_{j=1}^{\infty} X_{j}$ converges almost surely. Suppose also that $\sum_{j=1}^{\infty} \Prob{X_{j} \neq X'_{j}} = \infty$. Then, the distribution of $\sum_{j=1}^{\infty} X_{j}$ contains no atom on $\mathbb{R}$. 
\end{cor}
{\color{red}Make this necessary and sufficient, $\forall j: \Prob{X_{j} = X'_{j}} > 0$. }
\begin{proof}
    If $\sum_{j=1}^{\infty} X_{j}$ contains an atom in $\mathbb{R}$, the almost surely convergent sum $\sum_{j=1}^{\infty} (X_{j} - X'_{j})$ must contain an atom at $0$. Setting $S_{j} := X_{j} -X'_{j}$, however, the terms $(S_{j})_{j \in \mathbb{N}}$ meet the requirements for Item~\ref{item:lem-convergence} of Proposition~\ref{lem:no-atom} to apply, showing that $\sum_{j=1}^{\infty} (X_{j} - X'_{j})$ \emph{does not} have an atom at $0$, a contradiction. 
\end{proof}
\end{comment}

\section{Proofs of results} \label{sec:proofs}
%\subsection{Proofs of general results}
\subsection{Notation}
In what follows, it will be helpful to have explicit notation for the events of leadership, strict leadership and monopoly. 
For an agent $a \in [A]$, we define the events of \emph{leadership} of $a$, $\lead_{a}$, \emph{monopoly} of $a$, $\mon_{a}$,  and \emph{strict leadership} of $a$, $\slead_a$ respectively by 
\begin{equation} \label{eq:leadership}
\lead_{a} := \left\{\exists N \in \mathbb{N}\colon \; \forall n \geq N \; \forall a' \in [A] \; a' \neq a \implies v_{a}(\tau_{n}) \geq v_{a'}(\tau_n) \right\}. 
\end{equation}
\begin{equation} \label{eq:monopoly-def}
    \mon_{a} := \left\{\exists N \in \mathbb{N}\colon \; \forall n \geq N \; v_{a}(\tau_{n}) = v_{a}(\tau_{N}) \right\}, 
\end{equation}
and
\begin{equation} \label{eq:strong-leadership}
\slead_{a} := \left\{\exists N \in \mathbb{N}\colon \; \forall n \geq N \; \forall a' \in [A] \; a' \neq a \implies v_{a}(\tau_{n}) > v_{a'}(\tau_{n}) \right\}. 
\end{equation} 
Thus, leadership coincides with the event $\bigcup_{a \in [A]} \lead_{a}$, monopoly with $\bigcup_{a \in [A]} \mon_{a}$ and strict leadership with $\bigcup_{a \in [A]} \slead_{a}$.

We assume throughout that `empty' sums (for example $\sum_{j=1}^{0} X_{j}$) are zero. Moreover, given a collection of events $(\mathcal{E}_{j})_{j \in \mathbb{N}}$ in a probability space, we use the abbreviation ``$\mathcal{E}_{j} \text{ i. o.}$'' to denote the event that infinitely many $\mathcal{E}_{j}$ occur, i.e., $\bigcap_{n=1}^{\infty} \bigcup_{j=n}^{\infty} \mathcal{E}_{j}$ occurs. 

\subsection{Proofs of Theorem~\ref{thm:leadership-classification}, Corollary~\ref{cor:strict-leadership} and Proposition~\ref{prop:counter}} \label{sec:general}
We start with the proof of Theorem~\ref{thm:leadership-classification}, which is self-contained, except for usage of Theorem~\ref{thm:no-atom} and Proposition~\ref{prop:fluctuating-sum}. The proofs of Corollary~\ref{cor:strict-leadership} and Proposition~\ref{prop:counter} use ideas, and sometimes notation, from the proof of Theorem~\ref{thm:leadership-classification}. 

\begin{proof}[Proof of Item~\ref{item:leadership} of Theorem~\ref{thm:leadership-classification}]
For Item~\ref{item:leadership}, suppose, first, that $\sum_{j=1}^{\infty} (X_{j} - X'_{j})$ converges almost surely. We show that given any two agents, eventually one of them has a value that does not exceed the other, i.e. one is a `winner'. Thus, by iteratively ordering agents, we can find a leader. 

In this regard, suppose $a, a' \in [A]$ are agents. 
Then we define 
\[
\win(a,a') := \left\{\exists N \in \mathbb{N}\colon \; \forall n \geq N \;  v_{a}(\tau_{n}) \geq v_{a'}(\tau_{n}) \right\}.
\]
Note that $\win(\cdot, \cdot)$ leads to a transitive relation, in the sense that, for $a, a', a'' \in [A]$ 
\[
\win(a,a') \cap \win(a', a'') \subseteq \win(a,a''). 
\]
Our goal is to show that the relation is complete, that is, 
\begin{equation} \label{eq:complete-relation}
    \Prob{\bigcap_{a \neq a'} \left(\win(a,a') \cup \win(a', a) \right)} = 1, 
\end{equation}
so that, by iteratively ordering elements, the event $\bigcup_{a \in [A]} \bigcap_{a' \neq a} \win(a, a') = \bigcup_{a \in [A]} \lead_{a}$ occurs. 
%We can use the events $\win(\cdot, \cdot)$ to define a random ordering on $[A]$ based on whether or not, for $a, a' \in [A]$ $\win(a, a')$ occurs. Note that, almost surely, this ordering is reflexive and transitive. If we show that, almost surely, it is also complete, that is,  (so that any two elements of $[A]$ are comparable), then, we know that, almost surely, there exists a greatest element. In particular, 
%Since $\bigcup_{a \in [A]} \lead_{a} = \bigcup_{a \in [A]} \bigcap_{a' \neq a} \win(a, a')$ this implies that,
%\[1 = \Prob{\bigcup_{a \in [A]} \bigcap_{a' \neq a} \win(a, a')} = \Prob{\bigcup_{a \in [A]} \lead_{a}}, \]
%which is one direction of the claim of Item~\ref{item:leadership}. 
For~\eqref{eq:complete-relation}, suppose that $a, a' \in [A]$ with $a \neq a'$, and assume, without loss of generality, that $v_{a}(0) \leq v_{a'}(0)$. Now, if $\lim_{n \to \infty} v_{a}(\tau_{n}) < \infty$ or $\lim_{n \to \infty} v_{a'}(\tau_{n}) < \infty$, note that either $\win(a,a')$ occurs or $\win(a',a)$ occurs, depending on whether or not $\lim_{n \to \infty} v_{a}(\tau_{n}) \geq \lim_{n \to \infty} v_{a'}(\tau_{n})$. Thus, we may assume that $\lim_{n \to \infty} v_{a}(\tau_{n}) = \infty$ and $\lim_{n \to \infty} v_{a'}(\tau_{n}) = \infty$. There are now two cases. 
First, suppose that $\sum_{j=1}^{\infty}\Prob{X^{\ssup{a}}_{j} \neq X^{\ssup{a'}}_{j}} = \infty$. Then, noting that the random variables $X_{j} - X'_{j}$ are symmetric, by Theorem~\ref{thm:no-atom} and Remark~\ref{rem:symm-constants}, the almost surely convergent sum 
\begin{equation} \label{eq:random-conv-sum}
\sum_{j=v_{a}(0)+1}^{v_{a'}(0)} X^{\ssup{a}}_{j} + \sum_{j= v_{a'}(0)+1}^{\infty} \left(X^{\ssup{a}}_{j} - X^{\ssup{a'}}_{j} \right), 
\end{equation}
contains no atom at $0$, and thus is almost surely strictly positive or negative. Therefore, with probability one, for all $k$ sufficiently large, 
\begin{equation} \label{eq:pos-neg-sum}
\text{either} \quad 
\sum_{j=v_{a}(0)+1}^{k} X^{\ssup{a}}_{j}  < \sum_{j=v_{a'}(0)+1}^{k} X^{\ssup{a'}}_{j} \quad \text{or} \quad \sum_{j=v_{a'}(0)+1}^{k} X^{\ssup{a'}}_{j}  < \sum_{j=v_{a}(0)+1}^{k} X^{\ssup{a}}_{j}. 
\end{equation}
The partial sum $\sum_{j=v_{a}(0)+1}^{k} X^{\ssup{a}}_{j}$ represents the time taken for the the value of $a$ to reach $k$, and similarly for $a'$. Thus~\eqref{eq:pos-neg-sum} implies that one of $a$ or $a'$ is a winner, i.e., $\Prob{\win(a,a') \cup \win(a', a)} = 1$. As the finite intersection of almost sure events,~\eqref{eq:complete-relation} follows.
If, however, $\sum_{j=1}^{\infty}\Prob{X^{\ssup{a}}_{j} \neq X^{\ssup{a'}}_{j}} < \infty$, then by the Borel-Cantelli lemma, $X^{\ssup{a}}_{j} = X^{\ssup{a'}}_{j}$ for all but finitely many $j$. This implies that eventually, the partial sums converging to~\eqref{eq:random-conv-sum} are constant: the convergent series has all but finitely many terms equal to zero. Thus, for all $k$ sufficiently large, either~\eqref{eq:pos-neg-sum} holds, or 
\begin{equation} \label{eq:ident-sum}
\sum_{j=v_{a}(0)+1}^{k} X^{\ssup{a}}_{j}  = \sum_{j=v_{a'}(0)+1}^{k} X^{\ssup{a'}}_{j}. 
\end{equation}
As $\win(a,a') \cup \win(a', a)$ also allows for the case that \emph{both} $a$ and $a'$ have the same values for $n$ sufficiently large, again, this implies $\Prob{\win(a,a') \cup \win(a', a)} = 1$, and thus implies~\eqref{eq:complete-relation}. 

For the converse direction of Item~\ref{item:leadership}, suppose that $\sum_{j=1}^{\infty} (X_{j} - X'_{j})$ diverges almost surely, and again, assume without loss of generality that $v_{a}(0) \leq v_{a'}(0)$. Then by Proposition~\ref{prop:fluctuating-sum}, with probability one we have 
\begin{linenomath}
\begin{align}
& \limsup_{k \to \infty} \sum_{j=v_{a}(0)+1}^{v_{a'}(0) } X^{\ssup{a}}_{j} + \sum_{j= v_{a'}(0) + 1}^{k} \left(X^{\ssup{a}}_{j} - X^{\ssup{a'}}_{j} \right) = \infty \quad \text{and} \\ & \hspace{3cm} \liminf_{k \to \infty} \sum_{j=v_{a}(0)+1}^{v_{a'}(0)} X^{\ssup{a}}_{j} + \sum_{j= v_{a'}(0)+1}^{k} \left(X^{\ssup{a}}_{j} - X^{\ssup{a'}}_{j} \right) = -\infty. 
\end{align}
\end{linenomath}
Therefore, we have both 
\begin{linenomath}
\begin{align}
&    \sum_{j=v_{a}(0)+1}^{k} X^{\ssup{a}}_{j}  < \sum_{j=v_{a'}(0)+1}^{k} X^{\ssup{a'}}_{j} \quad \text{for infinitely many $k$, and }
 \\ & \hspace{3cm}   \sum_{j=v_{a}(0)+1}^{k} X^{\ssup{a}}_{j}  > \sum_{j=v_{a'}(0)+1}^{k} X^{\ssup{a'}}_{j} \quad \text{for infinitely many $k$.}
\end{align}
\end{linenomath}
This implies that $\Prob{\win(a,a') \cup \win(a', a)} = 0$, and, since $a, a'$ are arbitrary, that
\[
\Prob{\bigcup_{a \in [A]} \lead_{a}} = \Prob{\bigcup_{a \in [A]} \bigcap_{a' \neq a} \win(a, a')} = 0.
\]
\end{proof}
\begin{proof}[Proof of Item~\ref{item:monopoly} of Theorem~\ref{thm:leadership-classification}]
Suppose that, for each $a \in [A]$ we define 
\begin{equation} \label{eq:sigma-def}
\sigma(a) := \sum_{j=v_{a}(0)+1}^{\infty} X^{\ssup{a}}_{j}, \quad \text{i.e., the `first time' that $a$ attains infinite value.}    
\end{equation}
 Since $[A]$ is finite, there must be some agent of infinite value as $\sum_{a=1}^{A} v_a(\tau_{n}) \to \infty$, hence, by definition of the times $(\tau_{n})_{n \in \mathbb{N}}$. 
\begin{equation} \label{eq:explosion-attained}
\tau_{\infty} = \lim_{n \to \infty} \tau_{n} = \min_{a \in [A]} \sigma(a). 
\end{equation}
Now, for the first statement of Item~\ref{item:monopoly}, suppose that $\sum_{j=1}^{\infty} X_{j} = \infty$ almost surely. Then \[\Prob{\bigcap_{a \in [A]} \left\{ \sigma(a) = \infty \right\}} = 1,\] so that $\tau_{\infty} = \infty$ almost surely. Since, for each $a \in [A]$, the values $X^{\ssup{a}}_{j}$ are finite, as the finite sum of finite random variables, we have 
\begin{equation}
\sum_{j=v_{a}(0)+1}^{k} X^{\ssup{a}}_{j} < \tau_{\infty} = \infty \quad \text{for each $k \in \mathbb{N}$}. 
\end{equation}
Therefore, for each $a \in [A]$, $\lim_{n \to \infty} v_{a}(\tau_{n}) = \infty$. As monopoly entails that there is only a single agent with value tending to infinity, this implies that $\Prob{\bigcup_{a \in [A]} \mon_{a}} = 0$. 

For the converse statement, suppose that $\sum_{j=1}^{\infty} X_{j} <  \infty$ almost surely, and one of the statements Item~\ref{item:no-atom} or Item~\ref{item:no-det-sequence} from the theorem are satisfied. Then, for any $a, a' \in [A]$ with $a \neq a'$, (by applying Theorem~\ref{thm:no-atom} if Item~\ref{item:no-det-sequence} is satisfied), the random variable 
\[
\sum_{j=v_{a}(0)+1}^{\infty} X^{\ssup{a}}_{j} - \sum_{j=v_{a'}(0)+1}^{\infty} X^{\ssup{a'}}_{j}
\]
contains no atom on $\mathbb{R}$, hence $\Prob{\sigma(a) = \sigma(a')} = 0$.  This implies $\Prob{\bigcap_{a,a' \in [A], a \neq a'} \left\{\sigma(a) = \sigma(a')\right\}} = 0$, hence, almost surely, there exists a unique $a^{*} \in [A]$ such that,  $\sigma(a^{*}) = \tau_{\infty}$. By uniqueness, for any $a' \neq a^*$, $v_{a'}(\tau_{\infty}) < \infty$, thus, $\lim_{n \to \infty} v_{a'}(\tau_{n}) = c_{a'}$, for some $c_{a'} \in \mathbb{N}$. This implies that $\mon_{a^{*}}$ occurs. 
\end{proof}
\begin{proof}[Proof of Item~\ref{item:strict-leadership} of Theorem~\ref{thm:leadership-classification}]
For Item~\ref{item:strict-leadership} of Theorem~\ref{thm:leadership-classification}, note first that by Item~\ref{item:leadership} of Theorem~\ref{thm:leadership-classification}, $\Prob{\bigcup_{a \in [A]} \lead_{a}} = 1$. Any candidate for strict leadership must also be an eventual leader of the process.  Suppose then that for some $a \in [A]$, $\lead_{a}$ occurs, but $\slead_{a}$ does not. Then there must be at least one $a' \in [A]$ such that $v_{a}(\tau_{n}) = v_{a'}(\tau_{n})$ for infinitely many $n \in \mathbb{N}$, and since $a$ is a leader, $\lim_{n \to \infty} v_{a}(\tau_{n}) = \infty$. Thus, there must be infinitely many $k$ such that 
\begin{equation} \label{eq:inf-many-k}
\sum_{j=v_{a}(0) + 1}^{k} X^{\ssup{a}}_{j} \leq \sum_{j=v_{a'}(0) + 1}^{k} X^{\ssup{a'}}_{j} < \sum_{j=v_{a}(0) + 1}^{k+1} X^{\ssup{a}}_{j}. 
\end{equation}
Therefore, 
\begin{linenomath}
\begin{align} \label{eq:constant-catch-up}
\nonumber 
& \Prob{\left(\bigcup_{a \in [A]} \lead_{a}\right) \setminus \left(\bigcup_{a \in [A]} \slead_{a}\right)} \\ \nonumber & \hspace{1cm}  \leq \Prob{\bigcup_{a\neq a'} \left\{\sum_{j=v_{a}(0) + 1}^{k} X^{\ssup{a}}_{j} \leq \sum_{j=v_{a'}(0) + 1}^{k} X^{\ssup{a'}}_{j} < \sum_{j=v_{a}(0) + 1}^{k+1} X^{\ssup{a}}_{j} \quad  \text{ i.o. } \right\}} \\ & \hspace{1cm} \leq \sum_{a \neq a'} \Prob{0 \leq \sum_{j=v_{a'}(0) + 1}^{k} X^{\ssup{a'}}_{j} - \sum_{j=v_{a}(0) + 1}^{k} X^{\ssup{a}}_{j} < X^{\ssup{a}}_{k+1} \quad \text{ i.o. }}. 
\end{align}
\end{linenomath}
%Each of the summands in the right-side of~\eqref{eq:constant-catch-up} are maximised with $v_{a'}(0) = v_{\max}$ and $v_{a}(0) = v_{\min}$. 
But then by Equation~\eqref{eq:borel-cantelli} and the Borel-Cantelli lemma, each summand on the right-side of~\eqref{eq:constant-catch-up} is $0$, hence proving the claim. 
\end{proof}

The proof of Corollary~\ref{cor:strict-leadership} follows similar logic to the proof of Item~\ref{item:strict-leadership} of Theorem~\ref{thm:leadership-classification}. 

\begin{proof}[Proof of Corollary~\ref{cor:strict-leadership}]
    Suppose that, with positive probability, for some $a \in [A]$, $\lead_{a}$ occurs but $\slead_{a}$ does not. Then by~\eqref{eq:inf-many-k}, for some $a' \neq a$ there exists infinitely many $k$ such that
    \begin{equation} \label{eq:sum-bound}
        \sum_{j=v_{a'}(0) + 1}^{k} X^{\ssup{a'}}_{j} - \sum_{j=v_{a}(0) + 1}^{k} X^{\ssup{a}}_{j} < X^{\ssup{a}}_{k+1}. 
    \end{equation}
    Now, by Equation~\eqref{eq:borel-cantelli-any-eps}, and the Borel-Cantelli lemma, for any $\eps > 0$ with probability $1$, $X^{\ssup{a}}_{k+1} \leq \eps$ for all but finitely many $k$. Assume, first that $v_{a'}(0) \geq  v_{a}(0)$. Then by~\eqref{eq:sum-bound}, taking limits as $k \to \infty$, the almost surely convergent sum 
    \[
     \sum_{j=v_{a'}(0) + 1}^{\infty} \left(X^{\ssup{a'}}_{j} - X^{\ssup{a}}_{j} \right) + \sum_{j=v_{a}(0) + 1}^{v_{a'}(0)} \left(X^{\ssup{a'}}_{j} - X^{\ssup{a}}_{j} \right)
    \]
    must equal zero, almost surely. But now, as $\sum_{j=1}^{\infty} \Prob{X_{j} - X'_{j} \neq 0} = \infty$, Theorem~\ref{thm:no-atom} and Remark~\ref{rem:symm-constants} imply that the series above has no atom on $\mathbb{R}$, this leads to a contradiction. The case $v_{a'}(0) <  v_{a}(0)$ is similar. 
\end{proof}

Finally, we prove Proposition~\ref{prop:counter}, which shows that monopoly and strict leadership are not, in general, zero-one events. 
\begin{prop} \label{prop:counter}
    We have the following:
    \begin{enumerate}
        \item \label{item:mon-counter} There exists a competing birth process $(v_{a}(\tau_{n}))_{a \in [A], n \in \mathbb{N}_0}$ such that $0 < \Prob{\bigcup_{a \in [A]} \mon_{a}} < 1.$
        \item \label{item:slead-counter} There exists a competing birth process $(v_{a}(\tau_{n}))_{a \in [A], n \in \mathbb{N}_0}$ such that $0 < \Prob{\bigcup_{a \in [A]} \slead_{a}} < 1.$
    \end{enumerate}
\end{prop}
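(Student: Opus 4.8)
The plan is to establish both items with a \emph{single} explicit process $(v_a(\tau_n))_{a\in[A],\,n\in\mathbb{N}_0}$ on $A=2$ agents, engineered so that the events ``monopoly'' and ``strict leadership'' coincide and each has probability exactly $\tfrac12$. Take $v_1(0)=v_2(0)=1$, and let the common law of $(X_j)_{j\ge 1}$ be deterministic off one coordinate: $X_j = 2^{-j}$ for every $j\neq 2$, while $\Prob{X_2 = 1}=\Prob{X_2=2}=\tfrac12$. Then $\sum_{j\ge 1} X_j<\infty$ almost surely, so explosion occurs; with $\sigma(a)=\sum_{j\ge 2} X^{\ssup{a}}_j$ as in~\eqref{eq:sigma-def}, we have $\tau_\infty=\sigma(1)\wedge\sigma(2)$ by~\eqref{eq:explosion-attained}. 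Since $X^{\ssup{1}}_j$ and $X^{\ssup{2}}_j$ agree deterministically for all $j\neq 2$, the event $\{\sigma(1)=\sigma(2)\}$ equals $\{X^{\ssup{1}}_2=X^{\ssup{2}}_2\}$, which by independence has probability $\bigl(\tfrac12\bigr)^2+\bigl(\tfrac12\bigr)^2=\tfrac12$.

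The second step is to read off the behaviour on each of these two events. On $\{\sigma(1)=\sigma(2)\}$ the two agents have identical increment sequences, so $v_1(t)=v_2(t)$ for all $t\ge 0$, and in particular $v_1(\tau_n)=v_2(\tau_n)$ for every $n$; hence there is no unique leader, so strict leadership fails (and therefore so does monopoly). On the complement, say on $\{\sigma(1)<\sigma(2)\}$, agent $1$'s value tends to infinity as $t\uparrow\sigma(1)=\tau_\infty$, so $v_1(\tau_n)\to\infty$, whereas $v_2(\tau_n)\le v_2(\sigma(1))<\infty$ is non-decreasing and integer-valued, hence eventually constant; thus $\mon_2$ occurs, i.e.\ monopoly holds, and with it strict leadership (with leader $a=1$). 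The case $\{\sigma(2)<\sigma(1)\}$ is symmetric. Consequently $\{\text{monopoly}\}=\{\text{strict leadership}\}=\{\sigma(1)\neq\sigma(2)\}$ for this process, an event of probability $\tfrac12\in(0,1)$, which proves Items~\ref{item:mon-counter} and~\ref{item:slead-counter}. As a consistency check, note that $\sum_{j\ge 1}(X_j-X'_j)=X_2-X'_2$ converges, so by Item~\ref{item:leadership} of Theorem~\ref{thm:leadership-classification} leadership \emph{does} hold almost surely here; it is precisely the uniqueness clause in Definition~\ref{def:leadership-class} that is destroyed on $\{\sigma(1)=\sigma(2)\}$.

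There is no genuinely hard step: the example is built so that its only source of randomness, $X_2$, either makes the two trajectories identical for all time (destroying uniqueness) or leaves agent $2$ at a finite value at the explosion time (forcing monopoly of agent $1$). The one point that deserves a word of care is the claim that, on $\{\sigma(1)=\sigma(2)\}$, equality of the single random increment makes the \emph{whole} trajectories --- and hence the values $v_a(\tau_n)$ read along the stopping sequence --- identical; this is immediate here only because all remaining increments are deterministic and shared. If one instead wants a process that simultaneously separates leadership from both monopoly and strict leadership, the same idea works with a small variant, checked the same way: take $v_1(0)=2>1=v_2(0)$, $X_j=1$ for all $j\neq 2$, and $\Prob{X_2=\tfrac12}=\Prob{X_2=\tfrac32}=\tfrac12$. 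Here $\sum_j X_j=\infty$ almost surely (no explosion) and leadership always holds, while a direct examination of how the two agents' jump times interleave shows that agent $1$ satisfies $v_1(\tau_n)>v_2(\tau_n)$ for all $n$ when $X^{\ssup{2}}_2=\tfrac32$, and $v_1(\tau_n)=v_2(\tau_n)$ for infinitely many $n$ when $X^{\ssup{2}}_2=\tfrac12$, so strict leadership again has probability $\tfrac12$.
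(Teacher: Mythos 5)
Your construction is correct and rests on the same mechanism as the paper's proof: with positive probability the two agents' waiting-time sequences coincide identically (forcing a perpetual tie, so no monopoly or strict leadership), and with positive probability they separate (forcing one agent to explode strictly first, giving both). The only difference is cosmetic — you handle both items with a single example, whereas the paper uses one example with random explosion times for monopoly and a separate geometric example for strict leadership.
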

\begin{proof}[Proof of Proposition~\ref{prop:counter}]
For both examples we assume, for simplicity, that $N = 2$ and $v_{1}(0) = v_{2}(0) = 0$. For Item~\ref{item:mon-counter} of Proposition~\ref{prop:counter}, we choose the values $(X_{j})_{j \in \mathbb{N}}$ such that each $X_{j}$ is concentrated on two values, with 
\[
    X_{j} = \begin{cases}
        2^{-j} & \text{ with probability $1- \frac{1}{(j+1)^2}$} \\
        0 & \text{otherwise}.
    \end{cases}
\]
Then with $\sigma(1), \sigma(2)$ as defined in~\eqref{eq:sigma-def}, note that $\sigma(a) \leq 1$ almost surely, for $a \in \{1, 2\}$. Following similar logic to the proof of Item~\ref{item:monopoly} of Theorem~\ref{thm:leadership-classification}, we have 
\begin{linenomath}
\begin{align*}
\Prob{\mon_{1}} & \geq \Prob{\sigma(1) = \frac{1}{2}, \sigma(2) = 1} \\ & \geq \Prob{\left\{X^{\ssup{1}}_{1} = 0 \right\}\cap \left(\bigcap_{j=2}^{\infty} \left\{X^{\ssup{1}}_{j} = 2^{-j}\right\} \right)} \Prob{\bigcap_{j=1}^{\infty} \left\{ X^{\ssup{2}}_{j} = 2^{-j}\right\}}
\\ & = \left(\frac{1}{4} \prod_{j=2}^{\infty} \left(1 - \frac{1}{(j+1)^2}\right)\right) \prod_{j=1}^{\infty} \left(1 - \frac{1}{(j+1)^2}\right) > 0
\end{align*}
\end{linenomath}
where we have used the independence of the values $(X^{\ssup{a}}_{j})_{a \in \{1,2\}, j \in \mathbb{N}}$. The last inequality follows from the fact that the infinite products consist of non-zero terms and $\sum_{j=1}^{\infty} \frac{1}{(j+1)^2} < \infty$. 
On the other hand, by similar computations, we also have 
\begin{linenomath}
\begin{align*}
\Prob{\left(\bigcup_{a=1}^{2} \mon_{a}\right)^c} = \Prob{\sigma(1) = \sigma(2)} & \geq \Prob{\sigma(1) = 1, \sigma(2) = 1} \\ & \geq \Prob{\bigcap_{j=2}^{\infty} \left\{X^{\ssup{1}}_{j} = 2^{-j}\right\}} \Prob{\bigcap_{j=1}^{\infty} \left\{ X^{\ssup{2}}_{j} = 2^{-j}\right\}}
\\ & = \left(\prod_{j=1}^{\infty} \left(1 - \frac{1}{(j+1)^2}\right)\right)^2 > 0.
\end{align*}
\end{linenomath}
Item~\ref{item:mon-counter} of Proposition~\ref{prop:counter} follows. 

For Item~\ref{item:slead-counter} of Proposition~\ref{prop:counter}, suppose that each $X_{j}$ takes values in $\mathbb{N}$.  More concretely, suppose $X_{j} \sim \Geom{p_{j}}$ for $p_{j} \in (0,1)$, where $\Geom{p_{j}}$ denotes the geometric distribution with parameter $p_{j}$. For leadership, by Item~\ref{item:leadership} of Theorem~\ref{thm:leadership-classification}, we need $\sum_{j=1}^{\infty}(X_{j} - X'_{j}) < \infty$. Since, in this case,  $\Prob{|X_{j} - X'_{j}| \geq 1} = \Prob{X_{j} \neq X'_{j}}$, by the Kolmogorov three series theorem, it must be the case that 
\begin{equation} \label{eq:another-conv}
\sum_{j=1}^{\infty} \Prob{X_{j} \neq X'_{j}} < \infty. 
\end{equation}
Suppose we choose $p_{j}$ to satisfy~\eqref{eq:another-conv}, but also with each $\Prob{X^{\ssup{1}}_{j} = X^{\ssup{2}}_{j}} > 0$ (e.g. $p_{j} := (1 - \frac{1}{(j+1)^2})$). Then this implies that
\begin{equation} \label{eq:slead-1}
\Prob{\left(\bigcup_{a=1}^{2} \slead_{a}\right)^c} \geq \Prob{\forall j \in \mathbb{N} \; X^{\ssup{1}}_{j} = X^{\ssup{2}}_{j}} =  \prod_{j=1}^{\infty} \left(1 - \Prob{X^{\ssup{1}}_{j} \neq X^{\ssup{2}}_{j}} \right) > 0, 
\end{equation}
since $\sum_{j=1}^{\infty} \Prob{X^{\ssup{1}}_{j} \neq X^{\ssup{2}}_{j}} < \infty$ and each $\Prob{X^{\ssup{1}}_{j} \neq X^{\ssup{2}}_{j}} > 0$. On the other hand, 
  \begin{linenomath}
  \begin{align*}
   \Prob{\slead_{1}} & \geq \Prob{X^{\ssup{1}}_{1} < X^{\ssup{2}}_{1}} \Prob{\forall j \geq 2 \;  X^{\ssup{1}}_{j} = X^{\ssup{2}}_{j}} \\ & = \Prob{X^{\ssup{1}}_{1} < X^{\ssup{2}}_{1}} \prod_{j=2}^{\infty} \left(1 - \Prob{X_{j} \neq X'_{j}} \right) > 0, 
   \end{align*}
   \end{linenomath}
where the final inequalities use similar logic to~\eqref{eq:slead-1}. Item~\ref{item:slead-counter} of Proposition~\ref{prop:counter} follows. 
\end{proof}

\begin{rmq}
Suppose that all of the $X_{j}$ are concentrated on a set where the distance between any two points is bounded away from $0$, and, for all $a, a' \in [A]$ $v_{a}(0) = v_{a'}(0)$. By using similar ideas to the proof of Item~\ref{item:slead-counter} of Proposition~\ref{prop:counter} one can show that, if $\Prob{\bigcup_{a \in [A]} \lead_{a}} = 1$, then $0 < \Prob{\bigcup_{a \in [A]} \slead_{a}} < 1$. 
\end{rmq}

% \begin{enumerate}
%    \item \label{item:expectations} For each $j \in \mathbb{N}$ we have $\E{X_{j}} < \infty$, but $\sum_{j=1}^{\infty} X_{j} = \infty$ almost surely. 
%     \item \label{item:constant-kolm-three-series}  $\sum_{j=1}^{\infty} \Prob{\left|S_{j}\right| > C} < \infty$, 
%     \item $\sum_{j=1}^{\infty} \E{S_{j} \mathbf{1}_{\left|S_{j}\right| \leq C}} < \infty$, and
 %    \item $\sum_{j=1}^{\infty} \var{S_{j} \mathbf{1}_{\left|S_{j}\right| \leq C}} < \infty$.
 %\end{enumerate}
\subsection{Proof of Theorem~\ref{thm:balls-in-bins}}
We first relate the balls-in-bins process to a particular case of the process $(v_{a}(\tau_{n}))_{a \in [A], n \in \mathbb{N}_{0}}$. Suppose that $(F(j))_{j \in \mathbb{N}_{0}}$ is the feedback function associated with the balls-in-bins process. 
 Suppose $(X_{j})_{j \in \mathbb{N}}$ denotes a sequence of random variables with mixed distribution
\begin{equation}\label{eq:mixed-exp}
    X_{j} \sim \Exp{F(j-1)}. 
\end{equation}

\begin{prop} \label{prop:cont-equivalence}
    Suppose $(u_{a}(n))_{a \in [A], n \in \mathbb{N}_{0}}$ denotes a balls-in-bins process, with associated function $f$, and $(v_{a}(\tau_{n}))_{a \in [A], n \in \mathbb{N}_{0}}$ denotes a competing growth process, with random variables $(X_{i})_{i \in \mathbb{N}}$ distributed as~\eqref{eq:mixed-exp}. Suppose also that $(u_{a}(0))_{a \in [A]} = (v_{a}(0))_{a \in [A]}$, i.e., both have the same initial conditions. Then \[(u_{a}(n))_{a \in [A], n \in \mathbb{N}_{0}} \sim (v_{a}(\tau_{n}))_{a \in [A], n \in \mathbb{N}_{0}}.\] 
\end{prop}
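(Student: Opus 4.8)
The plan is to construct both processes on one probability space, conditioning on the feedback sequences and then exploiting the lack-of-memory property of the exponential distribution. First I would realise the whole family $(F_{a}(j))_{a\in[A],\,j\in\mathbb{N}_{0}}$ with its common joint law; this is legitimate on the balls-in-bins side because the family is mutually independent, so there is no difference between sampling $F_{a}(j)$ eagerly and sampling it only when urn $a$ first holds $j$ balls, as in Definition~\ref{def:urn-defn}. Conditionally on this family, realise mutually independent variables $(X^{\ssup{a}}_{j})_{a\in[A],\,j\in\mathbb{N}}$ with $X^{\ssup{a}}_{j}\sim\Exp{F_{a}(j-1)}$ --- exactly what~\eqref{eq:mixed-exp} prescribes --- so that unconditionally each $(X^{\ssup{a}}_{j})_{j}$ is a mutually independent sequence with the law of $(X_{j})_{j}$, and the associated $(v_{a}(\tau_{n}))_{a\in[A],\,n\in\mathbb{N}_{0}}$ is a bona fide competing growth process with the prescribed initial condition. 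I would note that each $\tau_{n}$ is finite, and that almost surely the successive increment times of the various agents are all distinct (a difference of two sums of independent continuous variables with no common term is a.s. nonzero, and there are only countably many such pairs); hence $\sum_{a}v_{a}(t)$ increases by unit jumps and exactly one agent increments at each $\tau_{n}$, so the embedded discrete process is well defined and moves by one increment per step.

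The heart of the argument is to identify, conditionally on $(F_{a}(j))$, the law of the sequence $W_{1},W_{2},\dots\in[A]$ of agents performing successive increments. The first increment is performed by $W_{1}=\arg\min_{a}X^{\ssup{a}}_{v_{a}(0)+1}$; conditionally on $(F_{a}(j))$ these are independent exponential clocks with rates $F_{a}(v_{a}(0))$, so by the standard competing-exponentials computation $W_{1}=a$ with probability $F_{a}(v_{a}(0))/\sum_{a'\in[A]}F_{a'}(v_{a'}(0))$, the time $T_{1}$ of this increment is exponential with rate $\sum_{a'}F_{a'}(v_{a'}(0))$, and $T_{1}$ is independent of $W_{1}$. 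Conditioning further on $(W_{1},T_{1})$, the lack-of-memory property shows that the residual clocks of the agents $a\ne W_{1}$ are again independent exponentials with the same rates $F_{a}(v_{a}(0))$, while the clock of $W_{1}$ is the fresh variable $X^{\ssup{W_{1}}}_{v_{W_{1}}(0)+2}\sim\Exp{F_{W_{1}}(v_{W_{1}}(0)+1)}$, independent of the past; one is thus back in precisely the same situation, with the configuration updated by incrementing $W_{1}$. Iterating this recursion shows that, conditionally on $(F_{a}(j))$, the configuration process evolves as the Markov chain that, from a configuration $(k_{a})_{a\in[A]}$, increments agent $a$ with probability $F_{a}(k_{a})/\sum_{a'}F_{a'}(k_{a'})$ --- exactly the transition rule of Definition~\ref{def:urn-defn}. (Equivalently this step can be phrased through the strong Markov property of the underlying independent-increments structure at the stopping times $\tau_{n}$, but the explicit recursion above avoids invoking it.)

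Finally I would integrate out the feedback: conditionally on $(F_{a}(j))$ both $(v_{a}(\tau_{n}))_{a\in[A],\,n\in\mathbb{N}_{0}}$ and $(u_{a}(n))_{a\in[A],\,n\in\mathbb{N}_{0}}$ are the same Markov chain started from the same deterministic initial condition, hence have the same conditional law; since $(F_{a}(j))$ has the same law on both sides, the two discrete processes have the same unconditional law, which is the claim (up to the elementary shift of the discrete time index relating the two indexing conventions for the running total of values). I expect the main obstacle to be the careful bookkeeping of the conditioning in the second paragraph: one must verify that ``memorylessness at $\tau_{n}$'' is legitimate even though $\tau_{n}$ is a random time and the rates $F_{a}(\cdot)$ are themselves random, which is exactly why the argument is organised as an explicit recursion over successive increments (or via the strong Markov property) rather than a single appeal to memorylessness; the competing-exponentials identities and the index bookkeeping are then routine.
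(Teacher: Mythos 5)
Your proposal is correct and takes essentially the same approach as the paper: condition on the feedback family $(F_{a}(j))$, note the $X^{\ssup{a}}_{j}$ are then independent exponentials, and use memorylessness together with the competing-exponentials identity to show the embedded chain has transition probability $F_{a}(v_{a}(\tau_{n}))/\sum_{a'}F_{a'}(v_{a'}(\tau_{n}))$. The paper phrases the memorylessness step by conditioning on the filtration $\mathcal{F}_{\tau_{n}}$ rather than as an explicit recursion over increment times, but the substance is identical; your added remark about a.s.\ distinct jump times is a small point the paper leaves implicit.
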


The proof of Proposition~\ref{prop:cont-equivalence}, along with Item~\ref{item:balls-monopoly} of Theorem~\ref{thm:balls-in-bins} are straightforward extensions of their analogues where $(F(j))_{j \in \mathbb{N}_0}$ is deterministic (see, e.g., \cite{oliveira-thesis}). However, we include their proofs for completeness. 

We include the proof of Proposition~\ref{prop:cont-equivalence} for completeness.
\begin{proof}[Proof of Proposition~\ref{prop:cont-equivalence}]
    We construct the process $(v_{a}(\tau_{n}))_{a \in [A], n \in \mathbb{N}_{0}}$, and show it has the same distribution as $(u_{a}(n))_{a \in [A], n \in \mathbb{N}_0}$. Note that, by mutual independence of~$(F_{a}(j))_{a \in [A], j \in \mathbb{N}_{0}}$, it makes no difference to the law of the total sequence $(u_{a}(n))_{a \in [A], n \in \mathbb{N}_{0}}$ if these are sampled initially, or iteratively as in Definition~\ref{def:urn-defn}. Therefore, in constructing $(v_{a}(\tau_{n}))_{a \in [A], n \in \mathbb{N}_{0}}$, we start by sampling one layer of randomness: the collection $(F_{a}(j))_{a \in [A], j \in \mathbb{N}_{0}}$. Then conditional on these random variables, the values $(X^{\ssup{a}}_{j})_{a \in [A], j \in \mathbb{N}_{0}}$ are mutually independent, exponential random variables. Now, suppose that $(\mathcal{F}_{t})_{t \geq 0}$ denotes the filtration generated by the process $(v_a(t))_{a \in [A], t \in [0, \infty)}$, given $(F_{a}(j))_{a \in [A], j \in \mathbb{N}_{0}}$. Then:
    \begin{enumerate}
        \item By assumption, we have $(v_{a}(0))_{a \in [A]} = (u_{a}(0))_{a \in [A]}$. 
        \item Recalling the times $(\tau_{n})_{n \in \mathbb{N}}$, note that, for $a \in [N ]$, we have 
        \begin{linenomath}
        \begin{align} \label{eq:memoryless}
        & \nonumber \Prob{v_{a}(\tau_{n+1}) = v_{a}(\tau_{n}) + 1} = \Prob{X^{\ssup{a}}_{v_{a}(\tau_{n})+1} = \tau_{n+1} - \tau_{n}} = \E{\Prob{X^{\ssup{a}}_{v_{a}(\tau_{n})+1} = \tau_{n+1} - \tau_{n} \, \bigg | \, \mathcal{F}_{\tau_{n}}}}
        \\ & \hspace{0.5cm} = \E{\Prob{X^{\ssup{a}}_{v_{a}(\tau_{n})+1} = \min_{a \in [A]} \left\{X^{\ssup{a}}_{v_{a}(\tau_{n})+1} \right\} \, \bigg | \, \mathcal{F}_{\tau_{n}}}} = \Prob{X^{\ssup{a}}_{v_{a}(\tau_{n})+1} = \min_{a \in [A]} \left\{X^{\ssup{a}}_{v_{a}(\tau_{n})+1} \right\}}.
        \end{align}
        \end{linenomath}
    \end{enumerate}
    The third equality follows from the fact that remaining time $\tau_{n+1} - \tau_{n}$ is given by the `minimum time' taken for an agent to increase in value, after the total sum of values is $n$. The fourth equality follows from the memoryless property of the exponential distribution: for each $a \in [A]$, the distribution of $X^{\ssup{a}}_{v_{a}(\tau_{n}) + 1}$ is invariant under conditioning on $\mathcal{F}_{\tau_{n}}$. Indeed, either $\tau_{n} = \sum_{j=1}^{v_{a}(\tau_{n})} X^{\ssup{a}}_{j}$, in which case $X^{\ssup{a}}_{v_{a}(\tau_{n}) + 1}$ is independent of $\mathcal{F}_{\tau_{n}}$, or, $\tau_{n} > \sum_{j=1}^{v_{a}(\tau_{n})} X^{\ssup{a}}_{j}$, hence, by the memoryless property, for any $c \in [0, \infty)$,  
        \[
        \Prob{X^{\ssup{a}}_{v_{a}(\tau_{n}) + 1} > \bigg(\tau_{n} -\sum_{j=1}^{v_{a}(\tau_{n})} X^{\ssup{a}}_{j} \bigg) + c \, \bigg | \, X^{\ssup{a}}_{v_{a}(\tau_{n}) + 1} > \tau_{n} -\sum_{j=1}^{v_{a}(\tau_{n})} X^{\ssup{a}}_{j} } = \Prob{X^{\ssup{a}}_{v_{a}(\tau_{n}) + 1} > c},
        \]
        from which the claim follows. Now, by the properties concerning minima of independent exponential random variables, the right side of~\eqref{eq:memoryless} is equal to 
        \[
        \frac{F_{a}(v_{a}(\tau_{n}))}{\sum_{a=1}^{A} F_{a}(v_{a}(\tau_{n}))}.
        \]
        Thus, given $(F_{a}(j))_{a \in [A], j \in \mathbb{N}_{0}}$, $(v_{a}(\tau_{n}))_{a \in [A], n \in \mathbb{N}_{0}}$ follows the same transition probabilities as the balls-in-bins process $(u_{a}(n))_{a \in [A], n \in \mathbb{N}_{0}}$ as defined in Definition~\ref{def:urn-defn}. The result follows. 
\end{proof}

\begin{proof}[Proof of Item~\ref{item:balls-monopoly} of Theorem~\ref{thm:balls-in-bins}]
    Given Proposition~\ref{prop:cont-equivalence}, it suffices to show that the conditions of Theorem~\ref{thm:leadership-classification} are satisfied for the sequences $(X_{j})_{j \in \mathbb{N}}, (X_{j})_{j \in \mathbb{N}}$ defined according to~\eqref{eq:mixed-exp}. For Item~\ref{item:balls-monopoly}, note that by conditioning in the values of $(F(j))_{j \in \mathbb{N}_0}$, by the formula for the expected value of an exponential random variable, we have
    \[
    \E{\sum_{j=1}^{\infty} X_{i} \, \bigg | \, (F(j))_{j \in \mathbb{N}_{0}}} = \sum_{j=0}^{\infty} 1/F(j) < \infty \quad \text{almost surely,} 
    \]
    hence, by Item~\ref{item:monopoly} of Theorem~\ref{thm:leadership-classification}, $\Prob{\bigcup_{a \in [A]} \mon_{a}} = 1$. For the converse direction suppose 
    \begin{equation} \label{eq:divergence}
    \sum_{j=0}^{\infty} \frac{1}{F(j)} = \infty \quad \text{almost surely.}    
    \end{equation}
     Then by using the formula for the Laplace transform of an exponential random variable, and the inequality $1 - x \leq e^{-x}$, we have 
    \[
    \E{e^{-\sum_{j=1}^{\infty} X_{j}} \, \bigg | \, (F(j))_{j \in \mathbb{N}_{0}}} = \prod_{j=0}^{\infty} \frac{F(j)}{F(j) + 1} = \prod_{j=0}^{\infty} \left(1 -  \frac{1}{F(j) + 1} \right) \leq e^{-\sum_{j=0}^{\infty} \frac{1}{F(j) + 1}} \stackrel{\eqref{eq:divergence}}{=} 0,
    \]
    almost surely. This implies that $\sum_{j=1}^{\infty} X_{j} = \infty$ almost surely, hence, again, by Item~\ref{item:monopoly} of Theorem~\ref{thm:leadership-classification}, that $\Prob{\bigcup_{a \in [A]} \mon_{a}} = 0$. 
\end{proof}

For Item~\ref{item:balls-leadership} of Theorem~\ref{thm:balls-in-bins} we need an additional proposition and a lemma. Proposition~\ref{prop:three-series-positive} is a strengthening of the Kolmogorov three series theorem when the summands are non-negative. 

\begin{prop} \label{prop:three-series-positive}
    Suppose that $(S_{j})_{j \in \mathbb{N}}$ is a sequence of mutually independent random variables taking values in $[0, \infty)$. Let $C > 0$ be given. Then $\sum_{j=1}^{\infty} S_{j} < \infty$ if and only if 
    \begin{equation} \label{eq:kolmogorov-three-series-positive}
        \sum_{j=1}^{\infty} \Prob{S_{j} > C} < \infty \quad \text{ and } \quad \sum_{j=1}^{\infty} \E{S_{j} \mathbf{1}_{S_{j} \leq C}} < \infty. 
    \end{equation}
\end{prop}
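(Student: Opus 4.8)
The plan is to prove the two implications of~\eqref{eq:kolmogorov-three-series-positive} separately; only the ``only if'' direction will use mutual independence.

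For sufficiency, suppose both series in~\eqref{eq:kolmogorov-three-series-positive} converge. Since every $S_{j}$ is non-negative, Tonelli's theorem gives $\E{\sum_{j=1}^{\infty} S_{j} \mathbf{1}_{S_{j} \leq C}} = \sum_{j=1}^{\infty} \E{S_{j} \mathbf{1}_{S_{j} \leq C}} < \infty$, so that $\sum_{j=1}^{\infty} S_{j} \mathbf{1}_{S_{j} \leq C} < \infty$ almost surely. On the other hand, $\sum_{j=1}^{\infty} \Prob{S_{j} > C} < \infty$ together with the first Borel--Cantelli lemma shows that, almost surely, $S_{j} \leq C$ for all but finitely many $j$; hence $\sum_{j=1}^{\infty} S_{j} \mathbf{1}_{S_{j} > C}$ is a finite sum of almost surely finite terms, and so is finite almost surely. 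Adding the two contributions, $\sum_{j=1}^{\infty} S_{j} < \infty$ almost surely. (No independence is needed here.)

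For necessity, assume $\sum_{j=1}^{\infty} S_{j} < \infty$ almost surely. If $\sum_{j=1}^{\infty} \Prob{S_{j} > C} = \infty$, then the second Borel--Cantelli lemma (here using mutual independence) forces $S_{j} > C$ infinitely often almost surely, hence $\sum_{j=1}^{\infty} S_{j} = \infty$ almost surely, a contradiction; so the first series in~\eqref{eq:kolmogorov-three-series-positive} converges. With that in hand, the Borel--Cantelli argument from the previous paragraph again shows that $\sum_{j=1}^{\infty} S_{j}\mathbf{1}_{S_{j} > C}$ has only finitely many nonzero terms almost surely, so the truncated variables $T_{j} := S_{j} \mathbf{1}_{S_{j} \leq C}$, which are mutually independent and take values in $[0, C]$, satisfy $\sum_{j=1}^{\infty} T_{j} < \infty$ almost surely. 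It remains to extract $\sum_{j=1}^{\infty} \E{T_{j}} < \infty$, which is the only substantive step. For this one may simply apply the Kolmogorov three series theorem to $(T_{j})_{j \in \mathbb{N}}$ with the same constant $C$: since $0 \leq T_{j} \leq C$, its first series is identically zero and $T_{j}\mathbf{1}_{|T_{j}| \leq C} = T_{j}$, so its second series is exactly $\sum_{j=1}^{\infty}\E{T_{j}}$, which must therefore converge. Alternatively, and self-containedly: by independence $\E{e^{-\sum_{j} T_{j}}} = \prod_{j=1}^{\infty} \E{e^{-T_{j}}}$, and since $\sum_{j} T_{j} < \infty$ almost surely the left-hand side is strictly positive, forcing $\sum_{j=1}^{\infty}\bigl(1 - \E{e^{-T_{j}}}\bigr) < \infty$; as $t \mapsto (1-e^{-t})/t$ is non-increasing on $(0, C]$ we obtain the pointwise bound $1 - e^{-T_{j}} \geq \frac{1 - e^{-C}}{C}\, T_{j}$, whence $\sum_{j=1}^{\infty}\E{T_{j}} \leq \frac{C}{1-e^{-C}} \sum_{j=1}^{\infty} \bigl(1 - \E{e^{-T_{j}}}\bigr) < \infty$.

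I do not expect a genuine obstacle: the proposition is a mild strengthening of the three series theorem, and the point is that for non-negative summands bounded by $C$ the variance series is automatically controlled, $\Var(T_{j}) \leq \E{T_{j}^{2}} \leq C\,\E{T_{j}}$, so once $\sum_{j}\E{T_{j}} < \infty$ the third Kolmogorov series converges for free. The only place calling for a little care is the event that $S_{j} > C$ infinitely often, but the two Borel--Cantelli lemmas dispose of it cleanly in the two directions respectively.
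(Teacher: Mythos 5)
Your proof is correct, but it takes a genuinely different (and more elementary) route for the sufficiency direction. The paper shows that if both series in~\eqref{eq:kolmogorov-three-series-positive} converge, then the third Kolmogorov series converges automatically, via the Bhatia--Davis inequality $\var{Y} \leq (M-\E{Y})(\E{Y}-m)$ applied to $Y = S_j \mathbf{1}_{S_j \leq C} \in [0,C]$, yielding $\var{S_j \mathbf{1}_{S_j \leq C}} \leq C\,\E{S_j \mathbf{1}_{S_j \leq C}}$, and then invokes the three series theorem. You instead bypass the three series theorem entirely for this direction: Tonelli gives $\E{\sum_j S_j \mathbf{1}_{S_j \leq C}} = \sum_j \E{S_j \mathbf{1}_{S_j \leq C}} < \infty$ so the truncated sum is a.s.\ finite, and first Borel--Cantelli handles the finitely many terms exceeding $C$. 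As you note, this does not even use independence, which the paper's route does (implicitly, through the three series theorem). For the necessity direction both approaches ultimately rest on the ``only if'' part of the Kolmogorov three series theorem; the paper applies it directly to $(S_j)$ to say that divergence of either of the first two series forces $\sum S_j = \infty$, whereas you re-derive the first series' convergence via second Borel--Cantelli and then apply the three series theorem to the bounded truncations $T_j$. Your self-contained alternative via the Laplace transform ($\E{e^{-\sum_j T_j}} = \prod_j \E{e^{-T_j}} > 0$, then the pointwise bound $1 - e^{-t} \geq \frac{1-e^{-C}}{C}t$ on $(0,C]$) is also correct and is a nice way to avoid citing the three series theorem for that step. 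One small aside: the Bhatia--Davis bound in the paper could be replaced by the cruder $\var{Y} \leq \E{Y^2} \leq C\,\E{Y}$, which you observe at the end; it does the same job here.
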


\begin{proof}[Proof of Proposition~\ref{prop:three-series-positive}]
    Suppose that, for some $C > 0$, \[\text{ either \quad $\sum_{j=1}^{\infty} \Prob{S_{j} > C} = \infty$ \quad or \quad  $\sum_{j=1}^{\infty} \E{S_{j} \mathbf{1}_{S_{j} \leq C}} = \infty$.} \] Then we already know from the Kolmogorov three series theorem that $\sum_{j=1}^{\infty} S_{j} = \infty$ almost surely. For the other direction, we use the following theorem from~\cite{bhatia-davis}.  
    \begin{clm}[{\cite[Theorem~1]{bhatia-davis}}] \label{clm:bhatia-davis}
        Let $Y$ be a random variable taking values in $[m, M]$. Then \[\var{Y} \leq (M-\E{Y})(\E{Y} - m).\]    
    \end{clm} 
    The random variable $S_{j} \mathbf{1}_{S_{j} \leq C}$ takes values in $[0, C]$, and clearly $\E{S_{j} \mathbf{1}_{S_{j}}} \geq 0$. Therefore, by Claim~\ref{clm:bhatia-davis}, 
    \[
    \var{S_{j}\mathbf{1}_{S_{j} \leq C}} \leq C\E{S_{j}\mathbf{1}_{S_{j} \leq C}}, 
    \]
    thus, $\sum_{j=1}^{\infty} \E{S_{j} \mathbf{1}_{S_{j} \leq C}} < \infty$ implies that $\sum_{j=1}^{\infty} \var{S_{j}\mathbf{1}_{S_{j} \leq C}} < \infty$. The result, therefore, follows from the Kolmogorov three series theorem.  
\end{proof}

\begin{prop} \label{prop:first-equivalence}
        For the balls-in-bins model with feedback $(u_{a}(n))_{a \in [A], n \in \mathbb{N}_{0}}$, defined in Theorem~\ref{thm:balls-in-bins}, we have $\Prob{\bigcup_{a \in [A]} \slead_{a}} \in \{0, 1\}$ with $\Prob{\bigcup_{a \in [A]} \slead_{a}} = 1$ if and only if for all $\epstwo > 0$ 
    \begin{equation} \label{eq:first-fitness-ass}
    \sum_{j=0}^{\infty} \Prob{F(j) \leq \epstwo} < \infty \quad \text{ and } \quad \E{\sum_{j=0}^{\infty} \frac{1}{F(j)^2} \mathbf{1}_{\left\{F(j) > \epstwo\right\}}} < \infty. 
    \end{equation}
\end{prop}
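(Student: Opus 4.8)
The plan is to transfer the statement, via Proposition~\ref{prop:cont-equivalence}, to the competing growth process driven by $X_j\sim\Exp{F(j-1)}$, and then to read off strict leadership from Theorem~\ref{thm:leadership-classification} and Corollary~\ref{cor:strict-leadership}. Everything will rest on the equivalence, call it $(\star)$, that the series $\sum_j(X_j-X'_j)$ converges almost surely if and only if, for every $\eps>0$, both $\sum_j\Prob{F(j)\le\eps}<\infty$ and $\E{\sum_j F(j)^{-2}\mathbf 1_{\{F(j)>\eps\}}}<\infty$; equivalently, by Item~\ref{item:leadership} of Theorem~\ref{thm:leadership-classification}, that leadership holds almost surely precisely under these two conditions. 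Granting $(\star)$ the proposition follows quickly. If the two displayed conditions hold for every $\eps$, then by $(\star)$ the series converges a.s.; moreover $\Prob{X_j=X'_j}=0$ for each $j$ --- conditionally on the two independent copies of the feedback sequence, $X_j$ and $X'_j$ are independent and atomless --- so the first alternative in~\eqref{eq:app-no-atom-cond} holds; and $\sum_j\Prob{X_j>\eps}<\infty$ for every $\eps$ follows from the elementary bound $x^2e^{-\eps x}\le\kappa(\eps):=4e^{-2}\eps^{-2}$ via the splitting $\Prob{X_j>\eps}=\E{e^{-\eps F(j-1)}}\le\Prob{F(j-1)\le\delta}+\kappa(\eps)\E{F(j-1)^{-2}\mathbf 1_{\{F(j-1)>\delta\}}}$. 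Corollary~\ref{cor:strict-leadership} then yields strict leadership almost surely. If instead the displayed conditions fail for some $\eps$, then by $(\star)$ the series diverges a.s., so leadership --- and hence strict leadership, since $\slead_a\subseteq\lead_a$ --- occurs with probability zero by Item~\ref{item:leadership} of Theorem~\ref{thm:leadership-classification}. In both cases $\Prob{\bigcup_a\slead_a}\in\{0,1\}$, equal to $1$ exactly when the conditions hold, which is the assertion.

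To prove $(\star)$ I would apply the Kolmogorov three-series theorem to the symmetric summands $S_j:=X_j-X'_j$ with truncation level $C=1$; the ``mean'' series vanishes identically by symmetry, so only $\sum_j\Prob{|S_j|>1}$ and $\sum_j\Var(S_j\mathbf 1_{\{|S_j|\le1\}})$ are at stake. For the ``if'' direction, $\Prob{|S_j|>1}\le2\Prob{X_j>1}=2\E{e^{-F(j-1)}}$ is summable by the bound above, while $\Var(S_j\mathbf 1_{\{|S_j|\le1\}})\le\E{S_j^2\mathbf 1_{\{|S_j|\le1\}}}$ is split according to whether $F(j-1)\wedge F'(j-1)>\eps$: on the complementary event the integrand is at most $\mathbf 1_{\{F(j-1)\le\eps\}}+\mathbf 1_{\{F'(j-1)\le\eps\}}$, summable by the first condition, while on $\{F(j-1)\wedge F'(j-1)>\eps\}$ one uses $\E{S_j^2\mid\mathcal G}=2F(j-1)^{-2}+2F'(j-1)^{-2}-2F(j-1)^{-1}F'(j-1)^{-1}\le2F(j-1)^{-2}+2F'(j-1)^{-2}$, where $\mathcal G$ is generated by the two feedback sequences, together with the second condition and Tonelli's theorem. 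For the ``only if'' direction, conditioning on $\mathcal G$ gives $\Prob{|S_j|>1\mid\mathcal G}=\tfrac{F'(j-1)}{F(j-1)+F'(j-1)}e^{-F(j-1)}+\tfrac{F(j-1)}{F(j-1)+F'(j-1)}e^{-F'(j-1)}$, which is at least $\tfrac12e^{-\eps}$ on $\{F(j-1)\le\eps\}$; summability of $\sum_j\Prob{|S_j|>1}$ then forces $\sum_j\Prob{F(j)\le\eps}<\infty$ for every $\eps$, whence $F(j)\to\infty$ almost surely and $\Prob{F(j)>M}\to1$ for each $M$. Finally $\Var(S_j\mathbf 1_{\{|S_j|\le1\}})\ge\E{\Var(S_j\mathbf 1_{\{|S_j|\le1\}}\mid\mathcal G)}$, combined with the key estimate below, yields $\sum_j\E{F(j)^{-2}\mathbf 1_{\{F(j)>M_0\}}}<\infty$ for a suitable $M_0$, and this upgrades to all $\eps$ via $\E{F(j)^{-2}\mathbf 1_{\{F(j)>\eps\}}}\le\eps^{-2}\Prob{F(j)\le M_0}+\E{F(j)^{-2}\mathbf 1_{\{F(j)>M_0\}}}$ and the first condition.

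The main obstacle is the remaining variance estimate: there is a universal $M_0$ such that for all $g,g'>M_0$, if $\tilde X\sim\Exp g$ and $\tilde X'\sim\Exp{g'}$ are independent, then $\Var(\,(\tilde X-\tilde X')\mathbf 1_{\{|\tilde X-\tilde X'|\le1\}}\,)\ge\tfrac12(g^{-2}+g'^{-2})$. Intuitively the truncation at level $1$ perturbs the variance only on $\{|\tilde X-\tilde X'|>1\}$, whose probability and second-moment contribution are negligible relative to $\Var(\tilde X-\tilde X')=g^{-2}+g'^{-2}$ once $g,g'$ are large, because $\int_{g/2}^{\infty}u^2e^{-u}\,du\to0$; making this quantitative --- bounding $\E{(\tilde X-\tilde X')^2\mathbf 1_{\{|\tilde X-\tilde X'|>1\}}}$ and the relevant covariance by a small multiple of $g^{-2}+g'^{-2}$ and then fixing $M_0$ --- is elementary but fiddly, and it is precisely here that the summability of $\sum_j1/F(j)^2$ enters through the second displayed condition. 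The rest is bookkeeping with Tonelli's theorem and Borel--Cantelli.
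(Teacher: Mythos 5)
Your plan matches the paper's at the top level: reduce to the competing growth process with $X_j\sim\Exp{F(j-1)}$ via Proposition~\ref{prop:cont-equivalence}, establish the equivalence between a.s.\ convergence of $\sum_j(X_j-X'_j)$ and the two displayed conditions, verify the hypotheses of Corollary~\ref{cor:strict-leadership} for sufficiency (including $\Prob{X_j=X'_j}=0$ and summability of $\Prob{X_j>\eps}$), and deduce necessity from Item~\ref{item:leadership} of Theorem~\ref{thm:leadership-classification}. The zero--one dichotomy also falls out the same way.

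Where you diverge from the paper is in the choice of truncation inside the Kolmogorov three-series argument. You use the canonical cut $\mathbf 1_{\{|S_j|\le 1\}}$, whereas the paper truncates on the feedback values, writing $\tilde S_j=(X_j-X'_j)\mathbf 1_{\{F(j-1),F'(j-1)>\epstwo\}}$ and disposing of the complementary terms by Borel--Cantelli. With the paper's truncation the conditional second moment of $\tilde S_j$ is \emph{exact} by Lemma~\ref{lem:diff-exp-three-series} (Equation~\eqref{eq:second-moment-2}), so for sufficiency one invokes the two-series theorem directly with no bookkeeping about what the $|S_j|\le C$ cut removes; and for necessity the lower bound $\E{\tilde S_j^2}\ge\E{F(j-1)^{-2}\mathbf 1_{F(j-1)>\epstwo}}\Prob{F'(j-1)>\epstwo}$ is a one-line variance non-negativity argument. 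Your version buys nothing here and costs you the remaining ``elementary but fiddly'' estimate that $\Var\bigl((\tilde X-\tilde X')\mathbf 1_{\{|\tilde X-\tilde X'|\le1\}}\bigr)\ge\tfrac12(g^{-2}+g'^{-2})$ for $g,g'$ large; this is true (the untruncated variance is exactly $g^{-2}+g'^{-2}$ and both $\E{Z^2\mathbf 1_{|Z|>1}}$ and $\E{Z\mathbf 1_{|Z|>1}}$ decay like $e^{-(g\wedge g')}$, while the target is only polynomially small), but it does need to be written out. I note that the paper's necessity step is itself slightly elliptical: from $\sum_j\E{\tilde S_j^2}=\infty$ it cites ``the Kolmogorov three-series theorem'' to conclude divergence, and for unbounded symmetric summands this requires observing that $\E{\tilde S_j^2\mathbf 1_{\{|\tilde S_j|\le 1\}}\mid\mathcal G}\ge q(\epstwo)\,\E{\tilde S_j^2\mid\mathcal G}$ on $\{F(j-1),F'(j-1)>\epstwo\}$, which follows from the monotonicity of $q$ in Equation~\eqref{eq:second-moment}. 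So both routes ultimately rest on a comparable lower bound for the truncated second moment; the paper packages it through the closed-form $q$-expressions, you would supply it by hand. To complete your writeup you should carry out that estimate, or, more economically, switch to the paper's feedback-indexed truncation and quote Lemma~\ref{lem:diff-exp-three-series} (with the $q(\epstwo)$ remark above made explicit).
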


We are now ready to prove Item~\ref{item:balls-leadership} of Theorem~\ref{thm:balls-in-bins}, we then prove Proposition~\ref{prop:first-equivalence} afterwards. 

\begin{proof}[Proof of Item~\ref{item:balls-leadership} of Theorem~\ref{thm:balls-in-bins}]
By Propostion~\ref{prop:first-equivalence}, we need only prove that the almost sure convergence of $\sum_{j=0}^{\infty} \frac{1}{F(j)^2}$ is equivalent to~\eqref{eq:first-fitness-ass}. But if Equation~\eqref{eq:first-fitness-ass} is satisfied for all $\epstwo > 0$, it is, in particular, the case that, for all $\epstwo > 0$
    \begin{equation} \label{eq:equiv-two}
    \sum_{j=0}^{\infty} \Prob{F(j) \leq \sqrt{\epstwo}} = \sum_{j=0}^{\infty} \Prob{\frac{1}{F(j)^2} \geq \epstwo} < \infty \quad \text{ and } \quad \E{\sum_{j=1}^{\infty} \frac{1}{F(j)^2} \mathbf{1}_{\left\{F(j) > \epstwo\right\}}} < \infty. 
    \end{equation}
By Proposition~\ref{prop:three-series-positive}, with $C = \epstwo$, Equation~\eqref{eq:equiv-two} implies that $\sum_{j=0}^{\infty} \frac{1}{F(j)^2} < \infty$ almost surely. On the other hand, again by Proposition~\ref{prop:three-series-positive}, if $\sum_{j=0}^{\infty} \frac{1}{F(j)^2} = \infty$ almost surely, for some $\epstwo > 0$ one of the two series in~\eqref{eq:equiv-two} must diverge, hence, for a possibly different $\epstwo$ one of the two series in~\eqref{eq:first-fitness-ass} must diverge. 
\end{proof}

Finally, to prove Proposition~\ref{prop:first-equivalence}, we use the following elementary lemma:
\begin{lem} \label{lem:diff-exp-three-series}
    Suppose that $Y$ and $Y'$ are exponentially distributed, with parameters $r, r'$ respectively. Then, for any $C > 0$, 
    \begin{equation} \label{eq:tail-z}
       \Prob{|Z| > C} = \frac{r'}{r+r'} e^{-rc} + \frac{r}{r+r'} e^{-r'c}, 
    \end{equation} and
    \begin{equation} \label{eq:second-moment}
    \E{Z^2 \mathbf{1}_{|Z| \leq C}} = \frac{2}{r+ r'} \left(\frac{r'q(Cr)}{r^2} + \frac{r q(Cr')}{(r')^2} \right)
   \quad \text{ where } \quad q(x) = 1 - e^{-x}(1+x + x^2/2).  
    %\frac{2r'}{r + r'} \left(\frac{1-e^{-Cr}(1 + cr + (cr)^2/2)}{r^2}\right) + \frac{2r}{r+r'}\left(\frac{1-e^{-Cr'}(1 + cr' + (cr')^2/2)}{(r')^2} \right).
    \end{equation}
    In particular, 
    \begin{equation} \label{eq:second-moment-2}
    \E{Z^2} = \frac{2}{r+ r'} \left(\frac{r'}{r^2} + \frac{r}{(r')^2} \right).  
    %\frac{2r'}{r + r'} \left(\frac{1-e^{-Cr}(1 + cr + (cr)^2/2)}{r^2}\right) + \frac{2r}{r+r'}\left(\frac{1-e^{-Cr'}(1 + cr' + (cr')^2/2)}{(r')^2} \right).
    \end{equation}
\end{lem}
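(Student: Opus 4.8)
The plan is a direct computation based on the law of $Z := Y - Y'$. First I would obtain its density by convolution: since $Y \sim \Exp{r}$ and $Y' \sim \Exp{r'}$ are independent, for $z \ge 0$
\[
f_{Z}(z) = \int_{0}^{\infty} r e^{-r(z+t)}\, r' e^{-r' t}\, \dd t = \frac{r r'}{r+r'}\, e^{-rz},
\]
while for $z < 0$ the analogous computation (restricting to $t \ge -z$) gives $f_{Z}(z) = \frac{r r'}{r+r'}\, e^{r' z}$. Thus $Z$ has an asymmetric two-sided exponential law: it decays at rate $r$ to the right of the origin and at rate $r'$ to the left, carrying total mass $r'/(r+r')$ on the positive half-line and $r/(r+r')$ on the negative half-line (which, incidentally, re-proves that these are the probabilities that $Y < Y'$ resp. $Y > Y'$).

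Next, for~\eqref{eq:tail-z} I would simply integrate the two tails (reading $c$ as $C$):
\[
\Prob{Z > C} = \frac{r r'}{r+r'} \int_{C}^{\infty} e^{-rz}\, \dd z = \frac{r'}{r+r'}\, e^{-rC}, \qquad \Prob{Z < -C} = \frac{r r'}{r+r'}\int_{-\infty}^{-C} e^{r' z}\, \dd z = \frac{r}{r+r'}\, e^{-r' C},
\]
and add them. For~\eqref{eq:second-moment} the key elementary identity is the incomplete second moment of an exponential: two integrations by parts give, for any $\lambda > 0$,
\[
\int_{0}^{C} z^{2} e^{-\lambda z}\, \dd z = \frac{2}{\lambda^{3}}\Bigl(1 - e^{-\lambda C}\bigl(1 + \lambda C + \tfrac{(\lambda C)^{2}}{2}\bigr)\Bigr) = \frac{2}{\lambda^{3}}\, q(\lambda C),
\]
with $q$ as in the statement. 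Applying this with $\lambda = r$ on $[0,C]$ and, after the substitution $z \mapsto -z$, with $\lambda = r'$ on $[-C,0]$, and multiplying by the common prefactor $rr'/(r+r')$, yields
\[
\E{Z^{2}\mathbf{1}_{|Z| \le C}} = \frac{r r'}{r+r'}\Bigl(\frac{2}{r^{3}} q(rC) + \frac{2}{(r')^{3}} q(r' C)\Bigr) = \frac{2}{r+r'}\Bigl(\frac{r' q(rC)}{r^{2}} + \frac{r q(r'C)}{(r')^{2}}\Bigr),
\]
which is~\eqref{eq:second-moment}. Finally,~\eqref{eq:second-moment-2} follows by letting $C \to \infty$ and using $q(x)\to 1$, or equivalently by monotone convergence applied to $Z^{2}\mathbf{1}_{|Z|\le C}$.

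There is no real obstacle here; the computation is routine. The only points that demand a little care are pinning down the two-sided density correctly — which decay rate governs which half-line, and the accompanying weights — and the bookkeeping in the incomplete integrals defining $q$. An alternative that avoids writing $f_{Z}$ explicitly would be to condition on $Y' = t$, apply the exponential tail and truncated-moment formulas for $Y$ on the shifted interval, and integrate out $t$; this reaches the same identities but is messier, so I would keep to the convolution approach above.
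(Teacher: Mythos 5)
Your proof is correct, and every computation checks out: the two-sided density $f_{Z}(z)=\tfrac{rr'}{r+r'}e^{-rz}$ for $z\ge 0$ and $\tfrac{rr'}{r+r'}e^{r'z}$ for $z<0$ is right, as is the incomplete-moment identity $\int_{0}^{C}z^{2}e^{-\lambda z}\,\dd z=\tfrac{2}{\lambda^{3}}q(\lambda C)$. The route differs from the paper's in how the two halves of the law of $Z=Y-Y'$ are handled. The paper never writes down $f_{Z}$: it gets the one-sided tail $\Prob{Z\ge y}=\tfrac{r'}{r+r'}e^{-ry}$ directly from the memoryless property (conditioning on $\{Y\ge Y'\}$), and then evaluates the truncated second moment via the layer-cake formula $\E{W^{2}}=\int_{0}^{\infty}2y\,\Prob{W\ge y}\,\dd y$ applied to $W=Z\mathbf{1}_{0\le Z\le C}$, i.e.\ it integrates the tail $\tfrac{r'}{r+r'}(e^{-ry}-e^{-rC})$ rather than the density. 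Your convolution-plus-integration-by-parts version is marginally more computational but more self-contained (it does not invoke memorylessness, and it re-derives $\Prob{Z\ge 0}=r'/(r+r')$ as a by-product); the paper's version avoids the density altogether and keeps the $r\leftrightarrow r'$ symmetry visible at each step. One cosmetic remark: like the paper, you should state explicitly that $Y$ and $Y'$ are independent and that $Z:=Y-Y'$, since the lemma as written omits both; your proof silently (and correctly) assumes the intended reading.
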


\begin{proof}[Proof of Lemma~\ref{lem:diff-exp-three-series}]
    Note that by Fubini's theorem, for a non-negative random variable $Y$, and $s > 0$
    \begin{equation} \label{eq:moment-identity}
    \E{Y^{s}}  = \E{\int_{0}^{Y} s y^{s-1} \dd y } = \E{\int_{0}^{\infty} s y^{s-1} \mathbf{1}_{Y \geq y} \dd y } = \int_{0}^{\infty} s y^{s-1}\Prob{Y \geq y} \dd y. 
    \end{equation}
    Moreover, if $Z := Y - Y'$, then for all $y > 0$ 
    \[
    \Prob{Z \geq y \, \bigg | \, Z \geq 0} = \Prob{Y \geq Y' + y \, \bigg | \, Y \geq Y'} = e^{-r y},
    \]
    and since $\Prob{Z \geq 0} = \frac{r'}{r+r'}$,  for each $y \geq 0$ we have $\Prob{Z \geq y} =\frac{r'}{r+r'} e^{-ry}$. Writing $Z = Z \mathbf{1}_{Z \geq 0} + Z \mathbf{1}_{Z < 0}$, this already yields~\eqref{eq:tail-z} by symmetry. Finally, by applying~\eqref{eq:moment-identity} to the random variable $Y:= Z \mathbf{1}_{0 \leq Z \leq C}$, with $s=2$ we have 
    \[
    \E{Z^{2} \mathbf{1}_{0 \leq Z \leq C}} = \frac{r'}{r+ r'} \int_{0}^{C} 2 y \left(e^{-ry} - e^{-rC} \right) \dd y = \frac{2r'}{r + r'} \left( \frac{1-e^{-Cr}(1 + cr + (cr)^2/2)}{r^2}\right),
    \]
    leading again, by symmetry to~\eqref{eq:second-moment}. Equation~\eqref{eq:second-moment-2} follows from sending $C \rightarrow \infty$. 
\end{proof}

\begin{proof}[Proof of Proposition~\ref{prop:first-equivalence}]
    We first show that $\sum_{j=1}^{\infty} (X_{j} - X'_{j})$ converges under the assumption that~\eqref{eq:first-fitness-ass} is satisfied for all $\epstwo > 0$.  If $F(j), F'(j)$ denote the respective rates of the exponential random variables $X_{j+1}, X'_{j+1}$, then the first sum in~\eqref{eq:first-fitness-ass}, and the Borel-Cantelli lemma imply that only finitely many terms $X_{j+1}, X'_{j+1}$ satisfy $F(j) \leq \epstwo$ or $F'(j) \leq \epstwo$. Thus, as a series consisting, almost surely, of only finitely many non-zero terms,  
    \begin{equation} \label{eq:truncated-convergence}
        \sum_{j=0}^{\infty} \left(X_{j} - X'_{j} \right) \left(\mathbf{1}_{\left\{F(j) \leq \epstwo\right\}} + \mathbf{1}_{\left\{F'(j) \leq \epstwo\right\}} \right) \quad \text{ converges, almost surely.}   
    \end{equation}
    On the other hand, the series $\sum_{j=0}^{\infty} \left(X_{j} - X'_{j} \right) \mathbf{1}_{\left\{F(j), F'(j) > \epstwo\right\}}$ consists of summmands each having mean zero, whilst by Equation~\eqref{eq:second-moment-2} in Lemma~\ref{lem:diff-exp-three-series}
  \begin{linenomath}
    \begin{align} \label{eq:prob-second}
        & \E{\left(X_{j+1} - X'_{j+1}\right)^2\mathbf{1}_{\left\{F(j), F'(j) > \epstwo\right\}}} = \E{\frac{2}{F(j) + F'(j)} \left( \frac{F'(j)}{F(j)^2} + \frac{F(j)}{F'(j)^2}\right)\mathbf{1}_{\left\{F(j), F'(j) > \epstwo\right\}}} \\ & \hspace{2cm} = \E{\frac{F(j)^3 + F'(j)^3}{\left(F(j) + F'(j)\right)F(j)^2F'(j)^2}\mathbf{1}_{\left\{F(j), F'(j) > \epstwo\right\}}} \\ & \hspace{2cm} =  \E{\frac{F(j)^2 - F(j)F'(j) + F'(j)^2}{F(j)^2F'(j)^2}\mathbf{1}_{\left\{F(j), F'(j) > \epstwo\right\}}} \leq \E{\frac{2}{F(j)^2}\mathbf{1}_{\left\{F(j) > \epstwo\right\}}},
    \end{align}
  \end{linenomath}  
    where the second to last equality uses the factorisation $x^{3} + y^{3} = (x+y)(x^{2} - xy + y^{2})$.
    Thus, if we assume that $\E{\sum_{j=0}^{\infty} \frac{1}{F(j)^2}\mathbf{1}_{\left\{F(j) > \epstwo\right\}}} < \infty$, by the Kolmogorov two series theorem, we deduce that $\sum_{j=1}^{\infty} (X_{j} - X'_{j})\mathbf{1}_{\left\{F(j), F'(j) > \epstwo\right\}}$ converges, hence by~\eqref{eq:truncated-convergence}, so does the sum $\sum_{j=1}^{\infty} (X_{j} - X'_{j})$.  In addition, using the inequality $e^{-x} \leq \frac{1}{x^2}$, valid for all $x > 0$, for any $\eps > 0$ we have, with $\epstwo$ as defined in~\eqref{eq:first-fitness-ass} 
    \begin{linenomath}
    \begin{align}
    \sum_{j=1}^{\infty} \Prob{X_{j} \geq \eps} & \leq \sum_{j=0}^{\infty} \left( \Prob{F(j) \leq \epstwo} + \E{e^{-F(j)\eps} \mathbf{1}_{\{F(j) > \epstwo \}}}\right) 
    & \\ & \leq \frac{1}{\eps^2} \sum_{j=0}^{\infty} \Prob{F(j) \leq \epstwo} + \frac{1}{\eps^2} \sum_{j=0}^{\infty}  \E{\frac{1}{F(j)^2}\mathbf{1}_{\{F(j) > \epstwo \}}} \stackrel{\eqref{eq:first-fitness-ass}}{<} \infty.
    \end{align}
    \end{linenomath}
    Observing also, by the smoothness of the exponential distribution, that for each $X_{j}$, $\Prob{X_{j} = X'_{j}} = 0$, by Corollary~\ref{cor:strict-leadership}, we have $\Prob{\bigcup_{a \in [A]} \slead_{a}} = 1$. 

    Now, suppose instead that, for all $\epstwo > 0$, Equation~\eqref{eq:first-fitness-ass} is not satisfied. Suppose first that for some $\epstwo > 0$ we have 
    \begin{equation} \label{eq:infinitely-many-small}
      \sum_{j=0}^{\infty} \Prob{F(j) \leq \epstwo} =  \infty, \quad \text{i.e., by the Borel-Cantelli lemma, } \quad   \Prob{F(j) \leq \epstwo \quad \text{i.o.}} = 1.
    \end{equation}
    Observe that, by Lemma~\ref{lem:diff-exp-three-series}, for $C > 0$ we have 
    \begin{equation} \label{eq:prob-first}
    \Prob{|X_{j+1} -X'_{j+1}|>C} = \E{\frac{1}{F(j) + F'(j)} \left( F'(j) e^{-F(j)C} + F(j)e^{-F'(j)C}\right)}. 
    \end{equation}
    We also have
    \begin{linenomath}
    \begin{align} \label{eq:first-ind-bor}
         \nonumber &\frac{1}{F(j) + F'(j)} \left( F'(j) e^{-F(j)C} + F(j)e^{-F'(j)C}\right)\mathbf{1}_{\left\{F(j) \leq \epstwo, F'(j) \leq \epstwo\right\}} \\ & \hspace{1cm} \geq e^{-\epstwo C} \left(\frac{F'(j)}{F(j) + F'(j)} + \frac{F(j)}{F(j) + F'(j)}\right) \mathbf{1}_{\left\{F(j) \leq \epstwo, F'(j) \leq \epstwo\right\}} = e^{-\epstwo C}\mathbf{1}_{\left\{F(j) \leq \epstwo, F'(j) \leq \epstwo\right\}},
    \end{align}
    \end{linenomath}
    whilst 
    \begin{linenomath}
    \begin{align} \label{eq:second-ind-bor}
         \nonumber &\frac{1}{F(j) + F'(j)} \left( F'(j) e^{-F(j)C} + F(j)e^{-F'(j)C}\right)\mathbf{1}_{\left\{F(j) \leq \epstwo, F'(j) > \epstwo\right\}} \geq \frac{F'(j)}{F(j) + F'(j)}e^{-F(j)C} \mathbf{1}_{\left\{F(j) \leq \epstwo, F'(j) > \epstwo\right\}} \\ & \hspace{4cm}  > \frac{\epstwo}{F(j) + \epstwo} e^{-F(j) C} \mathbf{1}_{\left\{F(j) \leq \epstwo, F'(j) > \epstwo\right\}} \geq \frac{1}{2} e^{-\epstwo C} \mathbf{1}_{\left\{F(j) \leq \epstwo, F'(j) > \epstwo\right\}}. 
    \end{align}
    \end{linenomath}
    Thus, combining Equations~\eqref{eq:first-ind-bor} and~\eqref{eq:second-ind-bor}, on $\mathbf{1}_{\left\{F(j) \leq \epstwo\right\}}$, we can bound the term inside the expectation in~\eqref{eq:prob-first} by
    \begin{linenomath}
        \begin{equation} \label{eq:bound-on-small}
         \frac{1}{F(j) + F'(j)} \left( F'(j) e^{-F(j)C} + F(j)e^{-F'(j)C}\right) \mathbf{1}_{\left\{F(j) \leq \epstwo\right\}} \geq \frac{1}{2} e^{-\epstwo C} \mathbf{1}_{\left\{F(j) \leq \epstwo\right\}}
        %\\ \nonumber & \hspace{1.3cm} =\frac{1}{F(j) + F'(j)} \left( F'(j) e^{-F(j)C} + F(j)e^{-F'(j)C}\right) \left(\mathbf{1}_{\left\{F(j) \leq \epstwo, F'(j) \leq \epstwo\right\}} + \mathbf{1}_{\left\{F(j) \leq \epstwo, F'(j) > \epstwo\right\}}\right)
        %\\ & \hspace{1cm} \stackrel{\eqref{eq:first-ind-bor},\eqref{eq:second-ind-bor}}{\geq} e^{-\epstwo C} \mathbf{1}_{\left\{F(j) \leq \epstwo, F'(j) \leq \epstwo\right\}} + \frac{1}{2}e^{-\epstwo C} \mathbf{1}_{\left\{F(j) \leq \epstwo, F'(j) > \epstwo\right\}} . 
        \end{equation}
    \end{linenomath}
    Thus, by conditioning on the sequence $(F(j))_{j \in \mathbb{N}_{0}}$, by~\eqref{eq:infinitely-many-small} and~\eqref{eq:bound-on-small}, for any $C > 0$ we have 
    \begin{align}
            \sum_{j=1}^{\infty} \Prob{|X_{j+1} -X'_{j+1}|>C \, \bigg | \, (F(j))_{j \in \mathbb{N}_{0}}} \geq \sum_{j=1}^{\infty} \frac{1}{2}e^{-\epstwo C} = \infty \quad \text{almost surely.}
    \end{align}
    Taking expectations, this implies that for any $C > 0$ we have $\sum_{j=1}^{\infty} \Prob{|X_{j+1} -X'_{j+1}|>C} = \infty$, which implies, by the Kolmogorov three series theorem that $\sum_{j=1}^{\infty} (X_{j} - X'_{j})$ diverges. By Item~\ref{item:leadership} of Theorem~\ref{thm:leadership-classification}, we have $\Prob{\bigcup_{a \in [A]} \slead_{a}} \leq \Prob{\bigcup_{a \in [A]} \lead_{a}} = 0$.  
    
    The other regime in which~\eqref{eq:first-fitness-ass} is not satisfied is when, for some $\epstwo > 0$
    \begin{equation} \label{eq:finitely-many-small}
        \sum_{j=0}^{\infty} \Prob{F(j) \leq \epstwo} < \infty \quad \text{ but } \quad \E{\sum_{j=0}^{\infty} \frac{1}{F(j)^2} \mathbf{1}_{\left\{F(j) > \epstwo\right\}}} = \infty. 
    \end{equation}
    Assuming~\eqref{eq:finitely-many-small}, recall that by~\eqref{eq:truncated-convergence} convergence of $\sum_{j=1}^{\infty} (X_j - X'_j)$ is determined by convergence of the series $\sum_{j=0}^{\infty} \left(X_{j} - X'_{j} \right) \mathbf{1}_{\left\{F(j), F'(j) > \epstwo\right\}}$. But now, by similar manipulations to~\eqref{eq:prob-second}, we have 
     \begin{linenomath}
    \begin{align} 
        & \E{\left(X_{j+1} - X'_{j+1}\right)^2\mathbf{1}_{\left\{F(j), F'(j) > \epstwo\right\}}}  =  \E{\frac{F(j)^2 - F(j)F'(j) + F'(j)^2}{F(j)^2F'(j)^2}\mathbf{1}_{\left\{F(j), F'(j) > \epstwo\right\}}} \\ & = \E{\frac{1}{F(j)^2}\mathbf{1}_{\left\{F(j), F'(j)  > \epstwo\right\}}} + \E{\left(\frac{1}{F'(j)}\mathbf{1}_{\left\{F(j), F'(j)  > \epstwo\right\}}\right)^2 - \E{\frac{1}{F'(j)}\mathbf{1}_{\left\{F(j), F'(j)  > \epstwo\right\}}}^2} \\ \label{eq:bound-conv} & \hspace{1cm} \geq \E{\frac{1}{F(j)^2}\mathbf{1}_{\left\{F(j), F'(j)  > \epstwo\right\}}} = \E{\frac{1}{F(j)^2}\mathbf{1}_{\left\{F(j) > \epstwo\right\}}} \Prob{F'(j) > \epstwo},
    \end{align}
  \end{linenomath}  
    where the second-to-last inequality follows from the non-negativity of the variance of a random variable. But now, since $\sum_{j=0}^{\infty} \Prob{F'(j) \leq \epstwo} < \infty$, for all $j$ sufficiently large, we have $\Prob{F'(j) > \epstwo} \geq \frac{1}{2}$, say. This fact, combined with~\eqref{eq:finitely-many-small} and the bound in~\eqref{eq:bound-conv} implies that 
 \[
 \sum_{j=1}^{\infty} \E{\left(X_{j+1} - X'_{j+1}\right)^2\mathbf{1}_{\left\{F(j), F'(j) > \epstwo\right\}}} = \infty, 
 \]
    which implies that the series $\sum_{j=0}^{\infty} \left(X_{j} - X'_{j} \right) \mathbf{1}_{\left\{F(j), F'(j) > \epstwo\right\}}$ diverges, by the Kolmogorov three series theorem. Therefore, again, $\sum_{j=1}^{\infty} (X_j - X'_j)$ diverges, so that, again, by Item~\ref{item:leadership} of Theorem~\ref{thm:leadership-classification} $\Prob{\bigcup_{a \in [A]} \slead_{a}} = 0$. 
    %
    %for each $j \in \mathbb{N}_{0}$ and $C > 0$ we have  
   % 
  %  and, with $q(x) =  1 - e^{-x}(1+x + x^2/2)$, we have
 %   \begin{equation} \label{eq:prob-second}
%        \E{\left(X_{j+1} - X'_{j+1}\right)^2 \mathbf{1}_{|X_{j+1} -X'_{j+1}|\leq C}} = \E{\frac{2}{F(j) + F'(j)} \left( \frac{F'(j) q(CF(j))}{F(j)^2} + \frac{F(j) q(CF'(j))}{F'(j)^2}\right)}. 
   % \end{equation}
  %  Now, assume that $\E{\sum_{j=0}^{\infty} \frac{1}{F(j)^2}} < \infty$. Noting that $\lim_{x \to \infty} q(x) = 1$, 
    %Using the inequality $e^{-x} \leq \frac{1}{x^2}$, valid for all $x \in (0, \infty)$, we bound the right side of~\eqref{eq:prob-first} above by
  %  \begin{linenomath}
   % \begin{align}\frac{1}{C^2} \E{\frac{1}{F(j) + F'(j)} \left(\frac{F'(j)}{F(j)^2} + \frac{F(j)}{F'(j)^2} \right)} &= \frac{1}{C^2} \E{\frac{F(j)^3 + F'(j)^3}{\left(F(j) + F'(j)\right)F(j)^2F'(j)^2}} \\ & \hspace{1cm} = \frac{1}{C^2} \E{\frac{F(j)^2 - F(j)F'(j) + F'(j)^2}{F(j)^2F'(j)^2}} \leq \frac{2}{C^2}\E{\frac{1]{F(j)^2}}}. 
    %\end{align}
    %\end{linenomath}
    %Suppose that the series in~\eqref{eq:balls-in-bins-leadership} diverges.  

\end{proof}

\subsection{Proof of Theorem~\ref{thm:no-atom}} \label{sec:series-1}
The heart of Theorem~\ref{thm:no-atom} lies in the following proposition. 
\begin{prop} \label{prop:no-atom}
    Suppose $(S_{j})_{j \in \mathbb{N}}$ is a sequence of mutually independent, symmetric random variables, with $\sum_{i=1}^{\infty} S_{i} < \infty$ almost surely. Suppose also that $\sum_{i=1}^{\infty} \Prob{S_{i} \neq 0} = \infty$. Then
    \[\Prob{\sum_{i=1}^{\infty} S_{i} = 0} = 0.\] 
\end{prop}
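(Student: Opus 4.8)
The plan is to show that a convergent sum of independent symmetric random variables, infinitely many of which are nonzero, places zero mass at $0$. The symmetry of each $S_i$ means the characteristic function $\varphi_i(t) := \mathbb{E}[e^{itS_i}]$ is real-valued, and $\varphi_i(t) \in [-1,1]$ for all $t$; moreover $\varphi_i(t) = 1$ only on a set of $t$ determined by the support of $S_i$, and in particular $\varphi_i$ is not identically $1$ precisely when $\mathbb{P}(S_i \neq 0) > 0$. Writing $T := \sum_{i=1}^\infty S_i$, independence gives $\varphi_T(t) = \prod_{i=1}^\infty \varphi_i(t)$ (the product converges since the sum converges a.s. and hence in distribution). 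The mass at a point can be recovered by the classical formula $\mathbb{P}(T = 0) = \lim_{R \to \infty} \frac{1}{2R}\int_{-R}^{R} \varphi_T(t)\,dt$, so it suffices to prove that this Cesàro average of $\prod_i \varphi_i(t)$ tends to $0$.

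First I would reduce to controlling $|\varphi_T(t)| = \prod_i |\varphi_i(t)|$ on most of the real line. The key quantitative input is a lower bound: for a symmetric random variable $S$, $1 - \varphi_S(t) = \mathbb{E}[1 - \cos(tS)] \geq c\,\mathbb{P}(|S| \geq \delta)$ for $|t|$ in a suitable range, or more robustly one can integrate: $\frac{1}{2u}\int_{-u}^{u}(1-\varphi_S(t))\,dt = \mathbb{E}\big[1 - \frac{\sin(uS)}{uS}\big] \geq c\,\mathbb{P}(|S| \geq 1/u)$ for an absolute constant $c>0$. Combined with $\sum_i \mathbb{P}(S_i \neq 0) = \infty$, this should force $\prod_i |\varphi_i(t)|$ to vanish, at least after averaging, because the divergence of $\sum_i(1 - |\varphi_i(t)|)$ (on a large enough set of $t$) makes the product zero there. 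The delicate point is that $1 - |\varphi_i(t)|$ could be small for the particular $t$ we care about even when $\mathbb{P}(S_i \neq 0)$ is not small — this happens when $S_i$ is supported on a lattice and $t$ is near a period. Handling this is where I expect the real work to lie.

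To deal with the lattice obstruction I would average over $t$: define $g(R) := \frac{1}{2R}\int_{-R}^R \prod_{i \le n}|\varphi_i(t)|\,dt$ and show it is small for $n$ large, uniformly enough to pass to the limit. One clean route: by the integral inequality above, for each fixed scale $u$, $\frac{1}{2u}\int_{-u}^{u}\big(1 - |\varphi_i(t)|\big)\,dt \geq c\,\mathbb{P}(|S_i| \geq 1/u)$. If $\sum_i \mathbb{P}(S_i \neq 0) = \infty$, then for some fixed $\delta > 0$ either $\sum_i \mathbb{P}(|S_i| \geq \delta) = \infty$ — in which case take $u = 1/\delta$ and the averaged defects sum to infinity, forcing the averaged product to $0$ by a Borel–Cantelli / product-estimate argument on $[-u,u]$ — or all the "mass" escapes to $0$, i.e.\ $\mathbb{P}(0 < |S_i| \le \delta)$ carries the divergence for every $\delta$; in that second regime one argues that such small perturbations, being genuinely spread out (not concentrated at $0$ since $\mathbb{P}(S_i \ne 0) > 0$ and nonatomic-near-zero mass accumulates), smooth out the limiting law — concretely, one shows the conditional law of $T$ given the "large" summands has no atom, using that an infinite convolution of symmetric laws each putting positive mass arbitrarily close to but away from $0$ is nonatomic.

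The cleanest packaging, which I would ultimately follow, is: let $a_i := \mathbb{P}(S_i \ne 0)$, so $\sum a_i = \infty$. Using symmetry, compare $\prod_i|\varphi_i(t)|$ against $\prod_i(1 - c'\,a_i\,\psi(t))$ for a fixed nonnegative bump function $\psi$ with $\int \psi > 0$ obtained by averaging the elementary bound $1 - \varphi_i(t) \ge$ (something) $\cdot a_i$ over a short interval of $t$ — the averaging kills the lattice issue because no fixed $\psi$-average can vanish. Since $\sum a_i = \infty$, $\prod_i(1 - c' a_i \psi(t)) = 0$ for a.e.\ $t$ in the support of $\psi$, so on a shifted/scaled copy of that support $|\varphi_T(t)| = 0$ a.e., and by translating we cover a.e.\ $t \in \mathbb{R}$; then $\frac{1}{2R}\int_{-R}^R \varphi_T(t)\,dt \to 0$ by dominated convergence, giving $\mathbb{P}(T = 0) = 0$. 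The main obstacle, to repeat, is making the "averaging defeats the lattice" step rigorous — ensuring the defect bound $\int_I (1 - |\varphi_i(t)|)\,dt \gtrsim a_i$ holds with a constant uniform over $i$ regardless of where $S_i$ is supported — and this is exactly the estimate I would prove carefully first.
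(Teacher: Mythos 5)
Your approach is genuinely different from the paper's: you go through characteristic functions and a Cesàro-average formula for the atom at $0$, while the paper reasons directly at the level of distributions, using a Bernoulli thinning to reduce to the case $\Prob{S_j = 0} = 0$ (Lemma~\ref{lem:no-atom}) and then a ``flipping'' argument exploiting symmetry together with the fact that the scales of the atoms of $|S_{n_k}|$ shrink to zero, so that the flipped atom sets $\pm 2P_k$ can be made disjoint across $k$ and $\sigma$-additivity yields a contradiction. Unfortunately, as written your proposal has a genuine gap, and it is located exactly at the estimate you identify as ``the one to prove carefully first.''

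The estimate $\int_I \bigl(1 - |\varphi_i(t)|\bigr)\,dt \gtrsim a_i$ with a constant and interval $I$ uniform in $i$ is false, and the hypotheses of the proposition force the regime in which it fails. Since $\sum_i S_i$ converges a.s., for every $\delta > 0$ Borel--Cantelli gives $\sum_i \Prob{|S_i| > \delta} < \infty$; otherwise $|S_i| > \delta$ infinitely often, contradicting $S_i \to 0$. So the ``first regime'' of your case split (some $\delta$ with $\sum_i \Prob{|S_i| \geq \delta} = \infty$) can never occur, and all the mass responsible for $\sum_i a_i = \infty$ must concentrate near $0$. In that regime the averaged defect degenerates: for $S_i = \pm\eps$ with probability $a_i/2$ each and $0$ otherwise, one computes $\frac{1}{2u}\int_{-u}^{u}(1 - \varphi_i(t))\,dt = a_i\bigl(1 - \frac{\sin(\eps u)}{\eps u}\bigr)$, which is $o(a_i)$ as $\eps \to 0$ for fixed $u$. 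Your exact bound $\frac{1}{2u}\int_{-u}^{u}(1-\varphi_i(t))\,dt \geq c\,\Prob{|S_i| \geq 1/u}$ is correct, but $\Prob{|S_i| \geq 1/u}$ is summable over $i$ for every fixed $u$, so $\sum_i$ of the averaged defects over any fixed interval is finite and cannot force the product to vanish. Hence the ``cleanest packaging'' comparison against $\prod_i(1 - c' a_i \psi(t))$ with a fixed bump $\psi$ cannot get off the ground; and the final ``translating $\psi$ to cover a.e.\ $t$'' step does not repair this, since the issue is not where $\psi$ sits but that the defect at each scale is too small relative to $a_i$. A Fourier-analytic proof is not hopeless, but it would need scales $u_i \to \infty$ adapted to the shrinking support of $S_i$ (much as the paper's $(c_k, n_k)$ construction does), not the fixed-scale argument you outline. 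The step you flag as the one to make rigorous is not merely difficult; it is false.
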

%The proof of Theorem~\ref{thm:no-atom} uses Proposition~\ref{prop:no-atom}. 
We first prove Theorem~\ref{thm:no-atom}, then prove Proposition~\ref{prop:no-atom} over the rest of the section. For Theorem~\ref{thm:no-atom}, we also require the following lemmata

\begin{lem} \label{lem:cond-no-atom}
    Suppose that $Y, Y'$ are i.i.d. random variables taking values in $\mathbb{R}$. Then the distribution of $Y$ contains an atom in $\mathbb{R}$ if and only if $\Prob{Y =  Y'} > 0$.
\end{lem}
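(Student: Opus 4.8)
The plan is to collapse both implications into a single identity obtained from Tonelli's theorem. First I would write $\mu$ for the common law of $Y$ and $Y'$ on $\mathbb{R}$ and record that the diagonal $\{(y,y')\colon y = y'\}$ is a closed, hence Borel, subset of $\mathbb{R}^2$, so that $\Prob{Y = Y'}$ is meaningful as the $(\mu \otimes \mu)$-measure of this set.

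Next, using the independence of $Y$ and $Y'$ together with Tonelli's theorem, I would compute
\begin{equation}
\Prob{Y = Y'} = \int_{\mathbb{R}} \mu(\{y\}) \, \mu(\dd y).
\end{equation}
I would then invoke the standard fact that a probability measure has at most countably many atoms (for each $n \in \mathbb{N}$ the set $\{y\colon \mu(\{y\}) \geq 1/n\}$ has at most $n$ elements), so that the function $y \mapsto \mu(\{y\})$ is, up to a $\mu$-null set, supported on the countable set $D$ of atoms of $\mu$; this gives
\begin{equation}
\Prob{Y = Y'} = \sum_{y \in D} \mu(\{y\})^2.
\end{equation}

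From this identity both directions follow at once: if $\mu$ has an atom at some $a$, the sum dominates the strictly positive term $\mu(\{a\})^2$, so $\Prob{Y = Y'} > 0$; conversely, if $\Prob{Y = Y'} > 0$ then at least one summand is positive, which forces $\mu(\{a\}) > 0$ for some $a$, i.e.\ $\mu$ has an atom. There is essentially no obstacle in this argument; the only steps requiring a word of care are the Borel-measurability of the diagonal and the legitimacy of the interchange of integrals, both of which are routine.
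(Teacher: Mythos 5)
Your proof is correct and is essentially the same as the paper's: both compute $\Prob{Y=Y'}$ by integrating out one variable (the paper via conditional expectation, you via Tonelli), and both note that only the atoms of $\mu$ contribute. The only difference is presentational: you distill the computation into the single identity $\Prob{Y=Y'}=\sum_{y\in D}\mu(\{y\})^2$, from which both directions drop out at once, whereas the paper argues the two implications separately (a one-term lower bound $\Prob{Y=Y'}\ge\Prob{Y=c}^2$ for sufficiency, and the vanishing of the conditional probability for necessity). Your version is slightly tidier but carries no new content.
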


\begin{proof}[Proof of Lemma~\ref{lem:cond-no-atom}]
    If for some $c \in \mathbb{R}$ we have $\Prob{Y = c} > 0$, then 
    \begin{equation} \label{eq:difference-from-atom}
        \Prob{Y = Y'} = \Prob{Y - Y' = 0} \geq \Prob{Y = c, Y' = c} = \Prob{Y = c}^2. 
    \end{equation}
    On the other hand, 
    \begin{equation} \label{eq:difference-dist}
    \Prob{Y - Y' = 0} = \E{\E{ Y = Y' \, \bigg | \, Y' }}. 
    \end{equation}
     If for all $c > 0$, $\Prob{Y = c} = 0$, then the term inside the expectation in~\eqref{eq:difference-dist} is zero almost surely, hence $\Prob{Y - Y' = 0} = 0$. 
\end{proof}

\begin{lem} \label{lem:cond-no-atom-2}
    Suppose that $(Y_{j})_{j \in \mathbb{N}}$ and $(Y'_{j})_{j \in \mathbb{N}}$ are independent sequences of mutually independent random variables, with $Y_{j} \sim Y'_{j}$ for all $j \in \mathbb{N}$. Then, the following statements are equivalent:
    \begin{enumerate}
        \item \label{item:event-det} For some collection $(c_{j})_{j \in \mathbb{N}} \in \mathbb{R}^{\mathbb{N}}$, 
          \[ \forall j \in \mathbb{N} \quad \Prob{Y_{j} = c_{j}} > 0 \quad \text{ and } \quad \sum_{j=1}^{\infty} \Prob{Y_{j} \neq c_{j}} < \infty. \]
     \item \label{item:diff-ident} $\forall j \in \mathbb{N} \quad \Prob{Y_{j} = Y'_{j}} > 0 \quad \text{ and } \quad \sum_{j=1}^{\infty} \Prob{Y_{j} \neq Y'_{j}} < \infty.$
    \end{enumerate}
\end{lem}

\begin{proof}[Proof of Lemma~\ref{lem:cond-no-atom-2}]
    To show that the Statement~\ref{item:event-det} implies Statement~\ref{item:diff-ident}, note that, if such a sequence $(c_{j})_{j \in \mathbb{N}}$ is given, then $\Prob{Y_{j} = Y'_{j}} \geq \Prob{Y_{j}=c_{j}, Y'_{j} = c_{j}} = \Prob{Y_{j} = c_{j}}^2 > 0$. Moreover, by the Borel-Cantelli lemma, we have 
    \[
    \Prob{\left\{Y_{j} \neq  c_{j}\text{ i.o.}\right\} \cup \left\{Y'_{j} \neq  c_{j}\text{ i.o.}\right\}} = 0. 
    \]
    Hence, almost surely, for all but finitely many $j$, $Y_{j} = Y'_{j} = c_{j}$. Again, by the Borel-Cantelli lemma, it must be the case that $\sum_{j=1}^{\infty} \Prob{Y_{j} \neq Y'_{j}} < \infty$, hence Statement~\ref{item:diff-ident} is satisfied. 

    On the other hand, suppose Statement~\ref{item:diff-ident} is satisfied. Then, by the Borel-Cantelli lemma, \[\Prob{Y_{j} \neq Y'_{j} \text{ i.o.}} = 0.\] Now, if we first sample the sequence $(Y'_{j})_{j \in \mathbb{N}}$, it must be the case that 
    \[
    \Prob{Y_{j} \neq Y'_{j} \text{ i.o.} \, \bigg | \, (Y'_{j})_{j \in \mathbb{N}}} = 0 \quad \text{almost surely.}
    \]
    But, by independence of $(Y_{j})_{j \in \mathbb{N}}$ and $(Y'_{j})_{j \in \mathbb{N}}$,  this implies that there must be a sequence $(c'_{j})_{j \in \mathbb{N}}$, coinciding with the realisation of $(Y'_{j})_{j \in \mathbb{N}}$, such that $\Prob{Y_{j} \neq c'_{j} \text{ i.o.}} = 0$. By the Borel-Cantelli lemma, this implies that $\sum_{j=1}^{\infty} \Prob{Y_{j} \neq c'_{j}} < \infty$. As a result, $\Prob{Y_{j} = c'_{j}} > 0$ for all $j \geq j_0$, say. To form $(c_{j})_{j \in \mathbb{N}}$, we set $c_{j} = c'_{j}$ for all $j \geq j_0$. For $j < j_{0}$, by applying Lemma~\ref{lem:cond-no-atom} we may choose values $c_{j}$ such that $\Prob{Y_{j} = c_{j}} > 0$.
\end{proof}

\begin{proof}[Proof of Theorem~\ref{thm:no-atom}]
    First note that, if $(Y_{j})_{j \in \mathbb{N}}$ and $(Y'_{j})_{j \in \mathbb{N}}$ are independent sequences of mutually independent random variables, and $\sum_{j=1}^{\infty} Y_{j}$ converges almost surely, then by Lemma~\ref{lem:cond-no-atom} applied to the sum $\sum_{j=1}^{\infty} Y_{j}$, the distribution of $\sum_{j=1}^{\infty} Y_{j}$ has an atom if and only if $\sum_{j=1}^{\infty} (Y_{j} - Y'_{j})$ has an atom at $0$. Now, by Lemma~\ref{lem:cond-no-atom-2}, it suffices to show that the distribution of $\sum_{j=1}^{\infty} Y_{j}$ contains an atom on $\mathbb{R}$ if and only if 
        \begin{equation} \label{eq:atomic-sum}
            \forall j \in \mathbb{N} \quad \Prob{Y_{j} = Y'_{j}} > 0 \quad \text{ and } \quad \sum_{j=1}^{\infty} \Prob{Y_{j} \neq Y'_{j}} < \infty. 
        \end{equation}
    If Equation~\eqref{eq:atomic-sum} is satisfied, then we have 
    \[
    \Prob{\sum_{j=1}^{\infty}(Y_{j} - Y'_{j}) = 0} \geq \Prob{\bigcap_{j=1}^{\infty} \left\{Y_{j} - Y'_{j} = 0\right\}} = \prod_{j=1}^{\infty} \Prob{Y_{j} = Y'_{j}} = \prod_{j=1}^{\infty} \left(1 - \Prob{Y_{j} \neq Y'_{j}} \right) > 0,
    \]
    since $\sum_{j=1}^{\infty} \Prob{Y_{j} \neq Y'_{j}} < \infty$, and each $\Prob{Y_{j} = Y'_{j}} > 0$. 

    If Equation~\eqref{eq:atomic-sum} is not satisfied because, for some $i$, we have $\Prob{Y_{i} \neq Y_{i}} = \Prob{Y_{i} - Y'_{i} \neq 0} = 1$, by Lemma~\ref{lem:cond-no-atom}, $Y_{i}$ cannot have an atom on $\mathbb{R}$. Therefore, 
    \begin{linenomath}
    \begin{align} \label{eq:second-non-atom}
\nonumber    \Prob{\sum_{j=1}^{\infty}(Y_{j} - Y'_{j}) = 0} & = \Prob{Y_{i} = Y'_{i} + \sum_{j \neq i} (Y'_{j} - Y_{j})} \\ & = \E{\Prob{Y_{i} = Y'_{i} + \sum_{j \neq i} (Y'_{j} - Y_{j}) \, \bigg | \, Y'_{i} + \sum_{j \neq i} (Y'_{j} - Y_{j})}} = 0,
    \end{align}
    \end{linenomath}
    where the final equality in~\eqref{eq:second-non-atom} follows from the fact that, since $Y_{i}$ has no atom, the term inside the expectation in~\eqref{eq:second-non-atom} is zero almost surely. Finally, if~Equation~\eqref{eq:atomic-sum} is not satisfied because $\sum_{j=1}^{\infty} \Prob{Y_{j} = Y'_{j}} = \infty$, we can apply Proposition~\ref{prop:no-atom} to the sequence $(S_{j})_{j \in \mathbb{N}} := (Y_{j} - Y'_{j})_{j \in \mathbb{N}}$ to again show that $\Prob{\sum_{j=1}^{\infty}(Y_{j} - Y'_{j}) = 0} = 0$. 
\end{proof}

\subsubsection{Proof of Proposition~\ref{prop:no-atom}}
We first prove the following:
\begin{lem} \label{lem:no-atom}
    Suppose $(S_{j})_{j \in \mathbb{N}}$ is a sequence of mutually independent, symmetric random variables, such that $\sum_{j=1}^{\infty} S_{j}$ converges almost surely. Suppose that for each $j \in \mathbb{N}$, we have $\Prob{S_{j} = 0} = 0$. Then
    \[\Prob{\sum_{j=1}^{\infty} S_{j} = 0} = 0.\] 
\end{lem}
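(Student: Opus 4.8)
The plan is to reduce the claim to an anti-concentration estimate for the partial sums and then peel off a tail. Write $W := \sum_{j=1}^{\infty} S_{j}$ and, for $n \in \mathbb{N}$, set $U_{n} := \sum_{j=1}^{n} S_{j}$ and $V_{n} := \sum_{j>n} S_{j}$; since the series converges almost surely, $W = U_{n} + V_{n}$ with $U_{n}$ and $V_{n}$ independent. Conditioning on $V_{n}$ and using independence,
\[
\Prob{W = 0} = \Prob{U_{n} = -V_{n}} \leq \sup_{x \in \mathbb{R}} \Prob{U_{n} = x}.
\]
Hence it suffices to prove that $\sup_{x} \Prob{U_{n} = x} \to 0$ as $n \to \infty$.

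For this I would use the L\'evy concentration function $Q(X; L) := \sup_{x \in \mathbb{R}} \Prob{X \in [x, x+L]}$, which is nondecreasing in $L$ and satisfies $Q(X; L) \downarrow \sup_{x} \Prob{X = x}$ as $L \downarrow 0$, together with (a form of) the Kolmogorov--Rogozin inequality: there is an absolute constant $C$ such that, for independent $S_{1}, \dots, S_{n}$ and every $L > 0$,
\[
Q(U_{n}; L) \leq C \Bigl( \sum_{j=1}^{n} \bigl(1 - Q(S_{j}; L)\bigr) \Bigr)^{-1/2}.
\]
The only place the hypotheses enter is to control the summands uniformly: since $S_{j}$ is symmetric with $\Prob{S_{j} = 0} = 0$, for $x \neq 0$ the disjoint events $\{S_{j} = x\}$ and $\{S_{j} = -x\}$ have equal probability, so $\Prob{S_{j} = x} \leq \tfrac{1}{2}$, and $\Prob{S_{j} = 0} = 0$; therefore $\sup_{x} \Prob{S_{j} = x} \leq \tfrac{1}{2}$ for every $j$. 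Consequently, for each $n$ we may pick $\ell_{n} > 0$ small enough that $Q(S_{j}; \ell_{n}) \leq \tfrac{3}{4}$ for all $1 \leq j \leq n$, and then
\[
\sup_{x} \Prob{U_{n} = x} = Q(U_{n}; 0) \leq Q(U_{n}; \ell_{n}) \leq C\,(n/4)^{-1/2} \xrightarrow[n \to \infty]{} 0 ,
\]
which together with the tail bound above gives $\Prob{W = 0} = 0$.

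The main obstacle is the anti-concentration estimate itself: a purely elementary argument is awkward because the mechanism by which the law of $U_{n}$ spreads out varies drastically with the coefficients (contrast $S_{j} = \pm 2^{-j}$, where the supports branch, with $S_{j} = \pm 1$, a diffusive lattice walk), so I would lean on a classical concentration-function inequality; the genuinely essential point is the uniform bound $\sup_x\Prob{S_j=x}\le\tfrac12$ coming from symmetry and the absence of an atom at $0$. An alternative route avoiding Kolmogorov--Rogozin is to condition on $(|S_{j}|)_{j}$: because each $S_{j}$ is symmetric with no atom at $0$ one may write $S_{j} = \varepsilon_{j} |S_{j}|$ with $(\varepsilon_{j})$ i.i.d.\ uniform on $\{\pm 1\}$ and independent of $(|S_{j}|)_{j}$, which reduces the problem (via a routine Fubini argument, using that the series converges almost surely for almost every realization of $(|S_{j}|)_{j}$) to a Rademacher series $\sum_{j} \varepsilon_{j} r_{j}$ with deterministic $r_{j} > 0$; for such a series the Erd\H{o}s--Littlewood--Offord inequality gives $\sup_{x} \Prob{\sum_{j=1}^{n} \varepsilon_{j} r_{j} = x} \leq \binom{n}{\lfloor n/2 \rfloor} 2^{-n} \to 0$, and one concludes exactly as above. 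The remaining ingredients --- the decomposition of $W$, the elementary monotonicity and one-sided continuity of $Q$, and the arithmetic --- are routine.
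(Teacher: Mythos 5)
Your proof is correct, but it takes a genuinely different route from the paper's. The paper argues directly by contradiction using $\sigma$-additivity: assuming $p_1 := \Prob{\sum_i S_i = 0} > 0$, it first shows (Claim~\ref{clm:atomic-bound}) that the event $\{\sum_i S_i = 0\}$ is essentially supported on realizations where $S_k$ hits an atom of its own law, for every $k$. It then constructs (Claim~\ref{clm:descending-sequence}) a sparse subsequence $n_k$ and a strictly decreasing sequence of thresholds $c_k$ so that $|S_{n_k}|$ has $\geq \eps$ of the mass of $\{\sum_i S_i=0\}$ concentrated in atoms inside $(c_{k+1},c_k]$; flipping the sign of $S_{n_k}$ then shows $\Prob{\sum_i S_i \in (-2P_k)\cup 2P_k}\geq\eps$ for pairwise disjoint sets $(-2P_k)\cup 2P_k$, contradicting that the total mass is one. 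Your argument instead extracts, from symmetry plus the absence of an atom at $0$, the uniform bound $\sup_x \Prob{S_j = x}\leq\tfrac12$ and then runs a classical anti-concentration estimate: $\Prob{\sum_j S_j = 0}\leq \sup_x\Prob{\sum_{j\leq n}S_j=x}$ by conditioning on the tail, and the latter tends to $0$ either by Kolmogorov--Rogozin or, after conditioning on $(|S_j|)$ and reducing to a Rademacher series $\sum_j\eps_j r_j$ with all $r_j>0$, by the Erd\H{o}s--Littlewood--Offord/Sperner bound $\binom{n}{\lfloor n/2\rfloor}2^{-n}$. The paper's argument is entirely self-contained and elementary, at the cost of some delicate bookkeeping with the atomic and continuous parts of the laws; your argument is conceptually cleaner and makes transparent exactly why the two hypotheses (symmetry, no atom at $0$) are what one needs, at the cost of invoking a non-trivial concentration-function inequality. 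Your second route via the Rademacher reduction is especially tidy: after writing $S_j=\eps_j|S_j|$, a routine Fubini step plus the ELO antichain bound does all the work, and this makes visible the combinatorial mechanism (Sperner's theorem) that underlies anti-concentration for signed sums with nonzero coefficients.

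One small point worth spelling out if you wrote this up: your step that $Q(X;L)\downarrow\sup_x\Prob{X=x}$ as $L\downarrow 0$ is true but not completely trivial; it uses tightness to extract a convergent subsequence of near-maximizing centers and continuity of measure from above, and deserves a line of justification since it is where you convert the pointwise bound $\sup_x\Prob{S_j=x}\leq\tfrac12$ into the interval bound $Q(S_j;\ell_n)\leq\tfrac34$ that Kolmogorov--Rogozin actually consumes.
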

Given Lemma~\ref{lem:no-atom}, we are ready to prove Proposition~\ref{prop:no-atom}
\begin{proof}[Proof of Proposition~\ref{prop:no-atom}]
We construct a random variable with the same distribution as series $\sum_{i=1}^{\infty} S_{i}$ in two stages.  For each $k \in \mathbb{N}$, let $I_{k}$ be a Bernoulli random variable such that $\Prob{I_{k} = 1} = \Prob{S_{k} \neq 0}$, and set $\mathcal{I} : = \left\{k \in \mathbb{N}\colon I_{k} = 1\right\}$. By the Borel-Cantelli lemma, and the assumption that $\sum_{k=1}^{\infty} \Prob{S_{k} \neq 0} = \infty$, we have $| \mathcal{I}| = \infty$ almost surely. Now, we define random variables $\tilde{S_{i}}, i \in \mathbb{N}$ such that, for any measurable set $A \subseteq \mathbb{R} \setminus \{0\}$ we have
\[
\Prob{\tilde{S}_{i} \in A} = \begin{cases} 
\Prob{S_{i} \in A}/\Prob{S_{i} \neq 0} & \text{if $\Prob{S_{i} \neq 0} > 0$, }
\\ 0 & \text{otherwise. }
\end{cases}. 
\]
By construction, one can readily verify that we have $\sum_{i=1}^{\infty} S_{i} \sim \sum_{i \in \mathcal{I}} \tilde{S}_{i}$. In addition, since each $S_{i}$ is symmetric, one can verify that $\tilde{S}_{i}$ is symmetric. Finally, after conditioning on the random, almost surely infinite set $\mathcal{I}$, $\sum_{i \in \mathcal{I}} \tilde{S}_{i}$ is a sum of symmetric, mutually independent, almost surely non-zero random variables. Thus, by Lemma~\ref{lem:no-atom}, we have
\[
\Prob{\sum_{i=1}^{\infty} S_{i} = 0} = \Prob{\sum_{i \in \mathcal{I}} \tilde{S}_{i} = 0} = \E{\Prob{\sum_{i \in \mathcal{I}} \tilde{S}_{i} = 0 \, \bigg | \, \mathcal{I}}} = 0. 
\]
\end{proof}

The proof of Lemma~\ref{lem:no-atom} is a bit more technical. The idea is that, since each $S_{k}$ is symmetric, if $S_{k}$ contains an atom at $c \neq 0$, $S_{k}$ also contains an atom at $-c$. By `flipping' the value of $S_{k}$, we have $\Prob{\sum_{j=1} S_{j} = 0, S_{k} = c} = \Prob{\sum_{j=1}^{\infty} S_{j} = -2c, S_{k} = -c}$. One can then leverage this idea to show that the distribution of $\sum_{j=1}^{\infty} S_{j}$ has `too many atoms' to be a probability distribution. 

In order to prove Lemma~\ref{lem:no-atom}, we introduce some notation with regards to the measures describing random variables. For a random variable $Y$ with values in $\mathbb{R}$, we denote the measure associated with $Y$ by $\mu^{Y}$. Given a measurable set $U$, we denote by $\mu^{Y}|_{U}$ the restriction of the measure $\mu^{Y}$ to the set $U$. Then given $Y$, we define 
    \[\atomicset{Y} := \left\{x \in \mathbb{R}\colon\mu^{Y}(\{x\}) > 0\right\}.\] 
    Note that $\atomicset{Y}$ is at most countable, hence measurable. We can therefore define the quantities $\mu^{Y}_{\text{disc.}}$, $\mu^{Y}_{\text{cont.}}$ such that    
    \[
        \mu^{Y}_{\text{disc.}} := \mu^{Y}|_{\atomicset{Y}}  \quad \text{and} \quad \mu^{S}_{\text{cont.}} := \mu^{Y}|_{\atomicset{Y}^{c}} \quad \text{so that} \quad \mu^{Y} = \mu^{Y}_{\text{disc.}} + \mu^{Y}_{\text{cont.}}.
    \]
For a set $A \subseteq \mathbb{R}$, and $c \in \mathbb{R}$, we also use the notation $cA := \{cx\colon x \in A\}$. 

\begin{proof}[Proof of Lemma~\ref{lem:no-atom}]
We first have the following claim
\begin{clm} \label{clm:atomic-bound}
    For each $k \in \mathbb{N}$ we have 
    \begin{equation} \label{eq:clm-bound}
    \Prob{\sum_{i=1}^{\infty} S_{i} = 0} = \Prob{\sum_{i=1}^{\infty} S_{i} = 0, S_{k} \in \atomicset{S_k}} = \Prob{\sum_{i=1}^{\infty} S_{i} = 0, |S_{k}| \in \atomicset{|S_k|}}.
    \end{equation}
\end{clm}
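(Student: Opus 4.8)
The plan is to fix $k \in \mathbb{N}$ and isolate the $k$-th summand. Set $T_{k} := \sum_{i \neq k} S_{i}$; since $\sum_{i=1}^{\infty} S_{i}$ converges almost surely and $S_{k}$ is almost surely finite, $T_{k}$ is a well-defined, almost surely finite random variable, and by mutual independence of the $S_{i}$ it is independent of $S_{k}$ (note $T_{k}$ need not be symmetric, but this plays no role). With this notation $\left\{\sum_{i=1}^{\infty} S_{i} = 0\right\} = \left\{S_{k} = -T_{k}\right\}$, so both claimed identities are statements about the event $\{S_k = -T_k\}$.

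\textbf{First equality.} I would show the complementary event $\left\{S_{k} = -T_{k}\right\} \cap \left\{S_{k} \notin \atomicset{S_{k}}\right\}$ is null. Using independence of $S_{k}$ and $T_{k}$ together with Tonelli's theorem,
\begin{equation*}
\Prob{S_{k} = -T_{k},\ S_{k} \notin \atomicset{S_{k}}} = \int_{\mathbb{R}} \mu^{S_{k}}\bigl(\{-t\} \setminus \atomicset{S_{k}}\bigr)\, \mu^{T_{k}}(\dd t).
\end{equation*}
For every $t \in \mathbb{R}$ the set $\{-t\} \setminus \atomicset{S_{k}}$ is either empty (when $-t$ is an atom of $S_{k}$) or equals $\{-t\}$ with $\mu^{S_{k}}(\{-t\}) = 0$ (when it is not), so the integrand is identically $0$ and the integral vanishes. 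Hence $\Prob{\sum_{i} S_{i} = 0} = \Prob{\sum_{i} S_{i} = 0,\ S_{k} \in \atomicset{S_{k}}}$.

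\textbf{Second equality.} Here I would invoke the symmetry of $S_{k}$. For $y > 0$ one has $\mu^{|S_{k}|}(\{y\}) = \mu^{S_{k}}(\{y\}) + \mu^{S_{k}}(\{-y\}) = 2\,\mu^{S_{k}}(\{y\})$, using $S_k \sim -S_k$ for the last step, and likewise $\mu^{S_k}(\{y\}) = \mu^{S_k}(\{-y\})$; thus for any $\omega$ with $S_{k}(\omega) \neq 0$ one has $S_{k}(\omega) \in \atomicset{S_{k}}$ if and only if $|S_{k}(\omega)| \in \atomicset{|S_{k}|}$. Consequently the events $\left\{S_{k} \in \atomicset{S_{k}}\right\}$ and $\left\{|S_{k}| \in \atomicset{|S_{k}|}\right\}$ differ only by a subset of $\left\{S_{k} = 0\right\}$, which has probability zero by hypothesis. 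Intersecting each with $\left\{\sum_{i} S_{i} = 0\right\}$ therefore gives equal probabilities, completing the chain of equalities in~\eqref{eq:clm-bound}.

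All the steps are routine; the only place demanding any care is the measure-theoretic bookkeeping in the first step --- confirming that $T_{k}$ is almost surely well-defined and independent of $S_{k}$ and justifying the Tonelli exchange --- and keeping the relation between $\mu^{S_{k}}$ and $\mu^{|S_{k}|}$ straight in the second. I do not expect a genuine obstacle here; the substance of Lemma~\ref{lem:no-atom} lies in the argument that follows this claim, where the many atoms forced on $\sum_i S_i$ are shown to be incompatible with it being a probability law.
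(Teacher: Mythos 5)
Your proof is correct. For the first equality you and the paper both disintegrate the joint law using independence, but you condition in the opposite direction: the paper conditions on $S_{k}$, writing the offending probability as $\int \Prob{\sum_{i\neq k} S_{i} = -x}\, \dd\contpart{S_{k}}(x)$, and then argues by contradiction that positivity would force $\sum_{i\neq k} S_{i}$ to have uncountably many atoms; you instead condition on $T_{k}=\sum_{i\neq k}S_{i}$, writing the probability as $\int \mu^{S_{k}}\bigl(\{-t\}\setminus\atomicset{S_{k}}\bigr)\,\mu^{T_{k}}(\dd t)$, and observe the integrand vanishes pointwise by the very definition of $\atomicset{S_{k}}$. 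Your version is direct rather than by contradiction and does not need to invoke the countability of atoms or the fact that an atomless measure kills countable sets, so it is marginally more elementary. For the second equality the paper simply asserts the identity $\Prob{\sum_i S_i=0,\ |S_k|\notin\atomicset{|S_k|}}=\Prob{\sum_i S_i=0,\ S_k\notin\atomicset{S_k}}$; your spelling-out via $\mu^{|S_{k}|}(\{y\})=2\mu^{S_{k}}(\{y\})$ for $y>0$ is the right justification, and in fact, as you note, the two events coincide exactly on $\{S_k\neq 0\}$, with $\{S_k=0\}$ null by hypothesis.
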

Claim~\ref{clm:atomic-bound} readily implies that for each $k \in \mathbb{N}$
\[
\Prob{\sum_{i=1}^{\infty} S_{i} = 0} \leq \Prob{|S_{k}| \in \atomicset{|S_k|}} = \atomicpart{S_{k}}(\mathbb{R}), 
\]
hence implies  Lemma~\ref{lem:no-atom} if $\inf_{k \in \mathbb{N}} \atomicpart{S_{k}}(\mathbb{R}) = 0$. Therefore, assume, in order to obtain a contradiction, that $\Prob{\sum_{i=1}^{\infty} S_{i} = 0} > 0$, and, for simplicity of notation, set  
\[
p_{1} := \Prob{\sum_{i=1}^{\infty} S_{i} = 0}, \quad p_{2} := \inf_{k \in \mathbb{N}} \atomicpart{S_{k}}(\mathbb{R}), \quad \text{ so that} \quad 0 < p_1 \leq p_2. 
\]
\begin{clm} \label{clm:descending-sequence}
Suppose $p_1 > 0$. Then for any $\eps \in (0, p_1)$, there exists sequences $(c_{k})_{k \in \mathbb{N}} \in [0, \infty)^{\mathbb{N}}$,  $(n_{k})_{k \in \mathbb{N}} \in \mathbb{N}^{\mathbb{N}}$ with
\[
c_1 > c_2 > \cdots > 0, \quad n_{1} < n_2 < n_3 < \cdots,  \quad \lim_{k \to \infty} n_{k} = \infty; 
\]
such that, for each $k \in \mathbb{N}$ we have 
    \begin{equation} \label{eq:atom-bound-1}
    \atomicpart{|S_{k}|}((c_{k+1}, c_{k}]) > \atomicpart{|S_{k}|}([0, \infty)) - p_1 + \eps.
    \end{equation}
\end{clm}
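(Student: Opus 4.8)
The plan is to prove the claim by a single interleaved (``diagonal'') induction that builds the decreasing thresholds $(c_k)$ and the increasing indices $(n_k)$ simultaneously; I read the random variable in~\eqref{eq:atom-bound-1} as $S_{n_k}$ (the member of the subsequence just produced), since that is the version the construction naturally establishes. Write $\eta := p_1 - \eps$, which is strictly positive precisely because $\eps \in (0,p_1)$ — this is the only place the hypothesis $p_1>0$ enters. Since $[0,c_{k+1}]$, $(c_{k+1},c_k]$ and $(c_k,\infty)$ partition $[0,\infty)$ and $|S_j|\ge 0$ a.s., inequality~\eqref{eq:atom-bound-1} is equivalent to
\[
\atomicpart{|S_{n_k}|}([0,c_{k+1}]) + \atomicpart{|S_{n_k}|}((c_k,\infty)) < \eta .
\]
By Claim~\ref{clm:atomic-bound} the right-hand side of~\eqref{eq:atom-bound-1} is at least $\eps$, so the claim really does say that the single interval $(c_{k+1},c_k]$ traps all but less than $\eta$ of the (finite) atomic mass of $|S_{n_k}|$. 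It therefore suffices to make each of the two terms on the left above smaller than $\eta/2$.

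Two soft facts drive the construction, and the remaining hypotheses enter here. \emph{Escape to $0$:} since $\sum_j S_j$ converges almost surely, the summands satisfy $S_j\to 0$ almost surely, hence in probability, so for each fixed $\delta>0$ we have $\atomicpart{|S_j|}((\delta,\infty))\le \Prob{|S_j|>\delta}\to 0$ as $j\to\infty$. \emph{No atom at $0$:} since $\Prob{S_j=0}=0$, the finite measure $\atomicpart{|S_j|}$ gives no mass to $\{0\}$, so by continuity from above $\atomicpart{|S_j|}([0,c])\downarrow 0$ as $c\downarrow 0$, for each fixed $j$. I would then build the sequences recursively. Set $c_1:=1$ (any positive value will do). Using escape to $0$ at $\delta=c_1$, pick $n_1$ with $\atomicpart{|S_{n_1}|}((c_1,\infty))<\eta/2$; then, using no atom at $0$ for the now-fixed index $n_1$, pick $c_2\in(0,c_1)$ with $\atomicpart{|S_{n_1}|}([0,c_2])<\eta/2$. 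Inductively, given $c_1>\dots>c_k$ and $n_1<\dots<n_{k-1}$: first pick $n_k>n_{k-1}$ (escape to $0$ at $\delta=c_k$) with $\atomicpart{|S_{n_k}|}((c_k,\infty))<\eta/2$, and then pick $c_{k+1}\in(0,c_k)$ (no atom at $0$ for $|S_{n_k}|$) with $\atomicpart{|S_{n_k}|}([0,c_{k+1}])<\eta/2$.

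By construction $c_1>c_2>\dots>0$ and $n_1<n_2<\dots$, so $n_k\to\infty$, and for every $k$,
\[
\atomicpart{|S_{n_k}|}((c_{k+1},c_k]) = \atomicpart{|S_{n_k}|}([0,\infty)) - \atomicpart{|S_{n_k}|}([0,c_{k+1}]) - \atomicpart{|S_{n_k}|}((c_k,\infty)) > \atomicpart{|S_{n_k}|}([0,\infty)) - \eta,
\]
which is exactly~\eqref{eq:atom-bound-1}. The main thing to get right is the \emph{order of the choices}: $n_k$ must be chosen after $c_k$ is fixed (so that ``escape to $0$'' applies at the threshold $c_k$) and $c_{k+1}$ after $n_k$ is fixed (so that ``no atom at $0$'' applies to the specific variable $|S_{n_k}|$); and one must notice that the atomic mass of \emph{every} $S_j$ cannot be forced into a fixed nested family of intervals, only along a subsequence — which is why $(n_k)$ appears. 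I do not expect this claim itself to be the hard part of Lemma~\ref{lem:no-atom}: the real work is its subsequent use, where the disjointness of the intervals $(c_{k+1},c_k]$ together with the sign-symmetry of the $S_j$ (flipping the sign of $S_{n_k}$ on an atom $c$ moves a portion of the atom of $\sum_j S_j$ at $0$ over to $\pm 2c$) will be leveraged to produce more atomic mass in the law of $\sum_j S_j$ than a probability measure can carry, contradicting $p_1>0$.
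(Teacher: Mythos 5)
Your proposal is correct and follows essentially the same interleaved induction as the paper: choose $n_k$ after $c_k$ using the fact that convergence of $\sum_j S_j$ forces $\Prob{|S_j|>c_k}\to 0$, and choose $c_{k+1}$ after $n_k$ using $\Prob{S_{n_k}=0}=0$, splitting the error $p_1-\eps$ into two halves. You also correctly resolved the indexing in~\eqref{eq:atom-bound-1} as referring to $S_{n_k}$, which is how the paper uses the claim; the only cosmetic difference is at the base case, where the paper fixes $n_1=1$ and chooses $c_1$ large, while you fix $c_1=1$ and choose $n_1$ — both work.
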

For $\eps > 0$ and sequences $(c_{k})_{k \in \mathbb{N}} \in [0, \infty)^{\mathbb{N}}$,  $(n_{k})_{k \in \mathbb{N}} \in \mathbb{N}^{\mathbb{N}}$ satisfying the conditions of Claim~\ref{clm:descending-sequence}, we first observe that, for each $n_{k}$ we have 
\begin{equation} \label{eq:lower-bound-atoms}
\Prob{\sum_{i=1}^{\infty} S_{i} =0, S_{{n_k}} \in \atomicset{|S_{{n_k}}|} \cap (c_{k+1}, c_{k}]} \geq \eps. 
\end{equation}
Indeed, if not, note that~\eqref{eq:atom-bound-1} implies that $\atomicpart{|S_{{n_k}}|}((c_{k+1}, c_{k}]^c) < p_1 - \eps$. Therefore,  by Claim~\ref{clm:atomic-bound},
\[
p_1 = \Prob{\sum_{i=1}^{\infty} S_{i} =0, S_{{n_k}} \in \atomicset{|S_{{n_k}}|} \cap (c_{k+1}, c_{k}]} + \Prob{\sum_{i=1}^{\infty} S_{i} =0, S_{{n_k}} \in \atomicset{|S_{{n_k}}|} \cap (c_{k+1}, c_{k}]^c} < \eps + p_1 - \eps,
\]
a contradiction. %Now, for the proof of Lemma~\ref{lem:no-atom}, suppose that 
%\[\Prob{\sum_{i=1}^{\infty} S_{i} = 0} > 0.  \]
Therefore, assume~\eqref{eq:lower-bound-atoms}. For simplicity of notation, set $P_{k} := \atomicset{|S_{{n_k}}|} \cap (c_{k+1}, c_{k}]$. Note now that, for each $k \in \mathbb{N}$, exploiting the symmetry of the random variables $(S_{k})_{k \in \mathbb{N}}$, we have
\begin{linenomath}
    \begin{align*}
       \eps \leq  \Prob{\sum_{i=1}^{\infty} S_{i} = 0, |S_{n_{k}}| \in P_{k}} & = \sum_{x \in P_{k}} \Prob{\sum_{i=1}^{\infty} S_{i} = 0, |S_{n_{k}}| = x}
        \\ & = 2 \sum_{x \in P_{k}} \Prob{\sum_{i\neq n_{k}} S_{i} = - x, S_{n_{k}} = x}
        \\ & = 2 \sum_{x \in P_{k}} \Prob{\sum_{i\neq n_{k}} S_{i} = - x, S_{n_{k}} = -x}
        \\ & = \Prob{ \sum_{i=1}^{\infty} S_{i} \in \left(-2P_{k} \cup 2P_{k}\right)}.
    \end{align*}
\end{linenomath}
Now, since the sets $((c_{k+1}, c_{k}])_{k \in \mathbb{N}}$ are disjoint, the sets $\left(-2P_{k} \cup 2P_{k}\right)_{k \in \mathbb{N}}$, are also disjoint. Hence, by $\sigma$-additivity, we have 
\[
\Prob{\sum_{i=1}^{\infty} S_{i} \in \mathbb{R}} \geq \sum_{k=1}^{\infty} \Prob{ \sum_{i=1}^{\infty} S_{i} \in \left(-2P_{k} \cup 2P_{k}\right)} \geq \sum_{k=1}^{\infty} \eps = \infty, 
\]
a contradiction. 
\end{proof}
\begin{proof}[Proof of Claim~\ref{clm:atomic-bound}]
For a given $S_{k}$, suppose, in order to obtain a contradiction, that 
\[
\Prob{\sum_{i=1}^{\infty} S_{i} =0, |S_{k}| \in \atomicset{|S_k|}^c} = \Prob{\sum_{i=1}^{\infty} S_{i} =0, S_{k} \in \atomicset{S_k}^c} > 0.
\] 
Then by conditioning on the value of $S_{k}$ (using the associated regular conditional probability measure), and exploiting the mutual independence of the random variables $(S_{i})_{i \in \mathbb{N}}$, 
\begin{linenomath}
    \begin{align} \label{eq:uncount-atoms}
        \Prob{\sum_{i=1}^{\infty} S_{i} =0, S_{k} \in \atomicset{S_k}^c} = \E{\Prob{\sum_{i=1}^{\infty} S_{i} = 0 \, \bigg | \, S_{k}} \mathbf{1}_{S_{k} \in \atomicset{S_k}^c}} = \int_{B} \Prob{\sum_{i \neq k}S_{i} = -x} \dd \contpart{S_{k}}(x) > 0. 
    \end{align}
\end{linenomath}
The latter equation implies that 
\[
\contpart{S_{k}}\left(\left\{x \in B \colon\Prob{\sum_{i \neq k} S_{i} = -x} > 0\right\}\right) > 0. 
    \]
    Since, for each $y \in \mathbb{R}$, we have $\contpart{S_{k}}(\{y\}) = 0$, this implies that the set $\left\{x \in B \colon\Prob{\sum_{i \neq k} S_{i} = -x} > 0 \right\}$ is uncountable; a contradiction because the law describing a random variable cannot have uncountably many atoms. Claim~\ref{clm:atomic-bound} then follows.
\begin{comment}
    By conditioning on a given $S_{i}$ (using the associated regular conditional probability measure), we may write
    \begin{linenomath}
    \begin{align} \label{eq:atomic-bound}
\nonumber        \Prob{\sum_{i=1}^{\infty} S_{i} = 0} & = \E{\Prob{\sum_{i=1}^{\infty} S_{i} = 0 \, \bigg | \, S_{k}}} = \E{\Prob{\sum_{i \neq k} S_{i} = - S_{k} \, \bigg | \, S_{k}}}
        \\ \nonumber & = \int \Prob{\sum_{i\neq k} S_{i} = - x} \dd \meas{S_{k}}(x)
        \\ & = \int \Prob{\sum_{i\neq k} S_{i} = - x} \dd \contpart{S_{k}}(x) + \int \Prob{\sum_{i\neq k} S_{i} = - x} \dd \atomicpart{S_{k}}(x)
    \end{align}
    \end{linenomath}
    We argue that the first term on the right-hand side of~\eqref{eq:atomic-bound} is $0$. Indeed, if not, we must have 
    \[
        \contpart{S_{k}}\left(\left\{x \in \mathbb{R} \colon\Prob{\sum_{i \neq k} S_{i} = -x} > 0\right\}\right) > 0. 
    \]
    Since, for each $y \in \mathbb{R}$, we have $\contpart{S_{k}}(\{y\}) = 0$, this implies that the set $\left\{x \in \mathbb{R} \colon\Prob{\sum_{i \neq k} S_{i} = -x} > 0 \right\}$ is uncountable; a contradiction because the law describing a random variable cannot have uncountably many atoms. Claim~\ref{clm:atomic-bound} then follows by bounding the remaining term in~\eqref{eq:atomic-bound}:
    \[\int \Prob{\sum_{i\neq k} S_{i} = - x} \dd \atomicpart{S_{k}}(x) \leq \int \dd \atomicpart{S_{k}}(x) = \atomicpart{S_{k}}(\mathbb{R}).\]
\end{comment}
\end{proof}  

\begin{proof}[Proof of Claim~\ref{clm:descending-sequence}]
Let $\eps \in (0, p_1)$ be given. We construct the sequences $(c_{k})_{k \in \mathbb{N}}, (n_{k})_{k \in \mathbb{N}}$ inductively. For the base case, we set $n_1 :=1$, and suppose $\delta := p_1 - \eps$. Then by the monotone convergence theorem,  there exists some $c_1$ sufficiently large, such that 
\begin{equation} \label{eq:mon-bound}
\atomicpart{|S_1|}((0, c_1]) > \atomicpart{|S_1|}([0, \infty)) - \frac{\delta}{2},
\end{equation}
and similarly, since $\Prob{S_1 = 0} = 0$, by the dominated convergence theorem, there exists $c_2 < c_1$ sufficiently small that 
\begin{equation} \label{eq:dom-bound}
    \atomicpart{|S_1|}((0, c_2]) < \frac{\delta}{2}.
\end{equation}
Equations~\eqref{eq:mon-bound} and~\eqref{eq:dom-bound} together imply that $\atomicpart{|S_1|}((c_2, c_1]) > \atomicpart{|S_1|}([0, \infty)) - \delta$. Now, suppose we have constructed sequences $c_1 > c_2 > \cdots c_{k+1}$, $n_1 < n_2 < \cdots n_{k}$ satisfying the requirements of the claim. For the inductive step, note that since $\sum_{i=1}^{\infty} S_{i} < \infty$, we have $\lim_{n \to \infty} \Prob{|S_{n}| > c_{k+1}} = 0$. In particular, if we define 
\[
n_{k+1} := \inf \left\{j > n_{k}\colon\Prob{|S_{j}| > c_{k+1}} < \frac{\delta}{2}\right\}, \quad \text{we have} \quad n_{k+1} < \infty.  
\]
 By definition, this implies that $\atomicpart{\left|S_{{n_{k+1}}}\right|}((0, c_{k+1}]) > \atomicpart{\left|S_{{n_{k+1}}}\right|}([0, \infty)) - \frac{\delta}{2} \geq p_{2} - \frac{\delta}{2}$. Next, to choose $c_{k+2}$, similar to the base case, we fix $n_{k+1}$ and choose $c_{k+2}$ sufficiently small that $\atomicpart{\left|S_{n_{k+1}}\right|}((0, c_{k+2}]) < \frac{\delta}{2}$.   
\end{proof}

\subsection{Proof of Proposition~\ref{prop:fluctuating-sum}} \label{sec:series-2}
For Proposition~\ref{prop:fluctuating-sum}, we use the following generalisation of the martingale convergence theorem, for martingales with bounded increments.
\begin{thm}[See, e.g.,~{\cite[Theorem~4.3.1, page~194]{durrett}}] \label{thm:bounded-martingale-divergence}
Let $(Y_{i})_{i \in \mathbb{N}}$ be a martingale sequence, such that, for some $C > 0$, for all $k \in \mathbb{N}$
\[
\left|Y_{k+1} - Y_{k} \right| \leq C \quad \text{almost surely.}
\]
Let $\mathcal{E}_{1} : = \left\{\lim_{n \to \infty} Y_{n} \text{ exists and is finite} \right\}$, and $\mathcal{E}_{2} := \left\{\limsup_{n \to \infty} Y_{n} = \infty\right\} \cap \left\{\liminf_{n \to \infty} Y_{n} = -\infty\right\}$. Then $\Prob{\mathcal{E}_1 \cup \mathcal{E}_2} = 1$. 
\end{thm}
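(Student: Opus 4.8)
The plan is to localize the martingale with a one-sided stopping time and then invoke the classical martingale convergence theorem for $L^{1}$-bounded martingales, using the bounded-increment hypothesis precisely to control the overshoot at the stopping time. It suffices to establish the inclusion $\{\liminf_{n \to \infty} Y_{n} > -\infty\} \subseteq \mathcal{E}_{1}$ up to a null set. Granting this, one applies the same statement to the martingale $(-Y_{n})_{n \in \mathbb{N}}$, which has increments bounded by the same constant $C$; since $\liminf_{n}(-Y_{n}) = -\limsup_{n} Y_{n}$ and the event ``$\lim(-Y_{n})$ exists and is finite'' coincides with $\mathcal{E}_{1}$, this yields $\{\limsup_{n} Y_{n} < \infty\} \subseteq \mathcal{E}_{1}$ up to a null set. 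Then, because
\[
\Omega \setminus \big( \{\liminf_{n} Y_{n} > -\infty\} \cup \{\limsup_{n} Y_{n} < \infty\} \big) = \{\liminf_{n} Y_{n} = -\infty\} \cap \{\limsup_{n} Y_{n} = \infty\} = \mathcal{E}_{2},
\]
we obtain $\Omega \subseteq \mathcal{E}_{1} \cup \mathcal{E}_{2}$ up to a null set, i.e., $\Prob{\mathcal{E}_{1} \cup \mathcal{E}_{2}} = 1$.

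To establish $\{\liminf_{n} Y_{n} > -\infty\} \subseteq \mathcal{E}_{1}$ up to a null set, fix $M \in \mathbb{N}$ and set $T_{M} := \inf\{ n \in \mathbb{N}\colon Y_{n} \leq -M \}$, which is a stopping time. The stopped process $(Y_{n \wedge T_{M}})_{n \in \mathbb{N}}$ is again a martingale, with $\E{Y_{n \wedge T_{M}}} = \E{Y_{1}}$ for every $n$ by optional stopping, and the key point is that $\sup_{n} \E{ (Y_{n \wedge T_{M}})^{-} } < \infty$. Indeed, on $\{T_{M} > n\}$ we have $Y_{n \wedge T_{M}} = Y_{n} > -M$; on $\{2 \leq T_{M} \leq n\}$ minimality of $T_{M}$ forces $Y_{T_{M}-1} > -M$, so the bounded-increment hypothesis gives $Y_{n \wedge T_{M}} = Y_{T_{M}} \geq Y_{T_{M}-1} - C > -M-C$; and on $\{T_{M} = 1\}$ we have $Y_{n \wedge T_{M}} = Y_{1}$, with $\E{Y_{1}^{-}} \leq \E{|Y_{1}|} < \infty$. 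Adding these contributions gives $\E{ (Y_{n \wedge T_{M}})^{-} } \leq 2M + C + \E{|Y_{1}|}$ uniformly in $n$, so $(Y_{n \wedge T_{M}})_{n}$ is $L^{1}$-bounded and converges almost surely to a finite limit by the martingale convergence theorem. On $\{T_{M} = \infty\}$ one has $Y_{n \wedge T_{M}} = Y_{n}$ for every $n$, hence $\lim_{n} Y_{n}$ exists and is finite almost surely on $\{T_{M} = \infty\}$. Finally, $\bigcup_{M \in \mathbb{N}} \{T_{M} = \infty\} \supseteq \{\liminf_{n} Y_{n} > -\infty\}$ almost surely, because each $Y_{n}$ is almost surely finite, so that $\liminf_{n} Y_{n} > -\infty$ forces $\inf_{n} Y_{n} > -\infty$; this gives the desired inclusion.

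The crux---and essentially the only place the hypothesis $|Y_{k+1}-Y_{k}| \leq C$ is used---is the overshoot estimate $Y_{T_{M}} \geq -M-C$ on $\{2 \leq T_{M} < \infty\}$: without a bound on the increments the stopped martingale need be neither bounded below nor $L^{1}$-bounded, and the almost sure convergence step breaks down. Everything else is routine bookkeeping: that $T_{M}$ is a stopping time and $(Y_{n \wedge T_{M}})_{n}$ a martingale with constant mean (optional stopping for the bounded stopping time $n \wedge T_{M}$), the extended-real-valued De Morgan identity invoked in the first paragraph, and the harmless interchange between the events $\{Y_{n} > -M \text{ for all } n\}$ and $\{Y_{n} \geq -M \text{ for all } n\}$ when taking the union over $M$.
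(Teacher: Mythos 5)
Your proof is correct. Note that the paper does not prove this statement at all—it is quoted as a known result with a citation to Durrett—so there is no internal proof to compare against; your argument (stop the martingale at the first time it drops below $-M$, use the bounded-increment hypothesis to control the overshoot so that the stopped martingale has uniformly bounded negative part, apply the a.s.\ martingale convergence theorem, take the union over $M$, and then apply the result to $(-Y_n)$) is precisely the standard textbook proof, essentially the one given in the cited source.
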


\begin{proof}[Proof of Proposition~\ref{prop:fluctuating-sum}]    
\begin{comment}
For Item~\ref{item:lem-convergence} of the lemma, first note that the statement that $\sum_{j=1}^{\infty} S_{j} < \infty$ follows immediately from the Kolmogorov three series theorem. For the other statement, without loss of generality, it suffice to show that
    $\Prob{\sum_{j=1}^{\infty}(S_{j} - S'_{j}) = 0} = 0.$
    This follows from the following claim
    \begin{clm}
        Suppose $(S_{j})_{j \in \mathbb{N}}$ is a sequence of mutually independent random variables  with each $S_{j} \sim -S_{j}$. Then, if either $\inf_{k \in \mathbb{N}} \Prob{S_{k} = 0} = 0$ or $\liminf_{k \to \infty} \Prob{S_{k} \neq 0} > 0$ 
        \[
        \Prob{\sum_{k=1}^{\infty} S_{k} = 0} = 0. 
        \]
    \end{clm}
    suppose that $C$ is the constant satisfying Assumption~\ref{ass:three-series}.  We then have
\begin{linenomath*}
    \begin{align*}
    \E{S_{j} \mathbf{1}_{\left\{\left|S_{j}\right| \leq C\right\}}} &= \E{S_{j} \mathbf{1}_{\left\{0 < S_{j} < C\right\}}} + \E{S_{j} \mathbf{1}_{\left\{0 > S_{j} > - C\right\}}} 
    \\ & = \E{S_{j} \mathbf{1}_{\left\{0 < S_{j} < C\right\}}} - \E{-S_{j} \mathbf{1}_{\left\{0 < -S_{j} < C\right\}}} = 0, 
    \end{align*}
\end{linenomath*}
where the last equality follows from the fact that $S_{j} \sim -S_{j}$. Combining this fact with the other conditions of Assumption~\ref{ass:three-series}, Item~\ref{item:lem-convergence} follows from the Kolmogorov three series theorem. 
For Item~\ref{item:lem-divergence}, 
\end{comment}
First note that the event $\left\{\limsup_{n \to \infty} \sum_{j=1}^{n} S_{j} = \infty \right\}$ is a tail event with respect to the sequence of independent random variables $(S_{j})_{j \in \mathbb{N}}$, hence by the Kolmogorov $0$-$1$ law occurs with probability $0$ or $1$. Note also that
\[
\left\{\limsup_{n \to \infty} \sum_{j=1}^{n} S_{j} = \infty \right\} = \left\{\liminf_{n \to \infty} \sum_{j=1}^{n} -S_{j} = -\infty \right\}, 
\]
and hence, if the left side occurs with probability $1$, so does the right. Since $\sum_{j=1}^{n} -S_{j} \sim \sum_{j=1}^{n} S_{j}$, we deduce that the events
\begin{linenomath}
\begin{align} \label{eq:union-same-int}
& \nonumber \left\{\limsup_{n \to \infty} \sum_{j=1}^{n} S_{j} = \infty \right\} \cap \left\{\liminf_{n \to \infty} \sum_{j=1}^{n} S_{j} = -\infty \right\} \text{ and } 
\\ & \hspace{3cm} \left\{\limsup_{n \to \infty} \sum_{j=1}^{n} S_{j} = \infty \right\} \cup \left\{\liminf_{n \to \infty} \sum_{j=1}^{n} S_{j} = -\infty \right\}
\end{align}
\end{linenomath}
coincide up to null sets. We now consider the various cases under which the Kolmogorov three series theorem is not satisfied. Suppose first, that Equation~\eqref{eq:kolmogorov-three-series} is not satisfied by having, for each $C > 0$,  
\[
\sum_{j=1}^{\infty} \Prob{|S_{j}| > C} = \infty. 
\]
Then by the Borel-Cantelli lemma, and taking a countable intersection of almost sure events, we have
\begin{equation} \label{eq:unbounded-jumps}
\Prob{\bigcap_{C \in \mathbb{N}} \left\{ |S_j| > C \, \text{ i.o.} \right\}} = 1. 
\end{equation}
As a result, for each fixed $a, b \in \mathbb{Z}$ we have 
\[
\Prob{\limsup_{n \to \infty} S_{j} \leq a, \liminf_{n \to \infty} \sum_{j=1}^{n} S_{j} \geq b} = 0. 
\]
Indeed, if, with positive probability, $\limsup_{n \to \infty} \sum_{j=1}^{n} S_{j} \leq a$  and $\liminf_{n \to \infty} \sum_{j=1}^{n} S_{j} \geq b$, then with positive probability, there exists some $N \in \mathbb{N}$ such that, for all $n \geq N$, $\sum_{j=1}^{n} S_{j} \geq b-1$ and $\sum_{j=1}^{n} S_{j} \leq a +1$. But, by~\eqref{eq:unbounded-jumps}, $|S_{j}| > |a| + |b| + 2$ for infinitely many $j$ almost surely, so this cannot be the case. Therefore,  by a union bound
\begin{linenomath*}
\begin{align*}
\Prob{\limsup_{n \to \infty} \sum_{j=1}^{n} S_{j} < \infty, \liminf_{n \to \infty} \sum_{j=1}^{n} S_{j} > -\infty} & = \Prob{\bigcup_{a,b \in \mathbb{Z}} \left\{\limsup_{n \to \infty} \sum_{j=1}^{n} S_{j} \leq a, \liminf_{n \to \infty} \sum_{j=1}^{n} S_{j} \geq b \right\}} \\ & \leq \sum_{a, b \in \mathbb{Z}} \Prob{\limsup_{n \to \infty} \sum_{j=1}^{n} S_{j} \leq a, \liminf_{n \to \infty} \sum_{j=1}^{n} S_{j} \geq b} = 0. 
\end{align*}
\end{linenomath*}
Therefore $\Prob{ \left\{\limsup_{n \to \infty} \sum_{j=1}^{n} S_{j} = \infty \right\} \cup \left\{\liminf_{n \to \infty} \sum_{j=1}^{n} S_{j} = -\infty \right\}} = 1$, and Equation~\eqref{eq:varying-sum} follows from~\eqref{eq:union-same-int}. 

The other case where Equation~\eqref{eq:kolmogorov-three-series} is not satisfied is when there exists $C > 0$ such that 
\[
\sum_{j=1}^{\infty} \Prob{|S_{j}| > C} < \infty, \text{ but } \sum_{j=1}^{\infty} \E{S_{j}^{2} \mathbf{1}_{|S_{j}| \leq C}} = \infty. 
\]
Now, by Borel-Cantelli, there are only finitely many terms $S_{j}$ such that $|S_{j}| > C$, and since $|S_{j}| < \infty$ almost surely, these terms make only a finite contribution to $\limsup_{n \to \infty} \sum_{j=1}^{n} S_{j}$, and likewise, $\liminf_{n \to \infty} \sum_{j=1}^{n} S_{j}$. Therefore, it suffices to show that 
\[
\Prob{\limsup_{n \to \infty} \sum_{j=1}^{n} S_{j} \mathbf{1}_{|S_{j}| \leq C} = \infty, \liminf_{n \to \infty} \sum_{j=1}^{n} S_{j}\mathbf{1}_{|S_{j}| \leq C} = -\infty} = 1. 
\]
But now, if we set $M_{n} := \sum_{j=1}^{n} S_{j} \mathbf{1}_{|S_{j}| \leq C}$, $(M_{n})_{n \in \mathbb{N}}$ is a martingale sequence, with $|M_{n} - M_{n-1}| \leq C$. Therefore, by Theorem~\ref{thm:bounded-martingale-divergence}
\[
\Prob{ \left\{\lim_{n \to \infty} M_{n} \text{ exists and is finite }\right\} \cup \left\{\limsup_{n \to \infty} M_{n} = \infty, \liminf_{n \to \infty} M_{n} = -\infty \right\}} = 1. 
\]
Since by~\eqref{eq:union-same-int}, both events in the probability occur with probability $0$ or $1$, we need only show that the series 
\[
\sum_{j=1}^{\infty} S_j \mathbf{1}_{|S_{j}| \leq C}  \quad \text{ does not converge almost surely.}
\]
But now, since $\sum_{j=1}^{\infty} \E{S_{j}^{2} \mathbf{1}_{|S_{j}| \leq C}} = \infty$ this follows from the converse direction of the Kolmogorov three series theorem. 
\end{proof}

\section*{Acknowledgements}
Thanks to Wolfgang K\"onig for helpful feedback on a draft of this manuscript. The author is funded by Deutsche Forschungsgemeinschaft (DFG) through DFG Project $\# 443759178$. 

\bibliographystyle{abbrv}
\bibliography{refs}

\end{document}